\newenvironment{figurehere}
  {\def\@captype{figure}}
  {}
\newtheorem{theorem}{Theorem}[section]
\newtheorem{lemma}[theorem]{Lemma}
\newtheorem{corollary}[theorem]{Corollary}
\newtheorem{proposition}[theorem]{Proposition}
\theoremstyle{definition}
\newtheorem{definition}[theorem]{Definition}
\newtheorem{example}[theorem]{Example}
\newtheorem{xca}[theorem]{Exercise}
\theoremstyle{remark}
\newtheorem{remark}[theorem]{Remark}
\numberwithin{equation}{section}
\newcommand{\abs}[1]{\lvert#1\rvert}
\newcommand{\blankbox}[2]{%
  \parbox{\columnwidth}{\centering
%    Set fboxsep to 0 so that the actual size of the box will match the
%    given measurements more closely.
    \setlength{\fboxsep}{0pt}%
    \fbox{\raisebox{0pt}[#2]{\hspace{#1}}}%
  }%
}
\newcommand{\C}{\mathbb{C}}
\newcommand{\Z}{\mathbb{Z}}
\newcommand{\Q}{\mathbb{Q}}
\newcommand{\R}{\mathbb{R}}
\newcommand{\N}{\mathbb{N}}
\def\Hol{\mathrm{Hol}}
\def\hol{\mathrm{hol}}
\def\tr{\mathrm{Tr}}
\def\H{\mathbb{H}}
\def\Teich{\mathcal{T}}
\def\pa{\partial}
\def\Pr{\mathbb{P}}
\def\k{\kappa}
\def\rar{\rightarrow}
\def\hra{\hookrightarrow}
\def\a{\alpha}
\def\Af{\mathfrak{A}}
\def\b{\beta}
\def\Ao{\Af^\circ}
\def\A8{\Af_{\infty}}
\def\Yy{\mathcal{Y}}
\def\La{\Lambda}
\def\i{\iota}
\def\g{\gamma}
\def\Mod{\mathrm{Mod}}
\def\Bb{\mathfrak{B}}
\def\d{\delta}
\def\lra{\longrightarrow}
\def\s{\sigma}
\def\e{\varepsilon}
\def\ol#1{\overline{#1}}
\def\wh#1{\widehat{#1}}
\def\ti#1{\tilde{#1}}
\def\wti#1{\widetilde{#1}}
\def\Rep{\mathcal{R}}
\def\psl{\mathfrak{sl}}
\def\Pr{\mathbb{P}}
\def\Qq{\mathcal{Q}}
\def\ora#1{\overrightarrow{#1}}
\def\ola#1{\overleftarrow{#1}}
\def\bold#1{\mbox{\boldmath$#1$}}
\def\ua{\bold{\a}}
\def\ub{\bold{\b}}
\def\th{\vartheta}
\def\dev{\mathrm{dev}}
\def\Pp{\mathcal{P}}
\def\Sch{\text{\boldmath$S$}}
\def\gr8{\mathrm{gr}_{\infty}}
\def\Met{\mathfrak{Met}}
\def\AMet{\mathfrak{AMet}}
\def\Diff{\mathrm{Diff}}
\def\Dd{\mathcal{D}}
\def\Dhat{\widehat{\Dd}}
\def\Conf{\mathfrak{Conf}}
\def\Yhat{\widehat{\Yy}}
\def\PGL{\mathrm{PGL}}
\def\PU{\mathrm{PU}}
\def\PSL{\mathrm{PSL}}
\def\P{\mathrm{P}}
\def\PGL{\mathrm{PGL}}
\def\qu{\backslash}
\def\SE{\mathrm{SE}}
\def\Arc{\mathfrak{Arc}}
\def\TA{\mathfrak{TA}}
\def\Bl{\mathrm{Bl}}
\def\val{\mathrm{val}}
\def\ad{\mathrm{ad}}
\def\Ad{\mathrm{Ad}}
\def\psl{\mathfrak{sl}}
\def\lieg{\mathfrak{g}}
\def\gru{\Gamma}
\begin{document}

\title{Poisson structures on the Teichm\"uller space
of hyperbolic surfaces with conical points}

%    Information for first author
\author{Gabriele Mondello}
%    Address of record for the research reported here
\address{Department of Mathematics, Imperial College of London,
South Kensington Campus, London SW7 2AZ, UK}
%    Current address
%\curraddr{Department of Mathematics and Statistics,
%Case Western Reserve University, Cleveland, Ohio 43403}
\email{g.mondello@imperial.ac.uk}
%    \thanks will become a 1st page footnote.
%\thanks{The first author was supported in part by NSF Grant \#000000.}

%    Information for second author
%\author{Author Two}
%\address{Mathematical Research Section, School of Mathematical Sciences,
%Australian National University, Canberra ACT 2601, Australia}
%\email{two@maths.univ.edu.au}
%\thanks{Support information for the second author.}

%    General info
\subjclass{Primary 53D30, 30F60; Secondary 32G15}
\date{November 30, 2008.}

%\dedicatory{This paper is dedicated to our advisors.}

\keywords{Hyperbolic surface, moduli space, representation variety,
Poisson structure}

\begin{abstract}
In this paper two Poisson structures on the moduli space of hyperbolic
surfaces with conical points are compared: the Weil-Petersson one
and the $\eta$ coming from the representation variety. We show that
they are multiple of each other, if the angles do not exceed $2\pi$.
Moreover, we exhibit an explicit formula for $\eta$ in terms
of hyperbolic lengths of a suitable system of arcs.
\end{abstract}

\maketitle

%%%%%%%%%%%%%%%%%%%%%%%%%%%%%%%%%%%%%%%%%%%%%%%%%%%%%%%%%%%%%%%%%%%%%%%%
\section{Introduction}
The uniformization theorem for hyperbolic surfaces of genus $g$
with conical points
(\cite{mcowen:point-singularity},
\cite{mcowen:prescribed} and \cite{troyanov:prescribing};
see Section~\ref{sec:admissible})
allows to identify the space $\Yy(S,x)(\th)$ of hyperbolic metrics
on $S$ (up to isotopy) with angles $\th=(\th_1,\dots,\th_n)$
at the marked points $x=(x_1,\dots,x_n)$
to the Teichm\"uller space $\Teich(S,x)$
(see Section~\ref{sec:spaces}).

It is thus possible
to define a Weil-Petersson pairing $h^*_{WP,\th}=
g^*_{WP,\th}+i\eta_{WP,\th}$ on the cotangent space of
$\Teich(S,x)$ at $J$ as
\[
h^*_{WP,\th}(\varphi,\psi):=-\frac{1}{4}\int_S
g^{-1}_{\th}(\varphi,\ol{\psi})
\]
where $\varphi,\psi \in H^0(S,K_S^{\otimes 2}(x))
\cong T^*\Teich(S,x)$ are holomorphic with respect to $J$
and $g_{\th}$ is the area form
of the unique hyperbolic metric conformally equivalent
to $J$ and with angles $\th$.
In particular, $h^*_{WP,0}$ is the standard Weil-Petersson
dual Hermitian form.

As the angles $\th_j$ become larger (but still
satisfy the hyperbolicity constraint
$(2g-2+n)\pi>\th_1+\dots+\th_n$),
the situation ``deteriorates''.
In particular, if some $\th_k\geq \pi$,
no collar lemma for the conical points holds
(see Lemma~\ref{lemma:collar}).
Moreover, for some choice of the hyperbolic metric $g$ on $S$,
there can be no smooth geodesic $\hat{\g}\subset S\setminus x$
isotopic to a given loop $\g$ in $S\setminus x$.

As noticed in \cite{schumacher-trapani:cone}, $g_{WP,\th}$ becomes
smaller as $\th$ increases.
Moreover, as $\th_k$ approaches $2\pi$ from below,
the fibers of the forgetful map
$f_k:\Teich(S,x)\lra \Teich(S,x\setminus\{x_k\})$
(metrically) shrink and 
$h_{WP,\th}$ converges to $f_k^*(h_{WP,\th_{\hat{k}}})$, where
$\th_{\hat{k}}=(\th_1,\dots,\hat{\th}_k,\dots,\th_n)$.

So, for $\th\in[0,2\pi)^n$ the pairing $h_{WP,\th}$ defines
a K\"ahler metric \cite{schumacher-trapani:variation},
but it gets more and more degenerate whenever some $\th_k$
overcomes the ``walls'' $2\pi\N_+$.

From a different point of view, there is another interesting way to define
an alternate pairing on $\Teich(S,x)$. In fact, a choice of
$\th$ (such that no $\th_j$ is a positive multiple of $2\pi$)
permits to real-analytically
identify $\Teich(S,x)$ to the space of
Poincar\'e projective structures
(defined by requiring the developing map to be a local isometry)
inside the space of
all ``moderately singular'' projective structures $\Pp(S,x)$
(see Section~\ref{sec:projective}).
Moreover, an important
theorem of Luo \cite{luo:holonomy}
(which we reprove in a different way)
asserts that, if $\th_k\notin 2\pi\N_+$ for all $1\leq k\leq n$, then
the holonomy map $\Pp(S,x)\lra \Rep(\pi_1(S\setminus x),\PSL_2(\C))
=\mathrm{Hom}(\pi_1(S\setminus x),\PSL_2(\C))/\PSL_2(\C)$
is a real-analytic local diffeomorphism.

Our first results, described more extensively in
Theorem~\ref{thm:holonomy}, Proposition~\ref{prop:injectivity}
and Proposition~\ref{prop:integral},
can be summarized in the following.

\begin{theorem}
Let $\La_-:=\{\th\in \R_{\geq 0}^n\,|\,
\th_1+\dots+\th_n<2\pi(2g-2+n)\}$
and $\La_-^\circ:=\La_-\cap (\R_{\geq 0}\setminus 2\pi\N_+)^n$. Then:
\begin{itemize}
\item[(a)]
the holonomy map $\Teich(S,x)\times \La_-^\circ
\cong\Yy(S,x)(\La_-^\circ)\lra \Rep(\pi_1(S\setminus x),\PSL_2(\R))$
is a real-analytic local diffeomorphism;
\item[(b)]
the restriction of the holonomy map
to $\{\th\in \La_-\,|\, \th_j\leq \pi\ \forall j\}$
is injective;
\item[(c)]
if $\th_i,\th_j>\pi$ (for $i\neq j$), then
the holonomy map $\Teich(S,x)\cong\Yy(S,x)(\th)\lra
\Rep(\pi_1(S\setminus x),\PSL_2(\R))$ is not injective.
\end{itemize}
\end{theorem}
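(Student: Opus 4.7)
My plan is to attack the three assertions separately, each with its own key ingredient.

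For part (a), I would combine the uniformization identification $\Yy(S,x)(\La_-^\circ)\cong\Teich(S,x)\times\La_-^\circ$ with the theorem (reproved in Section~\ref{sec:projective}, going back to Luo) that the holonomy map on the space $\Pp(S,x)$ of moderately singular projective structures is a real-analytic local diffeomorphism onto the complex character variety, valid once $\th_k\notin 2\pi\N_+$ for all $k$. Restricting on the source to the Poincar\'e (i.e., Fuchsian) locus, which is precisely the image of $\Yy(S,x)(\th)$, and on the target to the real Fuchsian component of $\Rep(\pi_1(S\setminus x),\PSL_2(\R))$, both sides are real-analytic submanifolds of the common real dimension $6g-6+2n$, so the restricted holonomy map inherits the local-diffeomorphism property from the ambient one. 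The only cohomological input that needs to be checked is that a Fuchsian $\rho$ admits no nontrivial first-order deformation with prescribed elliptic angles killed by conjugation, i.e.\ a vanishing statement for a constrained piece of $H^1(\pi_1(S\setminus x);\psl_2(\R)_{\Ad\rho})$, which follows from discreteness, faithfulness and type-preservation of $\rho$.

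For part (b), I would exploit the bound $\th_j\leq\pi$, which via Lemma~\ref{lemma:collar} ensures embedded collars around the conical points and hence the existence of a unique $g_\th$-geodesic representative in each free homotopy class of $\pi_1(S\setminus x)$. Given $g,g'\in\Yy(S,x)(\th)$ with common holonomy $\rho$, the lengths of these unique representatives coincide on the two metrics with the translation lengths of $\rho$, so the marked length spectra of $g$ and $g'$ agree. The $\rho$-equivariant developing maps $\dev,\dev':\wti{S\setminus x}\to\H^2$ therefore send the endpoints of any lifted geodesic representative to the same fixed-point pair of $\rho(\g)$, and since fixed points of hyperbolic elements of $\rho(\pi_1(S\setminus x))$ are dense in $\partial\H^2$, equivariance forces $\dev=\dev'$, hence $g\simeq g'$ up to isotopy.

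For part (c), which I expect to be the main obstacle, the goal is to exhibit an explicit nontrivial deformation of a chosen $(J,g)\in\Yy(S,x)(\th)$ with $\th_i,\th_j>\pi$ that preserves the holonomy $\rho$. Pick a simple geodesic arc $\alpha$ from $x_i$ to $x_j$ in $(S,g)$; since both $\th_i$ and $\th_j$ exceed $\pi$, each endpoint of $\alpha$ carries an angular wedge of opening greater than $\pi/2$ on each of its two sides, leaving room for a one-parameter family of ``bigon moves'' along $\alpha$. Concretely, one cuts $S$ along $\alpha$, inserts a thin hyperbolic bigon with vertices at $x_i,x_j$ whose angles are matched to the original angular distribution at the cone points, and reglues; because the modification is supported in a disk cut out along an arc in $S$ (rather than along a loop in $S\setminus x$), the induced holonomy on $\pi_1(S\setminus x)$ is untouched, while the length of any loop transverse to $\alpha$ is altered, yielding a genuinely new point of $\Yy(S,x)(\th)$. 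The technical core will be to verify that the bigon insertion can be arranged to preserve each cone angle exactly: it is precisely here that the hypothesis $\th_i,\th_j>\pi$ at \emph{both} endpoints of $\alpha$ (rather than at only one) is essential, because the required angular slack must be present simultaneously at $x_i$ and $x_j$ in order for the reglued metric to close up without introducing extra singularities.
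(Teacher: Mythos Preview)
Your approach to part (a) is essentially the paper's: invoke Luo's local-diffeomorphism theorem for the holonomy of moderately singular projective structures and restrict to the admissible locus. The paper adds an independent proof of the immersion statement via $a$-length coordinates attached to an adapted triangulation (Proposition~\ref{prop:coord-holonomy}), but your outline is fine.

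For part (b) your route diverges from the paper and has a genuine difficulty. The paper argues directly via Fenchel--Nielsen reconstruction: when $\th_j\le\pi$ the collar lemma guarantees smooth simple closed geodesics, one chooses a pants decomposition (allowing the degenerate pants that arise when two angles equal $\pi$), and recovers both lengths and twists from traces of the holonomy. Your argument instead tries to match the two developing maps by using density of fixed points of hyperbolic elements in $\partial\H$. The problem is that for generic cone angles the holonomy representation is \emph{not} discrete and the developing map is far from being a covering of $\H$; in particular there is no canonical equivariant boundary extension of $\dev$ along which you could compare the two maps, and the usual limit-set density arguments do not apply. The marked length spectrum heuristic is correct (lengths of closed geodesics are traces), but turning that into injectivity without passing through something like pants coordinates is exactly the missing step.

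Part (c) has a real gap. Your bigon surgery claims simultaneously that the holonomy of every loop is preserved and that ``the length of any loop transverse to $\alpha$ is altered''; these are incompatible, since the length of a closed geodesic is determined by the trace of its holonomy. More concretely, cutting along an arc $\alpha$ and inserting any nondegenerate hyperbolic region changes the holonomy of every loop crossing $\alpha$, because in the developed picture the two sides of each lift of $\alpha$ are displaced relative to one another by a nontrivial isometry. A disk neighbourhood of $\alpha$ in $S$ is \emph{not} simply connected in $\dot S$, so ``supported in a disk'' does not shield the holonomy. The paper's argument for (c) is entirely different and worth comparing: since $\th_i+\th_j>2\pi$ one can choose a metric with $x_i,x_j$ very close, compute that the loop $\b$ encircling them has elliptic holonomy with $|\tr\hol(\b)|<2$, and adjust the distance so that $\hol(\b)$ has finite order $q$; then $\tau_\b^q$ acts trivially on the representation variety but freely on $\Teich(S,x)$. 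This produces two genuinely different marked surfaces with identical holonomy without any local surgery on the metric.
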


The local behavior around $g$ of the holonomy map can be studied
using special coordinates (the $a$-lengths), namely the hyperbolic lengths
of a maximal system of arcs $\ua$ (which are simple, non-homotopic,
non-intersecting unoriented paths between pairs of points in $x$)
adapted to $g$ (see Section~\ref{sec:arcs}).
Actually, if the angles are smaller than $\pi$,
the $a$-lengths allow to reconstruct
the full geometry of the surface, so that we can obtain also the injectivity.
The existence of adapted triangulations is not obvious if the angles are not
small and it is a consequence of the Voronoi decomposition of $(S,x)$
(see Section~\ref{sec:voronoi}).
We remark that, as $\th\rar 0$, the Voronoi decomposition and the associated
(reduced) $a$-lengths extend to the space of decorated hyperbolic
surfaces with cusps (see Section~\ref{sec:decorated}),
thus recovering Penner's lambda lengths \cite{penner:decorated}.

Back to the previous alternate pairings,
the representation space $\Rep(\pi_1(S\setminus x),\PSL_2(\R))$
is naturally endowed with a Poisson structure $\eta$ at its smooth points
induced by the Lefschetz duality on $(S,x)$ and a $\PSL_2(\R)$-invariant
nondegenerate symmetric bilinear product on $\psl_2(\R)$
(see Section~\ref{sec:poisson}).

Thus,
we can compare $\eta_{WP,\th}$ with the pull-back of $\eta$
via the holonomy map,
whenever the angles do not belong to $2\pi\N$. Adapting the work of
Goldman \cite{goldman:symplectic},
we prove that the Shimura isomorphism
holds for angles smaller than $2\pi$.

\begin{theorem}
If $\th\in \La_-\cap(0,2\pi)^n$, then
\[
\eta_{WP,\th}=\frac{1}{8} \eta\Big|_{\th}
\]
as dual symplectic forms on $\Yy(S,x)(\th)\cong\Teich(S,x)$.
\end{theorem}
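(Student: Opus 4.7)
The plan is to adapt Goldman's cohomological identification of the Weil--Petersson symplectic form with the cup-product form on the character variety (originally for closed surfaces) to the setting of hyperbolic surfaces with cone angles in $(0,2\pi)$. Both symplectic structures in the statement are \emph{dual}, i.e., pairings on the cotangent bundle; at $[g]$ the cotangent space is identified with $H^0(S,K_S^{\otimes 2}(x))$, the space of quadratic differentials with at most simple poles at $x$, and the content of the theorem is that these two pairings agree up to the universal factor $\frac{1}{8}$.

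First I would associate to each cotangent vector $\varphi\in H^0(S,K_S^{\otimes 2}(x))$ a $\psl_2(\R)$-valued harmonic 1-form $\omega_\varphi$ on $S\setminus x$, following Bers--Ahlfors deformation theory but with the conical metric $g_\theta$ replacing the smooth hyperbolic one. Concretely, $\omega_\varphi$ encodes the infinitesimal variation of the holonomy representation along the Weil--Petersson-harmonic Beltrami deformation $\bar\varphi/\rho^2$ dual to $\varphi$, where $\rho^2$ is the area density of $g_\theta$. The bound $\theta_j<2\pi$ ensures that $\omega_\varphi$ has the appropriate integrability near $x_j$, and that its cohomology class lies in the variant of $H^1(\pi_1(S\setminus x),\ad\rho)$ used to define $\eta$ via Lefschetz duality on $(S,x)$.

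Second, I would write the pullback of $\eta|_\theta$ as
\[
\eta(\varphi,\psi)=\int_{S\setminus x}B(\omega_\varphi\wedge\omega_\psi),
\]
with $B$ the $\PSL_2(\R)$-invariant trace form on $\psl_2(\R)$. Using the explicit decomposition of $\omega_\varphi$ into $(1,0)$- and $(0,1)$-parts built from $\varphi$ and $g_\theta$, a pointwise algebraic calculation on $\psl_2(\R)$ identifies the integrand with $8$ times the imaginary part of $-\frac{1}{4}g_\theta^{-1}(\varphi,\bar\psi)$, i.e.\ exactly $8\,\eta_{WP,\theta}(\varphi,\psi)$. Away from $x$ this step duplicates Goldman's closed-surface computation, and the constant $\frac{1}{8}$ emerges from the fixed normalization of $B$.

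The main obstacle is the analysis near the conical points. Three items have to be checked: (i) absolute convergence of $\int_S g_\theta^{-1}(\varphi,\bar\psi)$, which in conformal coordinates near $x_j$ reduces to integrability of $|z|^{-\theta_j/\pi}|dz|^2$ and holds precisely when $\theta_j<2\pi$; (ii) analogous convergence of the cup-product integral, together with agreement of $[\omega_\varphi]$ with the derivative of the holonomy map applied to the Weil--Petersson dual of $\varphi$; and (iii) vanishing of the boundary terms when Stokes is invoked on $S\setminus\bigsqcup_j B(x_j,\varepsilon)$ as $\varepsilon\to 0$. Once these analytic issues are handled, the remaining comparison is a pointwise identity on $S\setminus x$, identical to the smooth case. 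It is worth noting that the upper bound $\theta_j<2\pi$ is sharp: it is exactly at this threshold that the integrability of $g_\theta^{-1}(\varphi,\bar\psi)$ fails and that $h_{WP,\theta}$ begins to degenerate in the direction of the forgetful map $f_k$, matching the discussion in the introduction.
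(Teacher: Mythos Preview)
Your outline is correct and follows the same Goldman-inspired strategy as the paper. The execution in the paper differs mainly in how the local analysis near the cone points is packaged. Rather than working with harmonic representatives and checking boundary terms via Stokes on $S\setminus\bigsqcup_j B(x_j,\varepsilon)$, the paper writes down an explicit $\xi$-valued section
\[
\tau:=(d\,\dev)^{-1}\circ\dev^*(\Bb\sigma)\in H^0\bigl(S,T_S(\textstyle\sum_j(r_j-1)x_j)\otimes\xi\bigr),
\]
computes its Laurent expansion at each $x_j$, and observes that the $(E-F)$-component of $\tau$ is $-\frac{2iz_j}{r_j}\frac{\partial}{\partial z_j}$, hence vanishes to first order. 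Since $\varphi$ has at worst a simple pole, $\varphi\tau$ then has zero residue at each $x_j$ (for $\th_j\notin 2\pi\N$) and so is cohomologous to a form $\wti{\varphi\tau}$ supported away from $\dot D$, i.e.\ a genuine class in $H^1(\dot S,\dot D;\xi_{\C})$. This residue argument is the cohomological substitute for your item~(iii); it sidesteps any limit computation and makes transparent why the argument breaks at $\th_j\in 2\pi\N$. The pointwise identity $g_\th^{-1}=-(i/2)\Bb(\tau\wedge\bar\tau)$ is then exactly your ``pointwise algebraic calculation on $\psl_2(\R)$'', and the chain
\[
h^*_{WP,\th}=-\tfrac14\int_S g_\th^{-1}(\varphi,\bar\psi)
=\tfrac{i}{8}\int_S\Bb(\varphi\tau\wedge\ol{\psi\tau})
=\tfrac{i}{8}[S]\cap\Bb(p^*\varphi\cup\ol{p^*\psi})
\]
produces the factor $\tfrac18$. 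So the two routes are equivalent; the paper's is more algebraic and avoids the analytic bookkeeping you flag in (i)--(iii), at the cost of the explicit local computation of $\tau$.
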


Clearly, we could not ask the equality to hold for larger angles
$\th\in\La_-^\circ$,
as $\eta_{WP,\th}$ becomes degenerate, whilst $\eta\Big|_{\th}$ is
not. However, in proving the theorem we obtain the following.

\begin{corollary}
If $\th\in\La_-$, then
\[
\eta_{WP,\th}(\varphi,\psi)=\frac{1}{8}\eta\Big|_{\th}(\varphi,\psi)
\]
for $\varphi,\psi\in T^*\Teich(S,x)$ whenever
both hand-sides converge
(the right-hand side is always finite if $\th_j\notin 2\pi\N_+$
for all $j$).
\end{corollary}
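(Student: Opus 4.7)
The plan is to deduce the corollary from the preceding theorem by real-analytic continuation in the angle parameter $\th$. Fix $J\in\Teich(S,x)$ and a pair $\varphi,\psi\in H^0(S,K_S^{\otimes 2}(x))\cong T^*_J\Teich(S,x)$, and introduce
\[
F_1(\th):=\eta_{WP,\th}(\varphi,\psi),\qquad F_2(\th):=\tfrac{1}{8}\,\eta\Big|_{\th}(\varphi,\psi),
\]
viewed as real-valued functions on the open subsets of $\La_-$ where each expression is meaningful.

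First I would establish that $F_2$ is real-analytic on the whole of $\La_-^\circ$, which already gives the parenthetical finiteness assertion. The real-analytic identification $\Yy(S,x)(\La_-^\circ)\cong\Teich(S,x)\times\La_-^\circ$ together with part (a) of Theorem~\ref{thm:holonomy} realises the holonomy as a real-analytic local diffeomorphism into $\Rep(\pi_1(S\setminus x),\PSL_2(\R))$; pulling back the fixed Poisson tensor $\eta$ and evaluating it on the (locally constant) cotangent vectors $\varphi,\psi$ yields a real-analytic function of $\th\in\La_-^\circ$.

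Next I would show that $F_1$ is real-analytic on the open subset $U\subset\La_-$ where the defining integral converges absolutely. The conformal factor of $g_{\th}$ with respect to a fixed background metric depends real-analytically on $\th$ by the uniformization theorem recalled in Section~\ref{sec:admissible}, and near each marked point $x_j$ it has the standard behaviour $|z|^{\th_j/\pi-2}$ in a conformal coordinate $z$; combining this with the at-most-simple-pole condition on $\varphi,\psi$ at $x_j$ produces, on a small polydisc neighbourhood of any $\th_0\in U$, an integrable majorant that is locally uniform in $\th$, thereby justifying differentiation under the integral sign and giving the real-analyticity of $F_1$.

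Finally, the preceding theorem asserts $F_1=F_2$ on the open set $\La_-\cap(0,2\pi)^n\subset U\cap\La_-^\circ$; since $F_2$ is real-analytic on the connected open set $\La_-^\circ$ and $F_1$ is real-analytic on $U$, uniqueness of real-analytic continuation propagates the identity $F_1=F_2$ to every connected component of $U\cap\La_-^\circ$ that meets the reference region $\La_-\cap(0,2\pi)^n$. I expect the principal obstacle to lie in the dominated-convergence step for $F_1$: as some $\th_j$ crosses a threshold of the form $2\pi k$, the matching between the singular exponent of $g_{\th}$ at $x_j$ and the maximal pole order allowed for $\varphi,\psi$ becomes delicate, and one must additionally confirm that the connected component of $U$ under consideration indeed reaches back to the strip $(0,2\pi)^n$ in order for the identity to propagate there.
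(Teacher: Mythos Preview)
Your route via real-analytic continuation in $\th$ is genuinely different from the paper's, and it runs into a real obstacle that you yourself flag at the end but do not resolve. The set $\La_-^\circ=\La_-\cap(\R_{\geq 0}\setminus 2\pi\N_+)^n$ is \emph{disconnected}: it is sliced by the hyperplanes $\th_j=2\pi k$. Your function $F_2$ is only defined on $\La_-^\circ$, so even if $F_1$ happens to be real-analytic on a connected open set $U$ that crosses a wall, analytic continuation of $F_1$ on $U$ does not force $F_1=F_2$ on the far component of $U\cap\La_-^\circ$; for that you would need to know independently that $F_2$ extends real-analytically across the wall, and nothing in your argument supplies this. Thus your proof, as written, only covers those $\th$ lying in the chamber of $\La_-^\circ$ that contains $(0,2\pi)^n$, which is strictly weaker than the corollary.

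The paper avoids this difficulty entirely. Its proof of the preceding theorem is not a black box that merely outputs the identity on $(0,2\pi)^n$; rather, it establishes a chain of \emph{pointwise} equalities of integrands and cohomological pairings,
\[
-\tfrac14\!\int_S g_\th^{-1}(\varphi,\ol{\psi})
=\tfrac{i}{8}\!\int_S \Bb(\varphi\tau\wedge\ol{\psi\tau})
=\tfrac{i}{8}\!\int_S \Bb(\wti{\varphi\tau}\wedge\ol{\wti{\psi\tau}})
=\tfrac{i}{8}\,[S]\cap\Bb\big(p^*(\varphi)\cup\ol{p^*(\psi)}\big),
\]
valid for every $\th\in\La_-$ at which all the displayed terms converge (the construction of $\tau$ and of the cohomologous representatives $\wti{\varphi\tau},\wti{\psi\tau}$ uses only that $\th_j\notin 2\pi\N_+$). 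The corollary is therefore read off directly from the proof, with no continuation argument needed; the hypothesis $\th\in(0,2\pi)^n$ in the theorem serves only to guarantee convergence a priori, not to make the identities hold.
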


Finally, in Section~\ref{sec:explicit}
we find an explicit formula for $\eta$ in terms of
the $a$-length coordinates.

\begin{theorem}
Let $\ua$ be a triangulation of $(S,x)$ adapted to
$g\in\Yy(S,x)(\La_-^\circ)$ and let $a_k=\ell_{\a_k}$.
Then the Poisson structure $\eta$ at $g$ can be expressed
in term of the $a$-lengths as follows
\[
\eta_g=
\sum_{h=1}^n
\sum_{\substack{\mathrm{s}(\ora{\a_i})=x_h \\ 
\mathrm{s}(\ora{\a_j})=x_h}}
\frac{\sin(\th_h/2-d(\ora{\a_i},\ora{\a_j}))}{\sin(\th_h/2)}
\frac{\pa}{\pa a_i}\wedge
\frac{\pa}{\pa a_j}
\]
where $s(\ora{\a}_k)$ is the starting point of the oriented
arc $\ora{\a}_k$ and $d(\ora{\a_i},\ora{\a_j})$ is the angle
spanned by rotating the tangent vector to the
oriented geodesic $\ora{\hat{\a}_i}$ at its starting point
clockwise to the tangent vector at the starting point of
$\ora{\hat{\a}_j}$.
\end{theorem}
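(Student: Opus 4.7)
The approach is to compute the Poisson bivector $\eta_g$ entry-by-entry in the $a$-length coordinates: it suffices to evaluate the pairings $\{a_i,a_j\}_\eta:=\eta|_g(da_i,da_j)$ for every pair of arcs $\a_i,\a_j$ of the triangulation $\ua$ and then reassemble the coefficients. Viewing $a_k=\ell_{\a_k}$ as a function on $\Rep(\pi_1(S\setminus x),\PSL_2(\R))$ via the holonomy map, its differential $da_k$ admits a canonical representative as an $\ad$-valued cocycle which, by an arc version of Wolpert's duality lemma, is supported in an arbitrarily small tubular neighborhood of the geodesic $\hat{\a}_k$, with correction terms concentrated at the two conical endpoints (where the local holonomy is forced to remain a rotation of angle $\th_h$). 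Since $\eta$ is defined via Lefschetz duality paired with the invariant form on $\psl_2(\R)$, the bracket $\{a_i,a_j\}_\eta$ is an integral over $S\setminus x$; because the arcs of $\ua$ are internally disjoint, only contributions at their common conical endpoints survive.

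The bulk of the work is then the local computation in a sector of angle $\th_h$ developed in $\H^2$ around each shared cone point $x_h$. The oriented arcs $\ora{\a_i},\ora{\a_j}$ lift to geodesic rays from a common basepoint with clockwise angular separation $d(\ora{\a_i},\ora{\a_j})$. The cone-rigidity constraint (the infinitesimal deformation of the representation must preserve the rotation by $\th_h$) pins down the dual twist cocycles near $x_h$ as specific elements of $\psl_2(\R)$, and pairing them via the invariant form using the sine law in $\psl_2(\R)$ produces the factor $\sin(\th_h/2-d(\ora{\a_i},\ora{\a_j}))/\sin(\th_h/2)$: the denominator records the cone-rigidity normalization (which blows up on the walls $\th_h\in 2\pi\N_+$, matching the degeneracy of the holonomy there), and the numerator captures the angular dependence. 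Summing the local contributions over both orientations of each arc and over all conical points $h=1,\dots,n$, and observing that $\sin(\th_h/2-(\th_h-d))=-\sin(\th_h/2-d)$ supplies the required antisymmetry under $i\leftrightarrow j$, yields the claimed formula.

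The main obstacle is this local hyperbolic calculation at the cone point: unlike Wolpert's classical formula for transverse intersections of closed geodesics (which gives $\cos\th_p$ at a smooth crossing), the arcs here meet tangentially at a cone singularity, and the $\ad$-valued cocycles representing $da_i$ and $da_j$ are not uniquely pinned down near $x_h$ until the cone-rigidity condition is solved. A clean way to handle this is to work on the universal cover of a small punctured disc around $x_h$ and use that the germ of a deformation preserving the cone angle lifts to a twist in $\psl_2(\R)$ orthogonal (with respect to the invariant bilinear form) to the infinitesimal rotation by $\th_h$; inverting the resulting linear system is what ultimately produces the $1/\sin(\th_h/2)$ factor, after which the numerator $\sin(\th_h/2-d)$ is a direct trigonometric consequence and the global formula follows by summation.
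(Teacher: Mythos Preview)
Your outline is essentially the paper's approach: compute $\eta_g(da_i,da_j)$ via Goldman's technique on the representation variety, localize to the shared conical endpoints, and extract the trigonometric coefficient from the $\psl_2(\R)$-pairing. The execution in the paper, however, differs from your sketch in a way worth flagging. Rather than a de~Rham-style cocycle supported in a tubular neighbourhood of $\hat\a_k$ together with an abstract ``cone-rigidity'' correction at $x_h$, the paper works in group (co)homology via the small boundary loops $\g(\ora{\a_k}),\g(\ola{\a_k})$ winding around the two endpoints: setting $S_k=\rho(\g(\ora{\a_k}))$, $F_k=\rho(\g(\ola{\a_k}))$, $s_k=\log S_k$, $f_k=\log F_k$, it differentiates $\cosh a_k=-\tfrac12\Bb(L(S_k),L(F_k))$ and obtains an explicit $\xi$-valued $1$-cycle for $da_k$ whose essential piece is $(1-\Ad_{S_k}^{-1})^{-1}[f_k,s_k]\otimes\g(\ora{\a_k})$, plus the analogous term at the other end, plus multiples of $d\th$ which lie in the radical of $\eta$ and are discarded. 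The intersection $\g(\ora{\a_i})\cap\g(\ora{\a_j})$ at the level of $1$-chains contributes two signed points $y_1-y_2$; the difference of parallel transports between them is exactly $(1-\Ad_{S_j}^{-1})$, which cancels one inverse and leaves $\Bb\big((1-\Ad_{S_i}^{-1})^{-1}L(\ora{\a_i}),\,L(\ora{\a_j})\big)$. A short calculation with $\Ad_{S_i^h}$ acting as rotation by $h\th_h$ then yields $\sin(\th_h/2-d)/\sin(\th_h/2)$. So the denominator $\sin(\th_h/2)$ is not the output of solving a cone-rigidity linear system in your sense but is simply $(1-\Ad_S^{-1})^{-1}$ evaluated on vectors orthogonal to the rotation axis, arising from the logarithm-differentiation lemma. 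Your proposal correctly anticipates the structure of the answer but asserts the outcome of this computation rather than performing it; the actual work sits in that explicit cycle representative and the two-point intersection calculation.
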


The techniques are borrowed from Goldman \cite{goldman:hamiltonian}
and they could be adapted to treat surfaces with boundary or
surfaces with conical points and boundary. In fact, the formula
is manifestly the analytic continuation of its cousin in
\cite{mondello:WP}, obtained using techniques of Wolpert
\cite{wolpert:symplectic} and the doubling construction (unavalaible
here).

\subsection{Acknowledgements}
I would like to thank Martin M\"oller and Stefano Francaviglia
for fruitful discussions and an anonymous referee for useful remarks.

%%%%%%%%%%%%%%%%%%%%%%%%%%%%%%%%%%%%%%%%%%%%%%%%%%%%%%%%%%%%%%%%%%%%%%%%%
\section{Surfaces with constant nonpositive curvature}\label{sec:admissible}

\begin{definition}
A {\bf pointed surface} $(S,x)$ is a compact oriented surface $S$
of genus $g$ with a nonempty collection $x=(x_1,\dots,x_n)$
of $n$ distinct points on $S$. We will also write $\dot{S}$ for
the punctured surface $S\setminus x$.
\end{definition}
 
We will always assume that $n\geq 3$ if $g=0$.

Call $\La(S,x)$ the space of
{\bf $(S,x)$-admissible angle parameters},
made of $n$-tuples $\th=(\th_1,\dots,\th_n)\in\R_{\geq 0}^n$
such that
\[
\chi(\dot{S},\th):=(2-2g-n)+\sum_j\frac{\th_j}{2\pi}
\]
is nonpositive and we let $\La_-(S,x)$ (resp. $\La_0(S,x)$)
be the subset of admissible {\bf hyperbolic} (resp. {\bf flat})
angle parameters, namely those satisfying $\chi(\dot{S},\th)<0$
(resp. $\chi(\dot{S},\th)=0$).

We define $\La^\circ(S,x)=\La(S,x)\cap (\R\setminus 2\pi\N)^n$
and similarly $\La_0^\circ:=\La_0\cap \La^\circ$ and
$\La_-^\circ=\La_-\cap \La^\circ$.
Finally, $\La_{sm}(S,x):=\La(S,x)\cap[0,\pi)^n$ is the
subset of {\bf small} angle data.

\begin{definition}
An {\bf $\th$-admissible metric} $g$ on $(S,x)$
is a Riemannian metric of constant curvature on $\dot{S}$
such that, locally around $x_j$,
\[
g=\begin{cases}
f(z_j)|z_j|^{2r_j-2}|dz_j|^2 & \text{if $r_j>0$ or $\chi(\dot{S},\th)=0$} \\
f(z_j)|z_j|^{-2}\log^2|1/z_j|^2|dz_j|^2 & \text{if $r_j=0$ and
$\chi(\dot{S},\th)<0$} 
\end{cases}
\]
where $r_j=\th_j/2\pi$, $z_j$ is a local conformal coordinate at $x_j$
and $f$ is a smooth positive function.
A metric $g$ is {\bf admissible} if it is $\th$-admissible for
some $\th$.
\end{definition}

\begin{remark}
Notice that, if $\chi(\dot{S},\th)<0$ (or $\th\in\R_+^n$), then
such admissible metrics have finite area.
\end{remark}

Existence and uniqueness of metrics of nonpositive constant curvature
was proven by McOwen \cite{mcowen:point-singularity}
\cite{mcowen:prescribed} and Troyanov \cite{troyanov:euclidiennes}
\cite{troyanov:prescribing}.

\begin{theorem}[McOwen, Troyanov]\label{thm:metrics}
Given $(S,x)$ and an admissible $\th$ as above, there exists
a metric of constant curvature on $S$ and assigned
angles $\th$ at $x$ in each conformal class.
Such metric is unique up to rescaling.
\end{theorem}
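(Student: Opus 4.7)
The plan is to fix a conformal class $J$ on $S$ and recast the existence of a constant curvature metric with the prescribed conical angles as a prescribed-curvature PDE for the conformal factor, which will be solved by variational methods.

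First I would choose a background $\th$-admissible metric $g_0$ in the conformal class $J$ of the desired form near each $x_j$, i.e.\ $g_0=|z_j|^{2r_j-2}|dz_j|^2$ when $r_j>0$ and the logarithmic model when $r_j=0$ and $\chi(\dot{S},\th)<0$. A direct computation (distributional Gauss--Bonnet) gives $\int_{\dot S}K_{g_0}\,dA_{g_0}=2\pi\chi(\dot S,\th)$. Any conformally equivalent admissible metric can then be written as $g=e^{2u}g_0$ for some $u$ smooth on $\dot S$ and bounded near $x$, and its curvature is
\[
K_g=e^{-2u}\bigl(-\Delta_{g_0}u+K_{g_0}\bigr).
\]
Imposing $K_g\equiv K\in\{-1,0\}$ (the sign being forced by $\chi(\dot S,\th)$) thus reduces the problem to solving the semilinear elliptic equation
\[
-\Delta_{g_0}u+K_{g_0}=K\,e^{2u}
\]
on $\dot S$, with $u$ bounded on a punctured neighbourhood of each $x_j$.

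For existence in the hyperbolic case ($K=-1$), I would set up the functional
\[
E(u)=\frac{1}{2}\int_{\dot S}|\nabla u|_{g_0}^2\,dA_{g_0}+\int_{\dot S}K_{g_0}\,u\,dA_{g_0}+\frac{1}{2}\int_{\dot S}e^{2u}\,dA_{g_0}
\]
on the Sobolev space $H^1(S,g_0)$ of functions bounded near the $x_j$. Coercivity and boundedness from below of $E$ rely on a Moser--Trudinger inequality for the singular background $g_0$ (of Troyanov type), combined with the Gauss--Bonnet identity to control the linear term $\int K_{g_0}u$. A minimizing sequence is then bounded in $H^1$, weak lower semicontinuity of the quadratic and convex parts plus compactness of the Moser--Trudinger embedding give a minimizer $u_*$, which is a weak solution of the Euler--Lagrange equation. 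Standard elliptic bootstrapping on $\dot S$ yields smoothness away from $x$, and a local analysis of the equation near each conical point (comparing with the model $|z_j|^{2r_j-2}|dz_j|^2$) shows that $u_*$ is bounded there, so $g=e^{2u_*}g_0$ is genuinely $\th$-admissible. The flat case ($K=0$) is easier: the equation becomes the linear Poisson equation $-\Delta_{g_0}u=-K_{g_0}$, whose solvability is exactly the Gauss--Bonnet condition $\chi(\dot S,\th)=0$, and the solution is unique up to an additive constant, i.e.\ the metric is unique up to global rescaling.

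For uniqueness in the hyperbolic case, suppose $u_1,u_2$ are two solutions and set $v=u_1-u_2$. Subtracting the two equations gives
\[
-\Delta_{g_0}v=-\bigl(e^{2u_1}-e^{2u_2}\bigr).
\]
Multiplying by $v$, integrating over $\dot S$ (justified by the boundedness of $v$ near $x$ and the admissibility of both metrics, which make the boundary contributions at the punctures vanish), and using strict monotonicity of $t\mapsto e^{2t}$ yields
\[
\int_{\dot S}|\nabla v|_{g_0}^2\,dA_{g_0}=-\int_{\dot S}(e^{2u_1}-e^{2u_2})v\,dA_{g_0}\leq 0,
\]
forcing $v$ constant and then, by the equation, $v\equiv 0$. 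In the flat case the freedom in the additive constant precisely accounts for the ``up to rescaling'' clause.

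The main obstacle I expect is the analytic step concerning coercivity of $E$: one needs the sharp Moser--Trudinger inequality on the singular surface $(S,g_0)$, which is a variant of Trudinger's inequality with weights $|z_j|^{2r_j-2}$ and has to accommodate angles approaching (but not crossing) the hyperbolicity threshold $\chi(\dot S,\th)=0$. Once this inequality is in place, the rest of the argument is standard direct calculus of variations plus elliptic regularity, and the care required near each conical point is purely local.
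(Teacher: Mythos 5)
The paper does not actually prove this statement: Theorem~\ref{thm:metrics} is imported wholesale from McOwen and Troyanov, with only a citation. So there is no internal argument to compare yours against; the right benchmark is the cited literature, and your plan is a faithful outline of exactly the proof given there (Troyanov's variational argument: conformal factor $e^{2u}$ over a singular background cone metric, the Liouville equation $-\Delta_{g_0}u+K_{g_0}=Ke^{2u}$, direct minimization using a Trudinger-type inequality adapted to the conical background, elliptic regularity plus local analysis at the cone points, and uniqueness by monotonicity of $e^{2t}$). Two small refinements: (i) in the subcritical case $\chi(\dot S,\th)<0$ you do not need the \emph{sharp} singular Moser--Trudinger constant --- since the exponential term enters the functional with a plus sign, splitting $u$ into its mean and a mean-zero part and using Jensen's inequality together with Poincar\'e already gives coercivity, and the strict convexity of $E$ along the relevant paths gives uniqueness of the minimizer for free, making your separate integration-by-parts uniqueness argument a welcome double check rather than a necessity (one only needs that the punctures have zero capacity, so that bounded $H^1$ functions integrate by parts freely); (ii) the clause ``unique up to rescaling'' in the statement just reflects that the value of the negative constant is not normalized, which your argument recovers by fixing $K=-1$, while in the flat case it is genuinely the additive constant in the linear Poisson equation, as you say. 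With the singular Trudinger inequality taken as the one external input, your plan is a correct and complete skeleton of the standard proof.
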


Moreover, Schumacher-Trapani \cite{schumacher-trapani:cone} showed
that, for a fixed conformal structure on $S$, the restriction
to a compact subset $K\subset \dot{S}$ of the hyperbolic metric depends
smoothly on the associated admissible angle data,
provided $\th\in(0,2\pi)^n$.

%%%%%%%%%%%%%%%%%%%%%%%%%%%%%%%%%%%%%%%%%%%%%%%%%%%%%%%%%%%%%%%%%%%%%%%%%
\section{Spaces of admissible metrics}\label{sec:spaces}

Given a pointed surface $(S,x)$, consider the space of all
Riemannian metrics on $\dot{S}$, which is naturally an open convex
subset of a Fr\'echet space.
Let $\AMet(S,x)\subset\Met(S,x)$ be its
subspaces of admissible metrics and of metrics with conical singularities
at $x$. We will deliberately be sloppy about the regularity of such metrics.

The group $\Diff_+(S,x)$ of orientation-preserving diffeomorphisms
of $S$ that fix $x$ pointwise clearly acts on $\Met(S,x)$ preserving
$\AMet(S,x)$.

\begin{definition}
The {\bf Yamabe space} $\Yhat(S,x)$ is the quotient
$\AMet(S,x)/\Diff_0(S,x)$, where $\Diff_0(S,x)\subset
\Diff_+(S,x)$ is the subgroup of isotopies relative to $x$.
Moreover, $\Yy(S,x):=\Yhat(S,x)/\R_+$, where
$\R_+$ acts by rescaling.
\end{definition}

\begin{remark}
The definition above is clearly modelled on that of Teichm\"uller space
$\Teich(S,x)$, which is obtained as a quotient of the space of
conformal structures $\Conf(S,x)$ on $S$ by $\Diff_0(S,x)$.
\end{remark}

The {\bf mapping class group} $\Mod(S,x):=\Diff_+(S,x)/\Diff_0(S,x)$
acts on $\Yhat(S,x)$, on $\Yy(S,x)$ and on $\Teich(S,x)$.
%we call $\NPhat(S,x):=\Yhat(S,x)/\Mod(S,x)$ the {\bf moduli space}
%of surfaces with admissible metrics homeomorphic to $(S,x)$,
%$\NP(S,x):=\Yy(S,x)/\Mod(S,x)$ and
%and $\M(S,x):=\Teich(S,x)/\Mod(S,x)$ the moduli space of Riemann
%surfaces homeomorphic to $(S,x)$.

There are two natural forgetful maps. The former
$\mathfrak{F}:\AMet(S,x)\lra\Conf(S,x)$
only remembers the conformal structure and the latter
$\Theta':\AMet(S,x)\lra \La(S,x)$ remembers the angles
at the conical points $x$.
They induce $F:\Yy(S,x)\lra\Teich(S,x)$ and
$\Theta:\Yy(S,x)\lra\La(S,x)$ respectively.
If $A\subset\La(S,x)$, then we will denote
$\Theta^{-1}(A)\subset\Yy(S,x)$ by $\Yy(S,x)(A)$
for brevity.

\begin{remark}
The forgetful map $(\widetilde{\mathfrak{F}},
\widetilde{\Theta}'):\Met(S,x)\lra\Conf(S,x)
\times\La(S,x)$
can be given the structure of a fibration
in Fr\'echet or Banach spaces
(see for instance \cite{schumacher-trapani:variation}).
\end{remark}

Theorem~\ref{thm:metrics} says that the
restriction of $(\widetilde{\mathfrak{F}},
\widetilde{\Theta}')$ to $\AMet(S,x)$ is a homeomorphism
and so its inverse is a section. The following
result (due to Schumacher-Trapani)
investigates the regularity of this section
and uses techniques of implicit function theorem.
%
%The space $\AMet(S,x)$ can be given a natural smooth structure
%(for instance, see \cite{schumacher-trapani:variation})
%so that $\mathfrak{F}$ is smooth.
%Moreover, $\Yy(S,x)$ and $\Yhat(S,x)$ can be naturally
%given a smooth structure.
%

\begin{theorem}[\cite{schumacher-trapani:variation}]
The homeomorphism
$(\mathfrak{F},\Theta'):\AMet(S,x)\lra \Conf(S,x)\times \La(S,x)$
restricts to a principal $\R_+$-fibration
over $\Conf(S,x)\times(\La_-(S,x)\cap(0,2\pi)^n)$,
and so does $\Yhat(S,x)\lra \Teich(S,x)\times\La(S,x)$.
Hence, $(F,\Theta):\Yy(S,x)\lra\Teich(S,x)\times\La(S,x)$
restricts to a $\Mod(S,x)$-equivariant homeomorphism
over $\Teich(S,x)\times(\La_-(S,x)\cap(0,2\pi)^n)$.
\end{theorem}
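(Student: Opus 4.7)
Theorem~\ref{thm:metrics} already identifies each fibre of $(\mathfrak{F},\Theta')$ over $(J,\theta)\in\Conf(S,x)\times(\La_-\cap(0,2\pi)^n)$ with a single $\R_+$-orbit under rescaling, so the content of the theorem is a regularity statement: the inverse section assigning to $(J,\theta)$ the unique admissible metric of curvature $-1$ depends smoothly on its arguments. Once this is established upstairs on $\AMet(S,x)$, the statements for $\Yhat(S,x)$ and $\Yy(S,x)$ follow by quotienting, since $\Diff_0(S,x)$ acts smoothly, preserves both $\mathfrak{F}$ and $\Theta'$, and commutes with rescaling; $\Mod(S,x)$-equivariance is then automatic.

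The method is a standard implicit function theorem argument in weighted function spaces. Fix a base point $(J_0,\theta_0)\in\Conf(S,x)\times(\La_-\cap(0,2\pi)^n)$ and a reference metric $g_{\mathrm{ref}}$ on $\dot{S}$, conformal to $J_0$ and exhibiting the model asymptotic behaviour near $x$ prescribed by $\theta_0$. Nearby conformal structures can be pulled back to $J_0$ by a smooth family of diffeomorphisms supported away from $x$, so the problem of finding the admissible metric $g=e^{2u}g_{\mathrm{ref}}$ reduces to solving a Liouville-type equation
\[
\Delta_{g_{\mathrm{ref}}} u - e^{2u} = K_{g_{\mathrm{ref}}}
\]
on $\dot{S}$ with prescribed conical asymptotics at $x_j$ determined by $\theta_j$. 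The linearisation at the base solution $u_0$ has the form $\Delta_{g_0}-2$ (in the geometric sign convention), which on an appropriate scale of weighted Hölder (or Sobolev) spaces adapted to the conical asymptotics is Fredholm of index zero, and is an isomorphism by the maximum principle applied to the negative zero-order term. The implicit function theorem then produces a smooth family $u=u(J,\theta)$, hence a smooth section of $(\mathfrak{F},\Theta')$, which together with the free rescaling action gives the principal $\R_+$-fibration structure.

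The main obstacle — and the reason for the restriction $\theta_j<2\pi$ — is the choice of weight at each cone point. The indicial roots of $\Delta_{g_{\mathrm{ref}}}$ at $x_j$ sit at integer multiples of $2\pi/\theta_j$, and one must place the weight strictly between two consecutive indicial roots so that the Fredholm index vanishes and the implicit function step applies uniformly in a neighbourhood of $\theta_0$. For $\theta_j\in(0,2\pi)$ the weight can be chosen cleanly in a single ``sector'' and small perturbations of $\theta$ remain in that sector, but once $\theta_j$ crosses $2\pi$ a new indicial root enters the admissible window, the linearised operator acquires additional kernel/cokernel, and the simple fibration picture breaks down — in agreement with the degeneration of $h_{WP,\theta}$ at the walls $2\pi\N_+$ described in the introduction. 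Verifying the Fredholm property and the invertibility of the linearisation uniformly in $(J,\theta)$ on the chosen weighted spaces is the technical heart of the argument, and it is the step I expect to require the most care.
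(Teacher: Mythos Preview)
The paper does not supply its own proof of this statement: the theorem is quoted from \cite{schumacher-trapani:variation} and the only indication of method is the remark preceding it, that the result ``investigates the regularity of this section and uses techniques of implicit function theorem.'' Your sketch is therefore not competing with a proof in the paper but rather reconstructing the cited argument, and the overall strategy you describe --- reduce to a Liouville equation for the conformal factor, linearise to $\Delta_{g_0}-2$, invoke Fredholm theory in weighted spaces and the maximum principle, then apply the implicit function theorem --- is indeed the standard route and matches what the paper attributes to Schumacher--Trapani.

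One point worth flagging: your explanation of the $\theta_j<2\pi$ restriction via indicial roots of the Laplacian is a reasonable heuristic, but the actual mechanism in the Schumacher--Trapani analysis is somewhat more delicate, involving the precise choice of function spaces in which both the nonlinear map and its linearisation are simultaneously well-behaved as $\theta$ varies. The paper itself hints at this subtlety in the paragraph following the theorem, suggesting that ``a deeper inspection of their proof might show'' the result extends to all of $\La^\circ(S,x)$; so the barrier at $2\pi$ may be partly an artefact of the functional-analytic setup rather than a genuine obstruction of the kind you describe. This does not affect the correctness of your outline for the stated range, but you should be cautious about presenting the indicial-root picture as the definitive reason for the restriction.
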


A deeper inspection of their proof
might show that $(\mathfrak{F},\Theta')$
restricts to an $\R_+$-fibration over
$\Teich(S,x)\times \La^\circ(S,x)$.
In this case, if $\Yy(S,x)(\La^\circ(S,x))$ is given the smooth structure
coming from Theorem~\ref{thm:holonomy}(a), then $(F,\Theta)$
would restrict to a $\Mod(S,x)$-equivariant
diffeomorphism over $\Teich(S,x)\times\La^\circ(S,x)$.

%%%%%%%%%%%%%%%%%%%%%%%%%%%%%%%%%%%%%%%%%%%%%%%%%%%%%%%%%%%%%%%%
\section{Projective structures and holonomy}\label{sec:projective}

Let $h_{\k}=\k |dw|^2+|dz|^2$ be a Hermitian product on $\C^2$,
with $\k\leq 0$, and call $\PU_{\k}\subset \PSL_2(\C)$ the projective
unitary group associated to $h_{\k}$.

Given a pointed surface $(S,x)$, we denote by $\wti{\dot{S}}\rar\dot{S}$
its universal cover and by $\P T\dot{S}\rar\dot{S}$ and
$\P T\wti{\dot{S}}\rar\wti{\dot{S}}$
the bundles of real oriented tangent directions.
If $\dot{S}$ is endowed with a Riemannian metric, then $\P T\dot{S}$
identifies to the unit tangent bundle $T^1\dot{S}$.

Given an admissible metric $g$ on $(S,x)$ with angles $\th$
and curvature $\k$, one
can construct a {\bf developing map} so that the following diagram
\[
\xymatrix{
\P T\wti{\dot{S}} \ar[r] \ar[d] & \PU_{\k} \ar@{^(->}[r] \ar[d] &
\PGL_2(\C) \ar[d] \\
\wti{\dot{S}} \ar[r]^{\dev\quad} & D\qu \PU_{\k}
\ar@{^(->}[r] \ar[d]^{\cong} & B\qu \PGL_2(\C) \ar[d]^{\cong} \\
& \{v=w/z\in\C\,|\,|v|<1/\sqrt{|\k|}\} \ar@{^(->}[r] & \C\Pr^1
}
\]
commutes, where
$B\subset\PGL_2(\C)$ is the subset of upper triangular matrices
and $D=B\cap\PU_{\k}$. In fact,
the sphere $S_\k:=\{(w,z)\in\C^2\,|\,\k|w|^2+|z|^2=1\}$ is acted on
by $\mathrm{U}_\k$ transitively
and its projectivization $\Omega_k:=\Pr S_\k$ is still acted on
by $\PU_\k$. Hence, $\Omega_\k=D\qu\PU_{\k}$
comes endowed with a metric of curvature $\k$,
so that $\dev$ becomes a local isometry.

\begin{remark}
The group $\PU_\k$ preserves $h_\k$ and clearly all its nonzero
(real) multiples.
For $\k<0$, the couple $(\Omega_\k,\PU_\k)$ is isomorphic to
$(\Omega_{-1},\PU_{-1})$ and so to
$(\H,\PSL_2(\R))$. But $D\qu\PU_0=\{|z|=1\}$ and
$\k^{-1}h_\k\rar|dw|^2$ as $\k\rar 0$.
Hence,
$\Omega_0\cong
\{[w:z]\in\mathbb{CP}^1\,|\,z\neq 0\}\cong\C$ with
the Euclidean metric and
\[
\PU_0\cong\left\{\left(
\begin{array}{cc}
u & 0 \\
t & 1
\end{array} \right)\,\Big|\, u\in \mathrm{U}(1),\ t\in\C\right\}=
\{
v\mapsto uv+t\,|\,u\in\mathrm{U}(1),\ t\in\C
\}
\]
We conclude that $(\Omega_0,\PU_0)$ is isomorphic to
$(\R^2,\SE_2(\R))$, where $\SE_2(\R)$ is the group of affine isometries
of $\R^2$ that preserve the orientation.
\end{remark}

Let $\Pp(S,x)$ be the space of
{\bf moderately singular projective structures}
on $\dot{S}$ (up to isotopy), that is of those
whose Schwarzian derivative
with respect to the Poincar\'e structure corresponding to $\th=0$
has at worst double poles at $x$.
The fibration $p:\Pp(S,x)\lra\Teich(S,x)$
that only remembers the complex structure on $S$ is naturally
a principal bundle under the vector bundle
$\Qq(S,2x)\lra \Teich(S,x)$ of holomorphic quadratic differentials
(with respect to a conformal structure on $S$) with at worst double
poles at $x$.

We also call $\Pp_{con}(S,x)$ the space of
{\bf projective structures with conical points}, which
are defined to be those moderately singular projective structures
that satisfy the following condition: for every $j$ there exists
a local holomorphic coordinate around $x_j$ such that,
around $x_j=\{z_j=0\}$,
the universal covering map
$\wti{\dot{S}}\cong\H_{w_j}\rar\dot{S}$ can be written as
$w_j\mapsto \mathrm{exp}(iw_j)=z_j$ and the
developing map is conjugated to
$w_j\mapsto \mathrm{exp}(i r_j w_j)$ for $r_j>0$
(or to $w_j\mapsto w_j$, if $r_j=0$).
Projective structures with conical points,
that admit a developing map whose image is contained in $\Omega_\k$
and whose monodromy is a subgroup of $\PU_\k$,
are called {\bf admissible} and form a subspace $\Pp_{adm}(S,x)$.

\begin{lemma}
Projective structures with conical points are moderately singular
and the Schwarzian derivative between projective structures
with the same angle data have zero quadratic residue.\\
Hence, every hyperbolic metric with conical points
induces an admissible projective structure.
Moreover,
\[
\xymatrix{
\Yhat(S,x) \ar[rr]^{\Dhat} \ar[rd] && \Pp(S,x) \\
& \Yy(S,x)\ar[ru]_{\Dd}
}
\]
commutes, $\Dd$
is a homeomorphism onto $\Pp_{adm}(S,x)$, which is a closed real-analytic
subvariety. Finally, the restriction of $\Dd$ to
each slice $\Dd_\th:\Yy(S,x)(\th)\lra \Pp(S,x)$
is a homeomorphism onto a real-analytic subvariety of $\Pp_{adm}(S,x)$.
\end{lemma}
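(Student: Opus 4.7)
The plan is to reduce everything to an explicit local computation near each conical point, and then to a global comparison of developing maps. First I would compute the Schwarzian of the model developing map $w_j\mapsto \exp(ir_jw_j)$ in the holomorphic coordinate $z_j=\exp(iw_j)$: a direct calculation gives $S(\dev)(z_j)=\frac{1-r_j^2}{2z_j^2}+O(1)$, which shows at once that projective structures with conical points are moderately singular and that the coefficient of the double pole depends only on $r_j=\th_j/2\pi$. Hence the Schwarzian difference of two projective structures with the same angle data has vanishing double-pole part, i.e.\ lies in $H^0(S,K_S^{\otimes 2}(x))$. For the converse direction, given an admissible metric $g$ of curvature $\k$ with angles $\th$, choosing an isothermal coordinate $z_j$ in which $g$ takes the prescribed normal form allows one to conjugate $\dev$ into the above model, so $g$ induces an element of $\Pp_{con}(S,x)$; since $\dev$ is by construction a local $\k$-isometry into $\Omega_\k$ whose monodromy preserves the pulled-back metric, its image lies in $\Omega_\k$ and its monodromy in $\PU_\k$, proving admissibility. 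Commutativity of the diagram is then immediate: the rescaling $g\mapsto\lambda^2g$ only changes $\k$ to $\k/\lambda^2$, and the new developing map differs from the old one by the projective rescaling that identifies $\Omega_{\k/\lambda^2}$ with $\Omega_\k$, so the underlying projective structure is unchanged and $\Dhat$ factors through $\Yy(S,x)$.

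The map $\Dd:\Yy(S,x)\to\Pp_{adm}(S,x)$ is then injective, because any admissible projective structure determines, via pullback along $\dev$ of the $\PU_\k$-invariant metric of curvature $\k$ on $\Omega_\k$, an admissible metric on $\dot{S}$ unique up to the $\R_+$-action; surjectivity onto $\Pp_{adm}(S,x)$ follows from applying the same pullback construction to an arbitrary admissible projective structure, the local conical model forcing the correct singular behavior at each $x_j$. Continuity in both directions is built into the construction, giving $\Dd$ as a homeomorphism onto its image.

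Finally, for the closed real-analytic structure on $\Pp_{adm}(S,x)$, I would exhibit it inside $\Pp(S,x)$ as the locus where (i) the quadratic residue at each $x_j$ is real and nonpositive, cutting out the angle data $\th\in\La(S,x)$ via a real-analytic map, and (ii) the monodromy representation, which depends real-analytically on the underlying projective structure, is conjugate into the real form $\PSL_2(\R)\subset\PSL_2(\C)$ (or into $\SE_2(\R)$ in the $\k=0$ case). Both conditions are closed and real-analytic, and intersecting with the fiber over a fixed $\th$ then produces the slice $\Pp_{adm}(\th)$ and the slicewise statement. The main technical obstacle I anticipate is formulating condition (ii) intrinsically in terms of the projective data, so as to avoid circularly invoking the later Theorem~\ref{thm:holonomy} on the holonomy being a local diffeomorphism.
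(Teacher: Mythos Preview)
Your plan matches the paper's proof: an explicit Schwarzian computation for the local model $z_j\mapsto z_j^{r_j}$, the pullback-of-metric argument for bijectivity and continuity of $\Dd$, and the characterization of $\Pp_{adm}$ as the locus cut out by a holonomy-in-$\PU_\k$ condition together with a quadratic-residue condition. The only cosmetic differences are that the paper takes the Schwarzian relative to the Poincar\'e cusp structure (leading coefficient $-\tfrac{1}{2}r_j^2$) rather than the standard $\C\Pr^1$ structure (your $\tfrac{1-r_j^2}{2}$), and that it finesses the obstacle you flag at the end by phrasing the holonomy condition only \emph{locally} near admissible structures, which suffices for closedness and real-analyticity.
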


\begin{proof}
Admissibility is a simple computation: it turns out that
the Schwarzian derivative (with respect to the Poincar\'e
structure with cusps at $x$)
can be written as
\[
\Sch = \left[-\frac{1}{2}\left( \frac{\th_j}{2\pi}\right)^2
+O\left (z_j\right)\right]\,
\frac{dz_j^2}{z_j^2}
\]
where $z_j$ is a local holomorphic coordinate around $x_j$.
Notice also that the Schwarzian derivative of a
projective structure with conical singularities $\th$
with respect to another projective structure with conical
singularities $\wti{\th}$
looks like
\[
\Sch=\left[\frac{1}{2}\left( \frac{\wti{\th}_j^2-\th_j^2}{(2\pi\th_j)^2}
\right)
+O\left (z_j\right)\right]\,
\frac{dz_j^2}{z_j^2}
\]
around $x_j$ (the expression is valid also for $\wti{\th}_j=0$
and $\th_j>0$). This proves the claim on the residue of $\Sch$.

As the metric can be obtained up to scale by pulling back
the metric of $\Omega_\k$ via $\mathrm{Dev}$, it follows
that $\Dd$ is bijective. It is easy to check that $\Dd$
and $\Dd^{-1}$ are continuous.

Finally, observe that admissible projective structures
are characterized by the fact that the image of $\dev$ sits in $\Omega_\k$
and it has conical singularities at $x$.
The former is a real-analytic closed condition, that can be locally
rephrased in terms of holonomy in $\PU_\k$.
The latter is also a closed real-analytic condition
that can be phrased in terms of quadratic
residues of Schwarzian derivative (with respect
to the Poincar\'e structure with cusps at $x$).
A similar argument holds for the image of $\Dd_\th$.
\end{proof}

\begin{remark}
It can be proven that $\Pp_{adm}(S,x)$ is smooth and that
the natural map $\AMet(S,x)\lra\Pp_{adm}(S,x)$ is smooth
and submersive, which authorizes
to put on $\Yy(S,x)$ the smooth structure induced by $\Pp_{adm}(S,x)$.
Thus, $\Yhat(S,x)$ has a smooth structure too.
\end{remark}

Clearly, chosen a base point in $\dot{S}$,
we also have an associated {\bf holonomy representation}
\[
\rho: \gru:=\pi_1(\dot{S}) \lra \PU_\k
\]
whose image is discrete, for instance, if
each $\th_j=2\pi r_j$
with $1/r_j\in \N_+$. However, for almost all angles $\th$
the representation $\rho$ does not have discrete image.
%
%In either case, such a $\rho$ determines the
%admissible metric on $S$ and so also
%the angle parameters $\th$.
%
%\begin{definition}
%Let $(S,x)$ be a pointed surface, $\gru=\pi_1(\dot{S})$ and call
%$\g_j\in\gru$ the loops that wind around a single $x_j$.
%A representation $\rho:\gru\rar\PGL_2(\C)$ is {\bf admissible} if
%\begin{itemize}
%\item[(1)]
%$\rho(\gru)\subset\PU_\k$ for some $\k\leq 0$
%\item[(2)]
%$\rho(\a)$ is hyperbolic for every simple closed curve $\a\notin
%\{\g_1,\dots,\g_j\}$
%\item[(3)]
%$\rho(\g_j)$ is parabolic (if $\th_j=0$), the identity (if $\th_j=2m\pi$
%with $0\neq m\in\Z$) or elliptic.
%\end{itemize}
%\end{definition}

Given a Lie group $G$,
call $\Rep(\gru,G)$ the space $\mathrm{Hom}(\gru,G)/G$
of representations up to conjugation.

We will denote by $\Hol$ the holonomy map
$\Hol:\Pp(S,x)\lra \Rep(\gru,\PGL_2(\C))$
(and by abuse of notation, its compositions
$\Yhat(S,x)\rar\Yy(S,x)\lra\Rep(\gru,\PGL_2(\C))$
with $\Dd$)
and by $\hol$ its ``restricted'' versions
$\hol:\Yy(S,x)(\La_-)\lra \Rep(\gru,\PSL_2(\R))$
and $\hol:\Yy(S,x)(\La_0)\lra\Rep(\gru,\mathrm{SE}_2(\R))$,
obtained using the isomorphisms $\PU_\k\cong\PSL_2(\R)$
and $\PU_0\cong\mathrm{SE}_2(\R)$.
%Similarly, we will have normalized versions of $\Dd$
%and of the associated projective structure.

Notice that the traces of the holonomies of the boundary
loops do not detect
the angles $\th\in\R^n$ at the conical points (with the exception
of the cusps), but just their class in $(\R/2\pi\Z)^n$. Thus,
we have a commutative diagram
\[
\xymatrix{
\Yy(S,x) \ar[r]^{\Dd} &
\Pp_{con}(S,x) \ar[d]^{\Hol}\ar[r]^{\Theta} & \R_{\geq 0}^n \ar[d]\\
& \Rep(\gru,\PGL_2(\C)) \ar[r]^{\ol{\Theta}} & (\R/2\pi\Z)^n
}
\]

\begin{theorem}\label{thm:holonomy}
The holonomy maps
satisfy the following properties:
\begin{itemize}
\item[(a)]
the restriction $\hol:\Pp_{adm}(S,x)\rar \Rep(\gru,\PGL_2(\C))$ to
$\Theta^{-1}(\La^\circ)$ is a real-analytic immersion
and so $\Theta^{-1}(\La^\circ)$ is smooth;
\item[(b)]
$\hol\Big|_{\La_{sm,-}}$ and
$\hol\Big|_{\La_{sm,0}}$ are injective onto open subsets
of the corresponding representation spaces.
\end{itemize}
Hence,
$\hol\Big|_{\La_{sm,-}}$ and $\hol\Big|_{\La_{sm,0}}$
are diffeomorphisms onto their images.
\end{theorem}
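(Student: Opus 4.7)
The plan is to split the proof along the two items. For (a), I would identify the differential of the holonomy map with a natural restriction of cohomology groups and prove its injectivity by analyzing the local monodromy at the conical points; for (b), I would use the collar lemma available for small angles to reconstruct the metric rigidly from its holonomy.

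For (a), fix $\sigma\in\Pp_{adm}(S,x)$ with $\Theta(\sigma)=\th\in\La^\circ$ and set $\rho=\Hol(\sigma)$. Standard deformation theory for $(\PGL_2(\C),\C\Pr^1)$-structures identifies $T_\sigma\Pp(S,x)$ with a natural subspace of $H^1(\dot S,\Ad\,\rho)$ (the image of compactly-supported classes, through Schwarzian derivatives) and $T_{[\rho]}\Rep(\gru,\PGL_2(\C))$ with $H^1(\gru,\Ad\,\rho)$; the differential $d\Hol_\sigma$ is then the canonical restriction map. Its kernel is controlled by the local contributions at each end $x_j$. When $\th_j\notin 2\pi\N_+$, the boundary holonomy around $x_j$ is a nontrivial rotation in $\PU_\k$ whose centralizer in $\psl_2(\C)$ is one-dimensional; the single real parameter of local deformation it permits corresponds exactly to the infinitesimal variation of the angle $\th_j$. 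Since the diagram just before the theorem shows that the angle datum factors through the holonomy up to $(\R/2\pi\Z)^n$, admissible deformations inside $\Theta^{-1}(\La^\circ)$ with vanishing $d\hol$ must also fix each $\th_j$, so the boundary kernel meets the admissible directions only trivially. This yields both the immersion property and the smoothness of $\Theta^{-1}(\La^\circ)$.

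For (b), let $g_1,g_2\in\Yy(S,x)(\La_{sm,-})$ have conjugate holonomies. As $\th_j\in[0,\pi)$, the trace of $\rho$ on a small loop about $x_j$ determines $\th_j$ unambiguously in this interval, so already $\Theta(g_1)=\Theta(g_2)$. Choose $\rho$-equivariant developing maps $\dev_1,\dev_2:\wti{\dot S}\to\H$; after conjugation we may assume they agree on a base point together with an initial tangent frame. They then coincide on the complement of arbitrarily small lifted disks around the $x_j$, and Lemma~\ref{lemma:collar} (the collar lemma for angles $<\pi$) guarantees that these disks embed disjointly, allowing the coincidence to be propagated across the conical points. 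Therefore $g_1=g_2$. The flat case $\La_{sm,0}$ is identical with $(\H,\PSL_2(\R))$ replaced by $(\R^2,\SE_2(\R))$. Combined with (a) and invariance of domain between equidimensional real-analytic manifolds, this yields the final diffeomorphism onto an open subset.

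The main obstacle will be the cohomological computation of $d\hol$ in (a): one must carefully isolate the admissible subspace of $T_\sigma\Pp(S,x)$ cut out by preservation of the angle datum and verify that its intersection with the subspace of cocycles supported near the boundary is trivial precisely when no $\th_j$ lies in $2\pi\N_+$. Once that is established, item (b) becomes a more routine geometric rigidity statement, the collar lemma providing exactly the control needed to propagate the local coincidence of the two developing maps through the conical singularities.
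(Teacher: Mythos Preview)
Your approach to (a) is different from the paper's but viable in principle: the paper does not analyze $d\hol$ cohomologically, but instead constructs explicit local coordinates on $\Yy(S,x)$ near $g$ by taking the $a$-lengths of an adapted triangulation (Proposition~\ref{prop:coord-holonomy} and Corollary~\ref{cor:adapted-coordinates}), and observes that each $a_i$ is a real-analytic function of the holonomy when no $\th_j\in 2\pi\N_+$. Your cohomological route is closer to Luo's original argument that the paper cites; it buys generality (it works for all projective structures, not just admissible ones) at the cost of needing a careful identification of the admissible tangent subspace, which you flag correctly as the main obstacle.

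Your argument for (b), however, has a genuine gap. Once you have arranged $\rho_1=\rho_2=\rho$ by conjugation, the only remaining freedom in each developing map is post-composition by an element of the centralizer of $\rho(\gru)$ in $\PSL_2(\R)$, which is trivial for an irreducible $\rho$. So you \emph{cannot} in general further arrange that $\dev_1$ and $\dev_2$ agree at a base point and frame while keeping the same equivariance; and if you could, analytic continuation would force $\dev_1=\dev_2$ on all of $\wti{\dot S}$, not merely outside small disks, making the collar lemma irrelevant. In fact your argument, as written, uses nothing specific to small angles and would prove injectivity of $\hol\big|_\th$ for every $\th$, contradicting Proposition~\ref{prop:injectivity}. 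The paper's proof of (b) proceeds quite differently: Lemma~\ref{lemma:geodesic} guarantees that for $\th_j<\pi$ every simple closed curve in $\dot S$ has a unique smooth geodesic representative whose length is read off from the trace of the holonomy; one then reconstructs the hyperbolic surface from a pair-of-pants decomposition in the standard way. The small-angle hypothesis enters precisely in ensuring these geodesic representatives exist, which is what fails for larger angles.
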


As a consequence,
$\Hol\Big|_{\La_{sm,-}}$ and $\Hol\Big|_{\La_{sm,0}}$
are diffeomorphisms onto their images too.

\begin{proof}
Part (a) was established by Luo \cite{luo:holonomy} in greater
generality.
In the flat case, it was already known to Veech \cite{veech:flat}.
Proposition~\ref{prop:coord-holonomy} gives a proof for the
hyperbolic and flat case that uses lengths of arcs dual to the spine.

Part (b) is a consequence of Lemma~\ref{lemma:geodesic},
which guarantees that there exists a (unique) smooth geodesic in
each homotopy class of simple closed curves, if the angles are smaller
than $\pi$, and that its length can be computed from the holonomy
representation. Thus, the injectivity follows from the standard reconstruction
principle for hyperbolic surfaces which are decomposed into a union of
pair of pants.
\end{proof}

Actually, a more careful look shows that, in negative curvature,
if $\th_j\leq \pi$ for every $j$,
then pair of pants decompositions still exist,
the reconstruction principle works
and the holonomy map is still injective.
Of course, one must allow ``degenerate pair of pants'' consisting
of one segment, which are obtained by cutting along a simple closed
geodesic which separates a couple $\{x_i,x_j\}$ with $\th_i=\th_j=\pi$ from
the rest of the surface and which consists of twice a geodesic
segment that joins $x_i$ and $x_j$.
% (If $n=1$ and $\th_1=2\pi$, then
%a similar situation occurs when a pair of disjoint simple closed curves
%separates a pair of pants containing $x_1$ from the rest of the surface:
%in this case, the pair of pants degenerates to a simple closed geodesic
%passing through $x_1$.)

Even though we will not formalize this approach here,
it is intuitive that the failure of the injectivity for
$\hol\Big|_{\th}$ is related to the lack of properness
of $\hol\Big|_{\th}$ and so
to the possibility of extending the holonomy map to some points
in the boundary of the augmented
Teichm\"uller space $\ol{\Teich}(S,x)$ in such a way that the holonomy
of a pinched loop is sent to an elliptic element of $\PSL_2(\R)$.

In fact,
if $J\in\ol{\Teich}(S,x)$, then $\hol\Big|_\th$ for hyperbolic metrics
continuously extends to $J$ if and only if
we can associated to $J$ a $\th$-admissible metric
$g$ in which the only type of degeneration is given by conical points
$x_{i_1},\dots,x_{i_k}$ with $\th_{i_1}+\dots+\th_{i_k}>2\pi (k-1)$
coalescing together.
When this singularity occurs,
the loop surrounding the coalescing points
has elliptic holonomy.

Hence, if there are $i_1,\dots,i_k$ such that $\th_{i_1}+
\dots+\th_{i_k}>2\pi(k-1)$, then
{\it the holonomy map $\hol\Big|_{\th}$
is not proper}, but it will become so if we extend it
to those points of $\ol{\Teich}(S,x)$ corresponding to the degenerations
mentioned before.

In the flat case, the situation is different as we don't have a collar
lemma (see Lemma~\ref{lemma:collar}),
so that injectivity may fail for arbitary small angles.
However, as in the hyperbolic case,
we do not have properness of the holonomy map
if $\th_i+\th_j>2\pi$ for certain $i\neq j$
(or if $\th_1>2\pi$ and $n=1$).

As an example of the non-injectivity phenomenon we have the following.

\begin{proposition}\label{prop:injectivity}
{\rm{(a)}}
Let $\th\in\La_-$ be angle data such that $\th_h+\th_j>2\pi$
for certain $h\neq j$.
Then $\hol\Big|_\th$ is not injective.\\
{\rm{(b)}}
Let $\th\in\La_0$ be angle data such that $\th_h+\th_j\in (2\pi,\infty)
\cap\Q$ for certain $h\neq j$. Then $\hol\Big|_\th$ is not injective.
\end{proposition}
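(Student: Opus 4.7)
\noindent\emph{Proof plan.} The strategy in both parts is to find a metric $g\in\Yy(S,x)(\th)$ whose holonomy around a suitable separating loop $\gamma$ has finite order $N$, and then to exploit the fact that $T_\gamma^N\in\Mod(S,x)$ acts nontrivially on $\Yy(S,x)(\th)$ while preserving the holonomy. Choose $\gamma\subset\dot{S}$ simple closed, separating $\{x_h,x_j\}$ from the rest of $(S,x)$; since $\chi(\dot{S},\th)$ is negative (resp.\ zero), either $n\geq 3$ or the genus of $S$ is positive, so the complementary region of $\gamma$ is nontrivial, $\gamma$ is essential in $\dot{S}$, and $T_\gamma$ has infinite order in $\Mod(S,x)$. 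In $\pi_1(\dot{S})$, the class $[\gamma]$ is, up to basepoint, the product $c_h c_j$ of positively-oriented peripheral loops around $x_h$ and $x_j$.

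For part (a), the rotation angle $\beta(g)$ of $\rho_g(\gamma)$ varies real-analytically with $g\in\Yy(S,x)(\th)$ wherever $\rho_g(\gamma)$ is elliptic. As $g$ approaches the boundary stratum of $\Tbar(S,x)$ on which $x_h$ and $x_j$ coalesce -- precisely the non-properness locus of $\hol|_\th$ discussed just after Theorem~\ref{thm:holonomy} -- $\rho_g(\gamma)$ tends to an elliptic rotation of angle $(\th_h+\th_j)\bmod 2\pi$. Deforming $g$ in the opposite Teichm\"uller direction so that $x_h$ and $x_j$ are pulled sufficiently far apart, the trace of $\rho_g(\gamma)$ increases until $\rho_g(\gamma)$ becomes parabolic and then hyperbolic. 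By the intermediate value theorem, for every sufficiently large integer $N$ there exists $g_N\in\Yy(S,x)(\th)$ with $\beta(g_N)=2\pi/N$, so $\rho_{g_N}(\gamma)^N=\mathrm{id}$. The mapping class $T_\gamma^N$ acts on $\pi_1(\dot{S})$ by $\alpha\mapsto\alpha\gamma^{N\cdot i(\alpha,\gamma)}$ on each generator, so $\rho_{g_N}\circ(T_\gamma^N)_*=\rho_{g_N}$ and hence $\hol(T_\gamma^N\cdot g_N)=\hol(g_N)$. On the other hand $T_\gamma^N\cdot g_N\neq g_N$ in $\Yy(S,x)(\th)$: otherwise $T_\gamma^N$ would be represented by an isometry of $g_N$, contradicting that $\Iso(g_N)$ is finite while $T_\gamma^N$ has infinite order in $\Mod(S,x)$.

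Part (b) is analogous inside $\PU_0\cong\SE_2(\R)$. The rotational part of $\rho_g(\gamma)$ is rotation by $\th_h+\th_j\bmod 2\pi$ for every $g$; under the rationality hypothesis, writing $(\th_h+\th_j)/(2\pi)=p/q$ in lowest terms with $q\geq 2$, this rotational part has order exactly $q$. Any element of $\SE_2(\R)$ with nontrivial rotational part is conjugate to a pure rotation about its unique fixed point, hence has the same order as its rotational part. Therefore $\rho_g(\gamma)^q=\mathrm{id}$ for every such $g$, and the Dehn-twist argument of part (a) applies with $N=q$. The main obstacle is the intermediate-value step in part (a): one must verify that $\beta(g)$ really sweeps an open interval containing some $2\pi/N$, which hinges on the continuous behaviour of $\rho_g(\gamma)$ at the degeneration endpoint (where $x_h,x_j$ coalesce) and on the production of at least one metric $g\in\Yy(S,x)(\th)$ for which $\rho_g(\gamma)$ is hyperbolic.
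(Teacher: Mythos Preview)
Your approach is essentially the paper's: pick a simple closed curve $\beta$ separating $\{x_h,x_j\}$, find a metric $g$ with $\hol(g)(\beta)$ of finite order $q$, and observe that the Dehn twist power $\tau_\beta^q$ fixes the holonomy but acts freely on $\Teich(S,x)$. Part~(b) in particular is identical to the paper's argument.

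The gap you flag in part~(a) --- the intermediate-value step --- is exactly what the paper settles, but by a direct trace computation rather than a limiting argument. Writing down explicit matrices for $\hol(g)(\g_h)$ and $\hol(g)(\g_j)$ in terms of the reduced angles $\tilde\th_h,\tilde\th_j\in(0,2\pi)$ and the hyperbolic distance $d=d(x_h,x_j)$, the paper gets
\[
|\tr\hol(g)(\beta)|=2\bigl|\cos(\tilde\th_h/2)\cos(\tilde\th_j/2)-\cosh(d)\sin(\tilde\th_h/2)\sin(\tilde\th_j/2)\bigr|,
\]
which for small $d>0$ is strictly below $2$. This avoids both the passage to the coalescence boundary and the production of a hyperbolic $\hol(\beta)$ at large $d$: one only needs that the trace sweeps some open subinterval of $[0,2]$ as $d$ varies, and then density of the values $2|\cos(\pi p/q)|$ (rather than the sparser set $2\cos(\pi/N)$ you target) immediately yields a metric with $\hol(\beta)$ of finite order. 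So your plan is correct; the paper's explicit formula is simply a cleaner way to close the loop you left open.
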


\begin{proof}
The case in which some angles are positive multiples of $2\pi$ are
treated in Proposition~\ref{prop:integral}, so that we can assume
that no holonomy along the loop $\g_k$ that winds around $x_k$
is the identity for all $k=1,\dots,n$.

Let's analyze case (a).
Because $\th_h+\th_j>2\pi$, there are metrics in which $x_h$ and $x_j$
are at distance $d>0$ arbitrarily small.
Given a metric $g$, we can assume up to conjugation that
\begin{align*}
\hol(g)(\g_h) & =
\left(
\begin{array}{cc}
\cos(\ti{\th}_h/2) & -\sin(\ti{\th}_h/2) \\
\sin(\ti{\th}_h/2) & \cos(\ti{\th}_h/2)
\end{array} 
\right) \\
\hol(g)(\g_j) & =
\left(
\begin{array}{cc}
\cos(\ti{\th}_j/2) & -e^d \sin(\ti{\th}_j/2) \\
e^{-d}\sin(\ti{\th}_j/2) & \cos(\ti{\th}_j/2)
\end{array}
\right)
\end{align*}
where $\ti{\th}_j,\ti{\th}_h\in(0,2\pi)$,
$\th_j\equiv\ti{\th}_j$ and $\th_h\equiv\ti{\th}_h$
mod $2\pi$.

Thus, the loop $\b:=\g_j\ast\g_h$ has
holonomy $\hol(g)(\b)=
\hol(g)(\g_h)\hol(g)(\g_j)$
with
\[
|\mathrm{Tr}(\hol(g)(\b))|=
2|\cos(\ti{\th}_h/2)\cos(\ti{\th}_j/2)-
\cosh(d)\sin(\ti{\th}_h/2)\sin(\ti{\th}_j/2)|
\]
which is strictly smaller than $2|\cos[(\ti{\th}_h+\ti{\th}_j)/2|\leq 2$.

Hence, there exists another metric $g'$ for which
such a $d>0$ is small and
$|\mathrm{Tr}(\hol(g')(\b))|=
2|\cos(\pi p/q)|$, where $p,q$ are positive coprime integers and $p/q<1$,
and so $\hol(g')(\b)$ has order $q$.

Let $\tau_\b\in\Mod(S,x)$ be the Dehn twist along $\b$.
If we place the basepoint for $\pi$ outside
the component of $S\setminus\b$ the contains $x_h$ and $x_j$,
then the action of
$\tau_\b$ on $\Rep(\gru,\PSL_2(\R))$ is trivial on every
loop that does not meet $\b$ and it is by conjugation by $\hol(\cdot)(\b)$
on $\g_h$ and $\g_j$.
Hence, $\tau_\b^q$ fixes $\hol(g')$ but it acts freely on $\Teich(S,x)$,
which shows that the holonomy map is not injective.

The proof of (b) follows the same lines, but it's actually easier.
In fact, $\hol(g)(\b)$ is actually a rotation of angle
exactly $\th_1+\th_2-2\pi$ (centered somewhere in the plane). Thus,
it is of order $q$. Hence, $\tau_\b^q$ acts trivially
on $\Rep(\gru,\SE_2(\R))$ but freely on $\Teich(S,x)$ and
the conclusion follows.
\end{proof}

A suitable modification of part (a) of the above proof would
also show that injectivity would similarly fail if $n=1$ and $\th_1>2\pi$.

Our feeling is that the non-injectivity of the holonomy map
in negative curvature is only associated to the phenomenon above.
It would be interesting to make this precise.

Another interesting issue is to understand when the images of $\hol\Big|_{\th}$
and $\hol\Big|_{\wti{\th}}$ intersect. For instance,
if all angles are integral multiples of $2\pi$, then the holonomy
representation descends to $\Rep(\pi_1(S),\PSL_2(\R))$
and Milnor-Wood's inequality allows us to recover
$\th_1+\dots+\th_n$. Given a $\th$-admissible hyperbolic
metric $g$, the question then becomes whether $\hol(g)$
remembers at least the area of $g$.

The last piece of information about the holonomy maps
concerns what happens when some angles are {\it integral},
i.e. integral multiples of $2\pi$, and so the corresponding holonomies
are the identity.

\begin{proposition}\label{prop:integral}
Let $G=\PSL_2(\R)$ (if $\chi(\dot{S},\th)<0$)
or $G=\SE_2(\R)$ (if $\chi(\dot{S},\th)=0$).
\begin{itemize}
\item[(1)]
If $\th\in \La^\circ(S,x)$,
then $\hol\Big|_{\th}:\Yy(S,x)(\th)\lra\Rep(\gru,G)$
is a locally closed real-analytic diffeomorphism onto its image.
\item[(2)]
If $\th_j= 2\pi$, then
$\hol\Big|_{\th}:\Yy(S,x)(\th)\cong\Teich(S,x)\lra
\Rep(\gru,G)$ is constant
along the fibers of the forgetful map
$\Teich(S,x)\rar\Teich(S,x\setminus\{x_j\})$.
\item[(3)]
If $\th_j=2\pi r_j$ with $r_j\geq 1$ integer
and if $z_j$ is a holomorphic coordinate
on $S$ around $x_j$ such that
locally $\dev(z_j)=z_j^{r_j}+b$, then
the differential of
$\hol\Big|_{\th}:\Pp_{con}(S,x)(\th)\lra\Rep(\gru,G)$
vanishes along the tangent directions
determined by deforming the local developing
map around $x_j$ as
$\dev_\e(z_j)=b+(z_j+\e cz_j^{1-r_j})^{r_j}+o(\e)=
b+z_j^{r_j}+r_j c\e+o(\e)$, for every $c\in \C$.
Hence, the differential of $\hol\Big|_{\th}:\Yy(S,x)(\th)\cong
\Teich(S,x)\lra \Rep(\gru,G)$ vanishes
along the first-order Schiffer variation $c z_j^{1-r_j}\frac{\pa}{\pa z_j}$.
\end{itemize}
\end{proposition}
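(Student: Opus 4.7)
Part (1) follows immediately from Theorem~\ref{thm:holonomy}(a): the restriction of $\hol$ to $\Theta^{-1}(\La^\circ)$ is a real-analytic immersion, so further restricting to a single slice $\Yy(S,x)(\th)$ with $\th\in\La^\circ$ gives an immersion between real-analytic manifolds of equal real dimension $6g-6+2n$, hence a local diffeomorphism. Its image is cut out locally in $\Rep(\gru,G)$ by the $n$ trace equations imposing the conjugacy classes of the boundary loop holonomies, so it is a locally closed real-analytic subvariety.

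For part (2), the local form in the definition of admissibility with $r_j=1$ reduces to $f(z_j)|dz_j|^2$, so any $\th$-admissible metric with $\th_j=2\pi$ is genuinely smooth at $x_j$ and is therefore a $\th_{\hat{j}}$-admissible metric on $(S,x\setminus\{x_j\})$. By Theorem~\ref{thm:metrics} such a metric is uniquely determined by the image of its conformal structure in $\Teich(S,x\setminus\{x_j\})$. Since $\g_j$ bounds a disc in $S\setminus(x\setminus\{x_j\})$, its holonomy is trivial and the $\pi_1(\dot S)$-representation factors through the surjection $\pi_1(\dot S)\twoheadrightarrow\pi_1(S\setminus(x\setminus\{x_j\}))$; two points in the same fiber of the forgetful map differ by a diffeomorphism in $\Diff_0(S,x\setminus\{x_j\})$, so the corresponding representations of $\pi_1(S\setminus(x\setminus\{x_j\}))$ are conjugate in $G$, and pulling back they define the same class in $\Rep(\gru,G)$.

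The substance is in part (3). Expanding
\[
b+(z_j+\e c z_j^{1-r_j})^{r_j}=\dev(z_j)+r_j c\e+O(\e^2)
\]
shows that the first-order variation of $\dev$ near $x_j$ is the constant $r_j c$, which in the affine chart containing $b$ is an infinitesimal projective transformation $v\in\psl_2(\C)$, namely the translation by $r_j c$. I would realize the Schiffer variation by a cut-and-paste: fix a small disc $U\ni x_j$, leave the projective structure unchanged on $S\setminus U$, and glue to the deformed local chart across $\pa U$. Matching $\dev(z_j)=z_j^{r_j}+b$ on the outside with $\dev_\e(z_j)=z_j^{r_j}+b+r_j c\e+O(\e^2)$ on the inside forces this transition to be, to first order, the translation $T_{r_j c\e}\in\PGL_2(\C)$. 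Absorbing this into a global gauge transformation---using $T_{r_j c\e}\circ\dev$ as developing map outside $U$---conjugates the whole holonomy representation by $T_{r_j c\e}$, so the infinitesimal cocycle representing the variation of $\hol$ is $u(\g)=v-\Ad(\rho(\g))v$, a coboundary vanishing in $T_{[\rho]}\Rep(\gru,G)\cong H^1(\gru,\Ad\rho)$. The statement for $\Yy(S,x)(\th)\cong\Teich(S,x)$ then follows by composing with $\Dd$.

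The main obstacle I anticipate is identifying the cut-and-paste gluing rigorously with the tangent vector at $P$ in $\Pp_{con}(S,x)(\th)$ produced by $c z_j^{1-r_j}\pa/\pa z_j$, in particular verifying that the Kodaira--Spencer class of the deformation of complex structure coincides with $[c z_j^{1-r_j}\pa/\pa z_j]$ and that the inside-outside gluing preserves $\th$-admissibility at all conical points simultaneously. This should be handled by representing the Schiffer variation by a compactly supported Beltrami differential near $x_j$ and tracking the resulting first-order deformation of the developing map on the universal cover.
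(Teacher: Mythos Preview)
Your argument is correct and, for parts~(1) and~(3), follows essentially the same line as the paper, which is much terser: for~(3) the paper simply observes that the holonomy around $x_j$ is trivial and that the displayed deformation of the developing map therefore fixes the holonomy, without writing out the coboundary computation you sketch. Your cut-and-paste/gauge description is a correct unpacking of that sentence.

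The one genuine difference is in part~(2). You prove it directly, using that a $2\pi$-point is a smooth point of the metric and that $\pi_1(\dot S)\twoheadrightarrow\pi_1(S\setminus(x\setminus\{x_j\}))$ kills $\g_j$. The paper instead derives~(2) as the special case $r_j=1$ of~(3): the Schiffer variation $c\,\pa/\pa z_j$ is exactly the infinitesimal motion of $x_j$, so its span is the tangent to the forgetful fiber, and~(3) says $d\hol$ vanishes there. Your direct route is arguably cleaner for~(2) in isolation; the paper's ordering has the virtue of presenting~(2) as the bottom of the hierarchy of degenerations indexed by $r_j$.
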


We recall that a Schiffer variation of complex structure on $(S,J)$ is
defined as follows. Let $D_j\subset S$ be
a disc centered at $x_j$ and let $z_j$ be a
holomorphic coordinate on $D_j$ so that $z_j(D_j)=\{z\in\C\,|\,|z|<1\}$;
call $D_{i,\d}:=\{p\in D_j\,|\,|z_j(p)|<\d\}$.
Given a holomorphic vector field $V=f(z_j)\pa/\pa z_j$
on $\dot{D}_j$ with a pole in $x_j$,
we can define a new Riemann surface $(S_\e,J_\e)$
(which is canonically diffeomorphic
to $S$ up to isotopy) by gluing $D_j$ and
$(S\setminus D_{j,1/2})\cup g_\e(D_j)$ through the map
$g_\e:D_j\setminus D_{j,\d} \rightarrow S\setminus\{x_j\}$
given by
$z\mapsto z+\e f(z)$, which is a biholomorphism onto its image for
$\e$ small enough.

A simple argument shows that the tangent direction in
$T_J \Teich(S,x)\cong H^{0,1}_J(S,T_S(-x))$
determined by such a Schiffer variation does not depend on the disc
$D_j$ and on $\d$, but only on the jet of $V$ at $x_j$.
In particular, we have
\[
\xymatrix{
0 \ar[r] & H^0(S,T_S(-x+\infty x_j))\ar[r] &
\hat{\mathcal{M}}_{S,x_j}/\hat{\mathcal{O}}_{S,x_j}(T_S(-x))
\ar[r] & H^{0,1}(S,T_S(-x)) \ar[r] & 0
}
\]
where $\hat{\mathcal{O}}_{S,x_j}$ is the completed local ring
of functions at $x_j$ and $\hat{\mathcal{M}}_{S,x_j}$ is its field
of fractions. More naively, elements in
$\hat{\mathcal{M}}_{S,x_j}/\hat{\mathcal{O}}_{S,x_j}(T_S(-x))$
can be represented as $(\sum_{-m\leq k\leq 0} c_k z_j^k)\pa/\pa z_j$.

\begin{proof}[Proof of Proposition~\ref{prop:integral}]
Part (1) is clearly a consequence of Theorem~\ref{thm:holonomy}(a).

For part (3), notice that the holonomy around $x_j$ is trivial.
Thus, the vector field $c z_j^{1-r_j}\frac{\pa}{\pa z_j}$ that
deforms the local developing map as $z_j\mapsto (z_j+\e cz_j^{1-r_j})^{r_j}=
z_j^{r_j}+r_j c\e+o(\e)$ produces a deformation
of projective structure which fixes the holonomy.
Clearly, (2) follows from (3).
\end{proof}

\begin{remark}
Notice that a simultaneous Schiffer variation at $x_1,\dots,x_n$
with vector fields $V_1,\dots,V_n$ determine the zero tangent
vector only if they extend to a global section of $T_S$ (holomorphic
on $S\setminus x$), and this can happen only if
$m_1+\dots+m_n\geq 2g-2+n$, where $m_j=\mathrm{ord}_{x_j}(V_j)$.
Thus, if $\chi(\dot{S},\th)<0$ or if
$\th_j\notin 2\pi\N_+$ for some $j$,
then any first-order deformation of an
admissible metric that fixes holonomy
changes the conformal structure.
\end{remark}

%%%%%%%%%%%%%%%%%%%%%%%%%%%%%%%%%%%%%%%%%%%%%%%%%%%%%%%%%%%%%%%%%%%%%%%%%
\section{Poisson structures}\label{sec:poisson}

Now, we will implicitly represent each class in $\Yy(S,x)(\La_-)$
by a metric $g$ of curvature $-1$, so that the (restricted) holonomy map
gives a representation $\rho:\gru=\pi_1(\dot{S})\lra\PSL_2(\R)$.
Because of the choice of a base-point, $\rho$ is only well-defined
up to conjugation by $\PSL_2(\R)$.

On the other hand, we also have a local system $\xi\lra \dot{S}$ defined
by $\xi=(\wti{\dot{S}}\times\lieg)/\gru$,
where $\wti{\dot{S}}$ is the universal cover of $\dot{S}$,
$\lieg=\psl_2(\R)$ is the Lie algebra of $\PSL_2(\R)$ and
$\gru$ acts on $\wti{\dot{S}}$ via deck transformations
and on $\lieg$ via $\rho$ and the adjoint representation.
Let $D_1,\dots,D_n\subset S$ be open disjoint discs such that $x_j\in D_j$
and call $D=\bigcup_j D_j$. We will slightly abuse notation by
denoting still by $\xi$ the restriction of $\xi\rar\dot{S}$ to $\dot{D}$.

We recall that $\Bb(X,Y):=\tr(XY)$ for
$X,Y\in\lieg$ is a nondegenerate symmetric bilinear form
of signature $(2,1)$. Given
\[
H=\left(
\begin{array}{cc}
1 & 0\\
0 & -1
\end{array}
\right)
\qquad
E
=\left(
\begin{array}{cc}
0 & 1\\
0 & 0
\end{array}
\right)
\qquad
F
=\left(
\begin{array}{cc}
0 & 0\\
1 & 0
\end{array}
\right)
\]
then $\{H,E+F,E-F\}$ is a $\Bb$-orthogonal basis of $\lieg$,
with $\Bb(H,H)=\Bb(E+F,E+F)=2$ and $\Bb(E-F,E-F)=-2$.
Notice that $E-F$ generates the rotations around $i\in\H$.
Actually, $\mathfrak{K}=-4\Bb$,
where $\mathfrak{K}$ is the Killing form on $\lieg$.
Denote still by $\Bb$ the induced
pairing on $\lieg^*$.

Deforming the (conjugacy class of the)
representation $\rho$ is equivalent to
deforming the (isomorphism class of the) local system $\xi$.

As shown for instance in \cite{goldman:symplectic}, first-order
deformations of $\rho\in\Rep(\gru,\PSL_2(\R))$
are parametrized by $H^1(\dot{S};\xi)$.
Thus, $T_{\rho}\Rep(\gru,\PSL_2(\R))\cong H^1(\dot{S};\xi)$ and dually
$T^*_{\rho}\Rep(\gru,\PSL_2(\R))\cong H_1(\dot{S};\xi^*)$,
which is isomorphic to $H^1(\dot{S},\dot{D};\xi)$ by
Lefschetz duality (and the nondegeneracy of $\Bb$).

When no $\th_j\in 2\pi\N_+$,
the long exact sequence in cohomology for
the couple $(\dot{S},\dot{D})$
give rise to the following identifications
\[
\xymatrix@R=0.2cm{
0 \ar[r] &
H^0(\dot{D};\xi) \ar[r] \ar@{=}[d]^\wr &
H^1(\dot{S},\dot{D};\xi) \ar[r] \ar@{=}[d]^\wr &
H^1(\dot{S};\xi) \ar[r] \ar@{=}[d]^\wr &
H^1(\dot{D};\xi) \ar[r] \ar@{=}[d]^{\wr} &
0
\\
0 \ar[r] &
(\R^n)^* \ar[r]^{(d\ol{\Theta})^*\qquad} &
T^*_{\rho}\Rep(\gru,\PSL_2(\R)) \ar[r]^{\eta} &
T_{\rho}\Rep(\gru,\PSL_2(\R)) \ar[r]^{\qquad d\ol{\Theta}} &
\R^n \ar[r] &
0
}
\]
where $g\in\Yy(S,x)(\La_-^\circ)$ and
$H^0(\dot{S};\xi)\cong H^2(\dot{S},\dot{D};\xi)^*=0$
because $\rho$ has no fixed vectors.

Notice that, if $g$ is a $\th$-admissible
metric, the parabolic cohomology group
$H^1_P(\dot{S};\xi)$ at $\rho=\hol(g)$, defined as the image of
$H^1(\dot{S},\dot{D};\xi) \rar H^1(\dot{S};\xi)$
identifies (via $\hol$) to the space of those first-order
deformations of metrics (equivalently,
of projective structures) with conical
singularities along which $\th$ is constant.

At a point $\rho$ such that $\th_j\in 2\pi\N_+$,
we have $H^0(\dot{D}_j;\xi)\cong
\lieg\cong H^1(\dot{D}_j;\xi)$ and so $\Rep(\gru,\PSL_2(\R))$
is singular at such a $\rho$. 
In this case, there are deformations of $\rho$
which correspond to opening a hole or creating a cusp at $x_j$.
%However, even if we restrict to $\hol(\Yy(S,x))$, then
%the locus where $\th_j\in 2\pi\N_+$ is singular.
Conversely, if no $\th_j\in 2\pi\N_+$, then
$\hol(g)$ lies in the smooth locus of $\Rep(\gru,\PSL_2(\R))$.

Though not completely trivial,
the following result can be obtained adapting arguments from
\cite{atiyah-bott:yang-mills},
\cite{goldman:symplectic} or \cite{karshon:algebraic}, who proved
that $\eta$ defines a symplectic structure if $x$ is empty.

\begin{lemma}
The alternate pairing $\eta$ defines a Poisson structure
on the smooth locus of $\Rep(\gru,\PSL_2(\R))$.
Hence, the pull-back of $\eta$ through $\hol$
defines a Poisson structure on $\Yy(S,x)(\La_-^\circ)\cong
\Teich(S,x)\times\La_-^\circ(S,x)$,
which will still be denoted by $\eta$.
\end{lemma}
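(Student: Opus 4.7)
The plan is to verify that $\eta$ is a well-defined bivector on the smooth locus of $\Rep(\gru,\PSL_2(\R))$ and that it satisfies the Jacobi identity. First I would globalize the pointwise construction from the diagram in the excerpt: on the open locus where $\th_j\notin 2\pi\N_+$ for every $j$, the cohomology groups $H^1(\dot{S},\dot{D};\xi)$ and $H^1(\dot{S};\xi)$ assemble into vector bundles canonically identified with $T^*\Rep(\gru,\PSL_2(\R))$ and $T\Rep(\gru,\PSL_2(\R))$, and the map $\eta$ is the relative cup product $H^1(\dot{S},\dot{D};\xi)\otimes H^1(\dot{S},\dot{D};\xi)\rar H^2(\dot{S},\dot{D};\R)\cong \R$ arising from $\Bb$, reinterpreted as a map into $H^1(\dot{S};\xi)$ via Lefschetz duality. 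Skew-symmetry of $\eta$ follows from the graded commutativity of cup product together with the symmetry of $\Bb$. Moreover, the exact sequence in the excerpt identifies $\ker(\eta:T^*\Rep\rar T\Rep)$ with the image of $H^0(\dot{D};\xi)\cong(\R^n)^*$, which is precisely the span of the differentials $d\ol{\Theta}_j$ of the peripheral trace functions.

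Next I would identify the symplectic leaves. Since the $d\ol{\Theta}_j$ generate $\ker(\eta)$, the $\ol{\Theta}_j$ must be Casimirs (once Jacobi is established), and the symplectic leaves are the fibers of $\ol{\Theta}:\Rep(\gru,\PSL_2(\R))\rar(\R/2\pi\Z)^n$, i.e.\ the relative representation varieties on which the $\PSL_2(\R)$-conjugacy class of each boundary holonomy $\rho(\g_j)$ is prescribed. On such a fiber, $\eta$ restricts to the cup-product pairing on parabolic cohomology $H^1_P(\dot{S};\xi)$, which by the work of Atiyah--Bott \cite{atiyah-bott:yang-mills}, Goldman \cite{goldman:symplectic} and Karshon \cite{karshon:algebraic} is a closed nondegenerate $2$-form; this is the standard symplectic form on the moduli of flat $\psl_2(\R)$-connections with fixed boundary holonomy. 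The argument requires only minor adaptations because, away from loci where the stabilizer of $\rho$ jumps, the short exact sequence of the pair $(\dot{S},\dot{D})$ behaves exactly as in the boundary case.

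Finally, to glue these leafwise symplectic forms into a global Poisson structure, I would invoke the standard principle: a smooth bivector field whose kernel is a regular integrable distribution and whose leafwise restriction is symplectic is automatically Poisson. The distribution $\ker(\eta)=\mathrm{span}(d\ol{\Theta}_1,\dots,d\ol{\Theta}_n)$ is regular and integrable on the smooth locus, and integrability is forced by the fact that $\ol{\Theta}$ is a submersion whose components Poisson-commute (coming from the abelian peripheral subgroups). Pulling back by $\hol$, which is a local diffeomorphism by Theorem~\ref{thm:holonomy}(a), then transports $\eta$ to $\Yy(S,x)(\La_-^\circ)\cong\Teich(S,x)\times\La_-^\circ(S,x)$. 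The main obstacle is the rigorous verification of closedness of the leafwise form uniformly on the smooth locus; the cleaner alternative is to compute the Schouten bracket $[\eta,\eta]$ directly by a Goldman-style argument, using $\Ad$-invariance of $\Bb$ and an explicit $1$-cocycle representation, in which the boundary contributions from $\dot{D}$ are exactly annihilated by the $H^0(\dot{D};\xi)$ factor rather than obstructing the Jacobi identity.
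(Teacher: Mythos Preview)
Your proposal is correct and aligned with the paper's approach. In fact, the paper does not supply a proof of the first assertion at all: it simply states that the result ``can be obtained adapting arguments from \cite{atiyah-bott:yang-mills}, \cite{goldman:symplectic} or \cite{karshon:algebraic}'' and then observes that the second part follows from $\hol$ being a local diffeomorphism (Theorem~\ref{thm:holonomy}(a)). Your sketch is therefore considerably more explicit than what the paper offers.

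A couple of minor remarks. Your ``standard principle'' is indeed valid here and can be justified in one line once you choose coordinates adapted to the foliation: since $\ker(\eta^\sharp)=\mathrm{span}(d\ol{\Theta}_1,\dots,d\ol{\Theta}_n)$, the bivector has components only along the leaf directions, so the Schouten bracket $[\eta,\eta]$ reduces to the leafwise one, which vanishes by closedness of the parabolic-cohomology symplectic form on each fiber. You do not need to first establish Jacobi in order to call the $\ol{\Theta}_j$ Casimirs; the condition $d\ol{\Theta}_j\in\ker(\eta^\sharp)$ already gives $\{\ol{\Theta}_j,-\}=0$ directly. Also, the alternative you mention at the end (computing $[\eta,\eta]$ via a Goldman-style cocycle argument) is closer in spirit to how Karshon \cite{karshon:algebraic} handles the boundary case algebraically, and would be an equally acceptable route.
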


The second part follows from the fact that $\hol$ is a local
diffeomorphism (Theorem~\ref{thm:holonomy}(a)).

As already investigated by Goldman \cite{goldman:symplectic} in the
case of closed surfaces, it is natural to explore the relation
between $\eta$ and the {\bf Weil-Petersson pairing},
which is defined as $\eta_{WP,\th}:=\mathrm{Im}(h^*_{WP})$, where
\[
h^*_{WP,\th}(\varphi,\psi):=-\frac{1}{4}\int_S g_\th^{-1}(\varphi,\ol{\psi}) 
\]
$g_\th^{-1}$ is the dual hyperbolic K\"ahler form on $S$ with angle
data $\th$ and $\varphi,\psi\in H^0(S,K_S^{\otimes 2}(x))$
are cotangent vectors to $\Teich(S,x)\cong \Yy(S,x)(\th)$ at $g$.

For angles smaller than $2\pi$, the Shimura isomorphism still holds.

\begin{theorem}\label{thm:eta}
If $\th\in \La_-(S,x)\cap(0,2\pi)^n$, then
\[
\eta_{WP,\th}=-\frac{1}{8}\eta\Big|_{\th}
\]
as dual symplectic forms on $\Yy(S,x)(\th)\cong\Teich(S,x)$.
\end{theorem}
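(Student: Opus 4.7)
The proof adapts Goldman's strategy \cite{goldman:symplectic} from closed surfaces to the case with conical points. The key tool is an Eichler--Shimura-type identification that realises cotangent vectors to $\Teich(S,x)$ as harmonic $\lieg$-valued $1$-forms on $\dot S$ whose cohomology classes lie in the parabolic subspace $H^1_P(\dot S;\xi)$ introduced in Section~\ref{sec:poisson}.

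First I would construct the Eichler--Shimura map. Given $\varphi\in H^0(S,K_S^{\otimes 2}(x))\cong T^*_g\Teich(S,x)$, the harmonic Beltrami differential $\mu_\varphi:=g_\th^{-1}\bar\varphi$ determines a first-order deformation of the complex structure and hence of $\rho=\hol(g)$; via the flat $\PSL_2(\R)$-connection associated with $g$, this deformation is represented by a closed $\lieg$-valued $1$-form $\omega_\varphi\in\Omega^1(\dot S;\xi)$, harmonic for the induced Hodge structure. When $\th_j<2\pi$ the local models for $g_\th$ and $\varphi$ make $\omega_\varphi$ square-integrable near $x_j$, so $[\omega_\varphi]$ lies in $H^1_P(\dot S;\xi)$ and corresponds, via Lefschetz duality with respect to $\Bb$, to a cotangent vector at $\rho$.

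Next, I would compute $\eta|_\th(\varphi,\psi)$ by inserting these harmonic representatives into the cup-product description of $\eta$. Hodge theory on $\dot S$ combined with the $\PSL_2(\R)$-invariant pairing $\Bb$ gives
\[
\eta|_\th(\varphi,\psi) = \int_S \Bb(\omega_\varphi\wedge\omega_\psi),
\]
and a local computation in a conformal coordinate $z$ with $g_\th=e^{2u}|dz|^2$ shows that $\Bb(\omega_\varphi\wedge\omega_\psi)$ is a universal constant times $\Ii\bigl(e^{-2u}\varphi\bar\psi\bigr)\,dA_{g_\th}$. Comparing this with
\[
\eta_{WP,\th}(\varphi,\psi)=\Ii\bigl(h^*_{WP,\th}(\varphi,\psi)\bigr) = -\tfrac14\,\Ii\!\int_S g_\th^{-1}(\varphi,\bar\psi)
\]
yields the proportionality factor $-\tfrac{1}{8}$.

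The main obstacle is the analytic behaviour near each conical point. The hypothesis $\th_j\in(0,2\pi)$ guarantees simultaneously that (i) $e^{-2u}\varphi\bar\psi$ is integrable, so $h^*_{WP,\th}$ is finite, (ii) $\omega_\varphi$ is $L^2$ and its class is Lefschetz-dual to itself with no residual contribution around $x_j$, and (iii) the harmonic-representative identification of the cup product with the integral above is valid (no boundary term from small circles around the $x_j$ in the limit). Each of these can be verified using the local models $g_\th\sim|z|^{2r_j-2}|dz|^2$ and $\varphi\sim c_j\,z^{-2}dz^2$, and each fails once $\th_j\geq 2\pi$: this is both the reason for the hypothesis of the theorem and the reason why the subsequent Corollary can only state the identity where both sides converge.
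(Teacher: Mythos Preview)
Your outline is Goldman's strategy, as is the paper's, but the executions differ. You propose to produce the $\xi$-valued representative of $\varphi$ via a harmonic Beltrami differential and $L^2$-Hodge theory on the conical surface; the paper instead writes down an explicit holomorphic section $\tau:=\b^{-1}\circ\dev^*(\Bb\sigma)\in H^0\bigl(S,T_S(\sum_j(r_j-1)x_j)\otimes\xi\bigr)$, where $\sigma:\lieg\to T_\H$ sends $\psl_2(\R)$ to invariant vector fields and $\b=d(\dev)$. One then checks by hand that $g_\th^{-1}=-\frac{i}{2}\Bb(\tau\wedge\ol{\tau})$ and that the dual of the forgetful map $\Pp_{adm}(S,x)\to\Teich(S,x)$ is $\varphi\mapsto\wti{\varphi\tau}\in H^1(\dot S,\dot D;\xi_\C)$. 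The crucial point---that $\varphi\tau$ admits a representative vanishing on $\dot D$---reduces to the vanishing of its residue at each $x_j$, which follows from the explicit local form of the $(E-F)$-component of $\tau$. This replaces your $L^2$/boundary-term analysis by a residue check in one coordinate chart and avoids having to set up Hodge theory for $\xi$ over conical singularities or to identify the relevant $L^2$-cohomology with $H^1_P(\dot S;\xi)$. Your route should also work, but carries that extra analytic overhead.

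One genuine slip in your local analysis: elements of $H^0(S,K_S^{\otimes 2}(x))$ have at most \emph{simple} poles at $x$, so the correct local model is $\varphi\sim c_j z_j^{-1}dz_j^2$, not $c_j z_j^{-2}dz_j^2$. With a double pole the Weil--Petersson integrand near $x_j$ would behave like $|z_j|^{-2-2r_j}$ and diverge for every $r_j>0$, so your check~(i) would fail outright. With the correct simple pole the integrand is $\sim|z_j|^{-2r_j}$, integrable precisely when $r_j<1$, i.e.\ $\th_j<2\pi$---exactly the threshold you want.
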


Schumacher-Trapani \cite{schumacher-trapani:cone} have also shown
that, if $\th\in(0,2\pi)^n$, then $\eta^*_{WP,\th}$ is a K\"ahler form
and that $\eta^*_{WP,\th}$ degenerates in the expected way as
some $\th_j\rar 2\pi$.

\begin{proof}[Proof of Theorem~\ref{thm:eta}]
Mimicking \cite{goldman:symplectic}, we consider the diagram
\[
\xymatrix{
& \xi=\dev^*\lieg \ar[d]^{\dev^*\sigma} \\
T_{\dot{S}} \ar[r]^{\b\quad} & \dev^* T_{\H}
}
\]
in which $\sigma:\lieg\rar T_{\H}$ maps
$\lieg$ to the $\mathrm{SL}_2(\R)$-invariant
vector fields of $\H$.

If $r_j=\th_j/2\pi>0$, then $\dev$ locally looks like
\[
\dev : \ z_j \mapsto i\,\frac{1-z_j^{r_j}}{1+z_j^{r_j}}
\]
up to action of $\PSL_2(\R)$
for some holomorphic local coordinate $z_j$ around $x_j$.
So
\[
\b:=d(\dev)=-\frac{2ir_j z_j^{r_j-1}}{(1+z_j^{r_j})^2}
\]

Moreover, if $w$ is the standard coordinate on
$\H=\{w=s+it\,|\,s,t\in\R,\ t>0\}$, then
\[
\Bb\sigma=\left(
\begin{array}{cc}
w & -w^2\\
1 & -w
\end{array}
\right)\frac{\pa}{\pa w}
\]
thus, around $z_j=0$ we have that
\begin{align*}
\tau:=\b^{-1}\circ\dev^*(\Bb\sigma)
& =
\Big[i(1-\frac{2z_j^{r_j}}{1+z_j^{r_j}})H
+(2-\frac{4z_j^{r_j}}{(1+z_j^{r_j})^2})(E+F)+\\
& \qquad\qquad -\frac{4z_j^{r_j}}{(1+z_j^{r_j})^2}(E-F)\Big]
\frac{i(1+z_j^{r_j})^2}{2r_j z_j^{r_j-1}}\frac{\pa}{\pa z_j}
\end{align*}
belongs to
$H^0(S,T_S(\sum_j(r_j-1)x_j)\otimes\xi)$.
Moreover, the dual K\"ahler form associated to the Poincar\'e
metric on $\H$
\[
g_{\H}^{-1}=t^2 \frac{\pa}{\pa s}\wedge \frac{\pa}{\pa t}=
-2it^2\frac{\pa}{\pa w}\wedge\frac{\pa}{\pa \ol{w}}
\]
can be recovered as $g_{\H}^{-1}=(i/2)\mathrm{Tr}(\Bb\sigma
\wedge\Bb\ol{\sigma})$, where
\[
\Bb\sigma\wedge\Bb\ol{\sigma}=
\left(
\begin{array}{cc}
|w|^2-w^2 & (w-\ol{w})|w|^2 \\
\ol{w}-w & |w|^2-\ol{w}^2
\end{array}
\right)
\frac{\pa}{\pa w}\wedge\frac{\pa}{\pa \ol{w}}
\]
Hence, $g_\th^{-1}=-(i/2) \Bb(\tau\wedge\ol{\tau})$.

As we can identify $T\Pp_{adm}(S,x)$ and $T\Rep(\gru,\PSL_2(\R))
\cong H^1(\dot{S};\xi)$
via $d\hol$, then the restriction of $p:\Pp(S,x)\rar\Teich(S,x)$
to $\Pp_{adm}(S,x)$ can be infinitesimally described as follows.
Given $\nu\in H^1(\dot{S};\xi)$, we can look at its restrictions
$\nu_j\in H^1(\dot{D}_j;\xi)$. If $\nu_j=0$, then $\nu$ does not
vary the angle $\th_j$ and so there is a representative for $\nu$
that vanishes on $\dot{D}_j$.
If $\nu_j\neq 0$, then
it can be represented by a Cech
$1$-cocycle with locally costant coefficients in $\xi$.
%
%consider the open subsets
%$U_{j,1}=\{z_j\in\dot{D}_j\,|\,\mathrm{arg}(z_j)\in(-\pi/4,5\pi/4)\}$
%and $U_{j,2}=\{z_j\in\dot{D}_j\,|\,\mathrm{arg}(z_j)\in(3\pi/4,9\pi/4)\}$,
%so that $U_{j,1}\cap U_{j,2}=U_{j,+}\cup U_{j,-}$,
%where $U_{j,+}=\{z_j\in\dot{D}_j\,|\,\mathrm{arg}(z_j)
%\in(-\pi/4,\pi/4)\}$ and $U_{j,-}=\{z_j\in\dot{D}_j\,|\,
%\mathrm{arg}(z_j)\in(3\pi/4,5\pi/4)\}$.
%Then $\nu_j$ is represented by the $1$-cocycle which vanishes
%on $U_{j,+}$ and is constantly $c(E-F)$ on $U_{j,-}$, where $c\in\R$.
%
As the $(E-F)$-component of $\tau$ is $\displaystyle
-\frac{2iz_j}{r_j}\frac{\pa}{\pa z_j}$, we conclude
that $\tau\nu_j$ has a representative that vanishes at $x_j$.
Hence, $\tau\nu$ has always a representative that vanishes at $x$,
whose class in $H^1(S,T_S(-x))$ will be denoted by $\wti{\tau\nu}$,
and $dp:T\Rep(\gru,\PSL_2(\R))\rar\Teich(S,x)$ incarnates into
\[
\xymatrix@R=0.1cm{
H^1(\dot{S};\xi)\ar[r] & H^1(S,T_S(-x)) \\
\nu \ar@{|->}[r] & \Bb(\wti{\tau\nu})
}
\]
which is the restriction to real projective structures of the map
$H^1(\dot{S};\xi_{\C})\lra H^1(S,T_S(-x))$ still given 
by $\nu\mapsto \Bb(\wti{\tau\nu})$.
Its dual is thus
\[
\xymatrix@R=0.1cm{
H^0(S,K_S^{\otimes 2}(x)) \ar[r] & H^1(\dot{S},\dot{D};\xi_{\C})\\
\varphi \ar@{|->}[r] &  \wti{\varphi\tau}
}
\]
where $\wti{\varphi\tau}$ can be
represented by $\xi_{\C}$-valued $1$-form
cohomologous to $\varphi\tau$, which vanishes on $\dot{D}$,
whose existence depends on the fact that no $\th_j\in 2\pi\N$
and so $\varphi\tau$ has no residue at $x$.
A similar formula holds for real projective structures.

Hence, it is easy now to see that, if all the terms
are convergent, then
\begin{align*}
h^*_{WP,\th} & =-\frac{1}{4}\int_S g_{\th}^{-1}(\tau,\ol{\psi})=
\frac{i}{8}\int_S\Bb(\varphi\tau\wedge\ol{\psi\tau})=
\frac{i}{8}\int_S\Bb(\wti{\varphi\tau}\wedge\ol{\wti{\psi\tau}})=\\
& =\frac{i}{8}[S]\cap\Bb(p^*(\varphi)\cup \ol{p^*(\psi)})
\end{align*}
As we are working with real projective structures, $\ol{\psi\tau}=\psi\tau$ and
this concludes the argument.
\end{proof}
%
%\begin{remark}
%As in \cite{goldman:symplectic},
%the same proof shows that, for $\th\in(0,2\pi)^n$,
%\[
%h^*_{WP,\th}=\frac{i}{8}\eta^{\C}\Big|_\th
%\]
%where $\eta^{\C}/2$ is the natural complex Poisson structure
%on the smooth locus of $\Rep(\gru,\PSL_2(\C))$,
%which is induced by the composition
%\[
%H^1(\dot{S};\xi^{\C})^*\cong
%H^1(\dot{S},\dot{D};\xi^{\C})
%\rar H^1(\dot{S};\xi^{\C})
%\]
%where $\xi^\C:=\dev^*(\psl_2(\C))$ and
%$\Bb^\C$ are the complexifications of $\xi$ and $\Bb$.
%\end{remark}

Notice that, as $\th_j>2\pi$ increases, the Weil-Petersson pairing on
$T\Yy(S,x)(\th)$ becomes more and more degenerate, the walls being given
exactly by $\th_j\in 2\pi\N$.
However, the above proof also yields the following.

\begin{corollary}
If $\th\in\La_-(S,x)$ and $\varphi,\psi\in T^*\Teich(S,x)$, then
\[
\eta_{WP,\th}(\varphi,\psi)=\frac{1}{8}\eta\Big|_{\th}(\varphi,\psi)
\]
whenever both hand-sides are convergent.
\end{corollary}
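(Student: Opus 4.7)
The plan is to revisit the proof of Theorem~\ref{thm:eta} and track precisely where the restriction $\th\in(0,2\pi)^n$ was used. That proof exhibited the chain of equalities
\[
h^*_{WP,\th}(\varphi,\psi) = -\tfrac{1}{4}\!\int_S g_\th^{-1}(\varphi,\ol{\psi}) = \tfrac{i}{8}\!\int_S\Bb(\varphi\tau\wedge\ol{\psi\tau}) = \tfrac{i}{8}\!\int_S\Bb(\wti{\varphi\tau}\wedge\ol{\wti{\psi\tau}}) = \tfrac{i}{8}[S]\cap\Bb(p^*\varphi\cup\ol{p^*\psi}),
\]
whose imaginary part is $\tfrac{1}{8}\eta\big|_\th(\varphi,\psi)$. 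The goal is to argue that each equality in this chain survives whenever both ends of it are finite.

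The first two equalities are pointwise identities on $\dot{S}$ coming from $g_{\H}^{-1}=(i/2)\tr(\Bb\sigma\wedge\Bb\ol{\sigma})$ together with the definition $\tau=\b^{-1}\circ\dev^*(\Bb\sigma)$, so they survive unchanged: the first and second integrals converge or diverge together. The last equality is the cohomological Poincar\'e--Lefschetz pairing, well-defined and computing $\tfrac{1}{8}\eta\big|_\th$ as soon as the class $p^*\varphi\in H^1(\dot{S},\dot{D};\xi_{\C})$ admits the representative $\wti{\varphi\tau}$ vanishing on $\dot{D}$. Thus both endpoints make sense precisely when $\th_j\notin 2\pi\N_+$ for every $j$ (so that the right-hand side is defined) and when the Weil--Petersson integral converges (so that the left-hand side is defined).

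The substantive step is the middle equality, namely replacing $\varphi\tau$ by $\wti{\varphi\tau}$ inside the integral. As in the proof of Theorem~\ref{thm:eta}, this replacement is legitimate exactly when the form $\varphi\tau$ has vanishing residue at each $x_j$, a condition which the local expansion of $\tau$ shows to be equivalent to $\th_j\notin 2\pi\N_+$. Writing $\varphi\tau-\wti{\varphi\tau}=d\mu$ on a punctured neighborhood of $x$, I would evaluate the difference of the two integrals via Stokes' theorem as the limit of circular boundary contributions $\oint_{\partial D_{j,\d}}\Bb(\mu\wedge\ol{\psi\tau})$ as $\d\to 0$. The main obstacle, and the point where the assumption of finiteness on both sides is truly used, is verifying that these boundary terms vanish in the limit: this requires controlling the growth of $\mu$ and $\psi\tau$ near $x_j$ using the explicit local form of $\tau$ together with the residue-free hypothesis. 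Once this is dispatched the chain closes, and extracting the imaginary part produces the stated identity.
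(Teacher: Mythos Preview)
Your proposal is correct and follows the paper's own approach. The paper offers no separate argument for the corollary: it simply observes that the chain of equalities displayed in the proof of Theorem~\ref{thm:eta} was already prefaced by the clause ``if all the terms are convergent,'' so the conditional statement is built into that proof. Your write-up elaborates this with the Stokes-type justification for the passage from $\varphi\tau$ to $\wti{\varphi\tau}$, which is more detail than the paper supplies, but the underlying route is identical.
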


%%%%%%%%%%%%%%%%%%%%%%%%%%%%%%%%%%%%%%%%%%%%%%%%%%%%%%%%%%%%%%%%%%%%%%%
\section{Decorated hyperbolic surfaces}\label{sec:decorated}

Let $\th_{max}=
\mathrm{max}\{\th_1,\dots,\th_n\}$ and
recall the collar lemma for hyperbolic surfaces with conical points.

\begin{lemma}[Dryden-Parlier \cite{dryden-parlier:cone}]\label{lemma:collar}
If $\th\in\La_{sm,-}(S,x)$, then
there exists $R\in(0,1]$ which depends only on $\th_{max}<\pi$ such that,
for every hyperbolic metric $g$ on $S$ with angles $\th$ at $x$,
the balls $B_j$ centered at $x_j$ with circumference $\leq R$
are disjoint and do not meet any simple closed geodesic.
\end{lemma}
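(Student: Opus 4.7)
My plan is to follow Dryden-Parlier, reducing to local hyperbolic trigonometry at each cone point and using the principle that a cone of angle $\th_j<\pi$ behaves like a short closed geodesic and carries an embedded collar whose width depends only on $\th_j$. Near $x_j$, the cone metric is locally modelled by a sector $\Sigma_{\th_j}\subset\H^2$ of angle $\th_j$ at a vertex $\ti{x}_j$, with its two sides identified by the rotation $\rho_j$ of angle $\th_j$ about $\ti{x}_j$ (which equals $\hol(\g_j)$); a hyperbolic ball of radius $r$ around $x_j$ then has circumference $\th_j\sinh r$, so the condition ``circumference $\leq R$'' translates to $r\leq\mathrm{arcsinh}(R/\th_j)$.

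To rule out that a simple closed geodesic $\g$ meets $B_j$, let $\alpha\in\PSL_2(\R)$ be its hyperbolic holonomy and $A\subset\H^2$ the axis of $\alpha$. An intersection $\g\cap B_j\neq\emptyset$ gives $d(\ti{x}_j,A)\leq r$; since $\rho_j$ is elliptic fixing only $\ti{x}_j$ and $A$ does not pass through $\ti{x}_j$, the axis $\rho_j(A)$ of the conjugate $\rho_j\alpha\rho_j^{-1}$ is distinct from $A$ and lies at the same distance from $\ti{x}_j$. The two axes are symmetric about the bisector at $\ti{x}_j$, so by Lobachevsky's angle-of-parallelism relation $\cos\Pi(d)=\tanh d$ they intersect transversely whenever
\[
\tanh r<\cos(\th_j/2),\qquad\text{equivalently,}\qquad R<\th_j\cot(\th_j/2).
\]
Since $\alpha$ and $\rho_j\alpha\rho_j^{-1}$ are non-commuting hyperbolic isometries, $A$ and $\rho_j(A)$ correspond to distinct lifts of $\g$ in the universal cover of $\dot S$, and their intersection projects to a self-intersection of $\g$, contradicting simplicity.

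Pairwise disjointness is handled analogously. If $B_i\cap B_j\neq\emptyset$, there is a geodesic arc $\s\subset\dot S$ joining $x_i$ to $x_j$ of length $<r_i+r_j$; considering $\s$ together with its rotated image $\rho_i(\s)$ (a distinct geodesic from $\ti{x}_i$ to a different lift of $x_j$) and applying Gauss-Bonnet to the resulting hyperbolic triangle at $\ti{x}_i$ with apex angle $\th_i<\pi$, one obtains a lower bound $\ell(\s)\geq\ell^*(\th_i,\th_j)>0$ that is continuous on $[0,\th_{max}]^2$. Setting $R:=\min\{1,\,\th_{max}\cot(\th_{max}/2),\,\inf\th_j\sinh\ell^*\}$ then yields a positive constant depending only on $\th_{max}<\pi$, since $\th\cot(\th/2)$ is continuous and strictly positive on $[0,\pi)$. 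The main obstacle is the disjointness estimate: one must verify existence (and effective uniqueness) of the short arc $\s$ under the overlap assumption and carefully track the trigonometry of the resulting triangle, which is subtler than the argument for simple closed geodesics because two distinct cone rotations interact.
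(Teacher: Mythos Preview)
The paper does not give its own proof of this lemma; it simply quotes the result from Dryden--Parlier. So there is nothing in the paper to compare your argument against, and the only question is whether your sketch stands on its own.

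There is a genuine gap in the ``no simple closed geodesic'' half. You correctly observe that the axis $A$ of $\alpha=\hol(\gamma)$ satisfies $d(\ti{x}_j,A)\le r$ and that $A$ and $\rho_j(A)$ meet in $\H$ once $\tanh r<\cos(\th_j/2)$. The problem is the next step: you write that ``their intersection projects to a self-intersection of $\gamma$'', but $\H$ is not a cover of $\dot S$. The developing map goes $\wti{\dot S}\to\H$, and since the holonomy is typically non-discrete it is neither injective nor a covering; an intersection of the developed images $A,\rho_j(A)$ in $\H$ does not by itself force the lifts $\ti\gamma$ and $\gamma_j\cdot\ti\gamma$ to meet in $\wti{\dot S}$. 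What one actually needs is that the crossing point---which sits at distance $\rho^\ast=\mathrm{arctanh}\bigl(\tanh d/\cos(\th_j/2)\bigr)$ from the cone vertex---lies inside an \emph{embedded} conical ball in $S$, so that the local cone model is valid out to radius $\rho^\ast$. Since you only know $d<r$, this requires an a priori lower bound on the injectivity radius at $x_j$ depending only on $\th_{\max}$: a bound on $d(x_i,x_j)$ for $i\neq j$ and on the shortest geodesic loop based at $x_j$. That is exactly the estimate you flag as ``the main obstacle'' and do not actually establish, so the two halves of the lemma are not independent in your scheme---the geodesic half presupposes the injectivity-radius half.

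The disjointness sketch has the same defect. Rotating $\s$ by $\rho_i$ is an operation in $\H$, not in $S$; the ``different lift of $x_j$'' and the ``resulting hyperbolic triangle at $\ti{x}_i$'' live in the developed picture, and without discreteness of the holonomy there is no reason the triangle corresponds to a region of the surface to which Gauss--Bonnet applies. A correct argument (as in Dryden--Parlier) works intrinsically in $(S,g)$---which is a genuine $\mathrm{CAT}(-1)$ space when all cone angles are below $2\pi$---rather than via the holonomy image in $\H$.
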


We call such balls $B_j$ {\it small}.
The following definition is inspired by Penner \cite{penner:decorated},
who first introduced decorated hyperbolic surfaces with cusps.
Notice that a class in $\Yy(S,x)$
will be usually represented by an admissible metric of curvature $-1$.

\begin{definition}
A {\bf decoration} for a hyperbolic surface $(S,x)$ with small angle data $\th$
is the choice of small balls $B_1,\dots,B_n$ (not all reduced to a point);
equivalently, of the nonzero vector $\e=(\e_1,\dots,\e_n)\in [0,R)^n$
of their circumferences.
\end{definition}

\begin{remark}\label{rmk:dec}
Notice that a hyperbolic surface $S$ with small angles $\th$
can be given a {\it standard decoration} by letting $B_j$ to be
the ball of radius $s(\th)=\cosh^{-1}(1/\sin(\th_{max}/2))/2$.
The constant is chosen in such a way that the area of
$B:=B_1\cup\dots\cup B_n$
is bounded from below (by a positive constant) for all hyperbolic
structures on $S$ (with angle $\th$).
The circumference of $B_j$ is clearly $s(\th)\th_j$.
\end{remark}

Thus, the assignment of $[s(\th)\th]$ defines
a map $\Yy(S,x)\setminus\Theta^{-1}(0)\lra \mathbb{P}(\R_{\geq 0}^n)$.
The closure of its graph identifies to the real-oriented
blow-up $\Bl_0\Yy(S,x)$
and the exceptional divisor $\Theta^{-1}(0)\times\mathbb{P}(\R_{\geq 0}^n)$
can be understood as the space of hyperbolic metrics with
cusps on $\dot{S}$ (up to isotopy) together with a
{\bf projective decoration} $[\e]\in\mathbb{P}(\R_{\geq 0}^n)$,
which plays the role of infinitesimal angle datum.
Clearly, a projective decoration $[\e]$ is canonically represented
by the {\bf normalized decoration} $\e$
in its class, obtained by prescribing
$\e_1+\dots+\e_n=1$; so we can identify $\mathbb{P}(\R_{\geq 0}^n)$
with $\Delta^{n-1}$.

Thus, the map $\Theta$ lifts to $\wh{\Theta}:\Bl_0\Yy(S,x)
\lra \Delta^{n-1}\times[0,2\pi(2g-2+n)]$.
We remark that a similar projective decoration arises in
\cite{mondello:WP} as infinitesimal boundary length datum.

%The need of a decoration for those $x_j$'s for which $\th_j=0$
%also arises in the flat case.
%In fact, if $\th_j>0$ for all $j$, then a flat metric $g\in\Yy(S,x)(\La_0)$
%admits a natural normalization
%$\hat{g}\in\Yhat(S,x)(\La_0)$ by imposing that $\mathrm{Area}(g)=1$.
%However, if $\th_j=0$, then the area of any such $\hat{g}$ diverges.
%
%
%
%When $\chi(\dot{S},\th)=0$ and $\th_j=0$, the analogue of $\e_j$
%would be played by the circumference of the semi-infinite flat
%cylinder at $x_j$.
%
%
%%%%%%%%%%%%%%%%%%%%%%%%%%%%%%%%%%%%%%%%%%%%%%%%%%%%%%%%%%%%%%%%%%%%%%%%
\section{Arcs}\label{sec:arcs}

Given a pointed surface $(S,x)$, we call {\bf arc} the image
$\a=f(I)$ of a continuous $f:(I,\pa I)\rar(S,x)$, in which $I=[0,1]$ and $f$ 
injectively maps $\mathring{I}$ into $\dot{S}$.
Let $\Arc_0(S,x)$ be the space of arcs with the compact-open topology
and let $\Arc_n(S,x)$ be the subset of $\Arc_0(S,x)^{(n+1)}$ consisting
of unordered pairwise non-homotopic (relative to $x$) $(n+1)$-tuple of arcs
$\ua=\{\a_0,\dots,\a_{n}\}$ such that $\a_i\cap\a_j\subset x$ for $i\neq j$.

\begin{remark}
Equivalently, we could have defined $\Arc'_0(S,x)$ to be the space
of unoriented simple closed free loops $\g$ in $S\setminus x$ which are
homotopy equivalent to an arc $\a$ (i.e. such that $\g=\pa U_{\a}$,
where $U$ is a tubular neighbourhood of $\a$). We could have
defined $\Arc'_n(S,x)$ analogously. Clearly, $\Arc'_n(S,x)\simeq \Arc_n(S,x)$.
We will also say that $\a_1,\a_2\in\Arc_0(S,x)$ are homotopic
{\it as arcs} if they belong to the same connected component.
\end{remark}

Notice that each $\Arc_n(S,x)$ is contractible, because
$\chi(\dot{S})<0$.

\begin{definition}
A {\bf $(k+1)$-arc system} is an element of $\Af_k(S,x):=\pi_0(\Arc_k(S,x))$.
A {\bf triangulation} is a maximal system of arcs $\ua\in\Af_{N-1}(S,x)$,
where $N=6g-6+3n$.
\end{definition}

Notice that, if $\ua=\{\a_i\}$ is a triangulation,
then its {\bf complement} $S\setminus\ua:=S\setminus\bigcup_i \a_i$
is a disjoint union of triangles.

\begin{lemma}\label{lemma:geodesic}
Let $\a_i$ be an arc and $g$
be a $\th$-admissible metric on $(S,x)$.
\begin{itemize}
\item[(1)]
There exist a geodesic $\hat{\a}_i\subset S$ and a homotopy
$\a_i(t):I\rar S$ with
fixed endpoints such that $\a_i(0)=\a_i$, $\a_i(1)=\hat{\a}_i$ and
$\mathrm{int}(\a_i(t))\cap x=\emptyset$ for $t\in[0,1)$
and $\mathrm{int}(\a_i(1))\cap x$
can only contain points $x_j$ such that $\th_j\geq\pi$.
\item[(2)]
If two geodesic arcs $\hat{\a}_i$ and $\hat{\a}'_i$
are homotopic as arcs, then they are equal.
\item[(3)]
If all $\th_j <\pi$, then for each $\a_i$
there exists exactly one smooth geodesic $\hat{\a}_i$
homotopic to $\a_i$ as an arc.
\end{itemize}
\end{lemma}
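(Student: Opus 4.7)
The plan is to prove (1) by minimizing length in the homotopy class of $\a_i$ inside $\Arc_0(S,x)$, to prove (2) by a Gauss--Bonnet calculation on the bigon formed by two homotopic geodesic arcs, and to deduce (3) from these. For (1), set $L:=\inf\{\ell(\g)\,:\,\g\in\Arc_0(S,x),\ \g\sim\a_i\}$. A minimizing sequence $\g_n$, reparametrized by arc-length and rescaled to $I$, is uniformly Lipschitz, so Arzel\`a--Ascoli produces a uniformly convergent subsequence whose limit $\hat{\a}_i\colon I\to S$ realizes $L$ by lower semicontinuity of length. On the open subset $\hat{\a}_i^{-1}(\dot{S})\subset I$ the first-variation formula makes $\hat{\a}_i$ a smooth geodesic. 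The required homotopy $\a_i(t)$ is built by reparametrizing and concatenating $\a_i$ with successive $\g_n$, keeping intermediate curves in $\Arc_0(S,x)$ with fixed endpoints and letting $\a_i(t)\to\hat{\a}_i$ uniformly as $t\to 1$.

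The heart of (1) is the local analysis at an interior hit $\hat{\a}_i(t_0)=x_j$, $t_0\in\mathrm{int}(I)$. There $\hat{\a}_i$ arrives at $x_j$ with a tangent direction $v_{in}$ and leaves with $v_{out}$, splitting the unit tangent circle at $x_j$ (of total length $\th_j$) into two sectors of angles $\mu$ and $\th_j-\mu$. Since the $\g_n$ avoid $x_j$ and are homotopic to $\a_i$, uniform convergence pins them (for $n$ large) to one fixed sector near $t_0$; call $\mu$ its angle. Unfolding the $\mu$-sector to a model hyperbolic sector with apex $x_j$, the points $\hat{\a}_i(t_0\pm\e)$ sit at distance $\e$ from the apex, and $\hat{\a}_i$ contributes $2\e$ of length near $x_j$. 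If $\mu<\pi$, the Euclidean-type chord joining these two points in the unfolded sector avoids the apex, remains on the required side, and has length $2\e\sin(\mu/2)<2\e$; splicing it in place of the through-$x_j$ segment of $\hat{\a}_i$ yields a curve $\hat{\a}_i'$ of length $L-2\e(1-\sin(\mu/2))<L$, homotopic to the $\g_n$ (hence to $\a_i$) and lying in $\Arc_0(S,x)$, contradicting minimality. Hence $\mu\geq\pi$, and so $\th_j\geq\pi$.

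For (2), suppose $\hat{\a}$ and $\hat{\a}'$ are smooth geodesic arcs with common endpoints $x_h,x_k$ that are homotopic as arcs. A homotopy inside $\Arc_0(S,x)$ realizes $\hat{\a}\cup(\hat{\a}')^{-1}$ as the boundary of an immersed disc $D\subset S$; its interior must lie in $\dot{S}$, for a cone point $x_l\in\mathrm{int}(D)$ would render the loop $\hat{\a}\cdot(\hat{\a}')^{-1}$ freely homotopic to a small loop around $x_l$, which is nontrivial in $\pi_1(\dot{S},x_h)$ and obstructs homotopy in $\Arc_0(S,x)$. Gauss--Bonnet on $D$, with curvature $-1$, vanishing geodesic curvature on the two geodesic sides, and interior vertex angles $\b_h,\b_k$ at $x_h,x_k$, gives
\[
-\mathrm{Area}(D)+(\pi-\b_h)+(\pi-\b_k)=2\pi,
\]
whence $\mathrm{Area}(D)+\b_h+\b_k=0$; each non-negative summand must vanish, so $D$ is degenerate and $\hat{\a}=\hat{\a}'$. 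Part (3) follows: if every $\th_j<\pi$, part (1) forbids $\hat{\a}_i$ from hitting $x$ in its interior, so $\hat{\a}_i\in\Arc_0(S,x)$, and (2) supplies uniqueness. The main obstacle I anticipate is the sector argument in (1): one must verify that the $\g_n$ really do pick a definite side at each interior hit, that the chord substitution lands in the correct homotopy class, and that the set of interior hits cannot accumulate (otherwise the splicing argument must be iterated); the rest is routine Arzel\`a--Ascoli, first-variation, and Gauss--Bonnet.
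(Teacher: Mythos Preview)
Your argument for (1) and (3) is exactly the paper's: take a length-minimizing sequence, extract a limit by Arzel\`a--Ascoli, and then analyse the local geometry at a conical point. The sector-shortcut you spell out is precisely what the paper means by ``one immediately concludes by looking at the geometry of a conical point,'' and the worries you raise at the end are all routine: the homotopy class fixes the winding sector once and for all, and interior hits are isolated since near each one the minimizer is a pair of radial segments.

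For (2) the paper just says ``nonpositivity of the curvature.'' The intended reading is that the metric completion of the universal cover $\widetilde{\dot S}$ is $\mathrm{CAT}(\kappa)$ for $\kappa\le 0$ (each added point over $x_j$ is an infinite-angle cone, hence locally $\mathrm{CAT}(\kappa)$), and geodesics between two points in a $\mathrm{CAT}(0)$ space are unique. Your Gauss--Bonnet bigon is the same idea in different clothing, but as written there is a gap. The homotopy furnishes only a \emph{continuous} map from a disc, not an immersion, so you cannot integrate curvature over it; and your reason for $\mathrm{int}(D)\subset\dot S$ is backwards (it holds by construction of the homotopy through arcs, not by the loop-around-$x_l$ argument, which in any case presupposes that $D$ is embedded in $S$ and bounded by the full arcs, neither of which you have established). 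The fix is to lift both geodesic arcs to the completion $\overline{\widetilde{\dot S}}$: there they share endpoints, the space is simply connected with nonpositive curvature and no finite-angle cone points, and either they coincide or they bound an embedded bigon (cutting at an innermost crossing if necessary) on which your computation $\mathrm{Area}(D)+\beta_h+\beta_k=0$ goes through honestly.
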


The second assertion is a consequence of the nonpositivity of the curvature and
(3) follows from (1) and (2).
To prove (1), one takes a minimizing sequence in the homotopy
class of $\a_i$ and a limit $\hat{\a}_i$ of such a sequence
($S$ is compact). One immediately concludes by
looking at the geometry of a conical point. Whether or not
the (possibly broken)
geodesic $\hat{\a}_i$ obtained in (1) is an arc, we will still say
by abuse of notation that $\hat{\a}_i$ is
{\it the unique geodesic homotopic to $\a_i$}.

\begin{definition}
An arc $\a_i$ on $(S,x)$ is {\bf compatible} with the metric $g$
if there exists a smooth geodesic $\hat{\a}_i$, which is homotopic to $\a_i$
as arcs.
\end{definition}

Let $p\in\a^\circ_i\subset \dot{S}$ and let $\g_b,\g_c\in\pi_1(\dot{S},p)$
be loops that wind around $x_b,x_c$ such that $\g_b\ast\g_c$ corresponds
to $\a_i$.
If $\dev:\ti{\dot{S}}\rar\Omega$ is the developing map
(where $\Omega=\H,\C$), then
call $\ti{x}_b,\ti{x}_c$ the endpoints of $\ti{\a}_i:=\dev(\a'_i)$,
where $\a'_i$ is a lift of $\a_i$ to $\ti{\dot{S}}$.

\begin{definition}
The {\bf $a$-length} associated to an arc $\a_i$ is the
function $a_i:\Yhat(S,x)\lra[0,\infty]$ defined as the
distance between $\ti{x}_b$ and $\ti{x}_c$.
\end{definition}

\begin{remark}\label{rmk:a-lengths}
Notice that, if the angles at $x_b$ and $x_c$ are not integral
multiples of $2\pi$, then $\ti{x}_b$ and $\ti{x}_c$ are the unique fixed
points in $\H$ of $\Hol(g)(\g_b)$ and $\Hol(g)(\g_c)$.
Hence, Lemma~\ref{lemma:trig}(a) and Lemma~\ref{lemma:trig-flat}(a)
ensure that $a_i$ is real-analytic around $g$, where $a_i>0$
(i.e. where $\ti{x}_b\neq \ti{x}_c$).
Moreover, if $\a_i$ is compatible with $g$,
then $a_i(g)$ is the $g$-length of $\hat{\a}_i$.
In general, the length $\hat{a}_i$ of the (broken) geodesic
$\hat{\a}_i$ homotopic to $\a_i$ is positive and piecewise real-analytic:
in fact, $\hat{\a}_i$ is locally equal to the join of
finitely many smooth geodesic arcs $\hat{\a}_{i_1}\ast\dots\ast
\hat{\a}_{i_k}$ and so $\hat{a}_i=a_{i_1}+\dots+a_{i_k}$.
\end{remark}

Given a triangulation $\ua$,
the $a$-lengths associated to the unique
hyperbolic metric define a map
\[
\ell_{\ua}: \Yy(S,x)\lra \Bl_0 [0,\infty]^N
\]
where the infinitesimal $a$-lengths $\Delta^{N-1}$ arise in particular
when the surface becomes flat.

If $(S,x,B)$ is a surface with hyperbolic metric $g$,
small $\th$ and a normalized decoration $B$,
then we can define the {\bf reduced $a$-length} of an $\a_i$
that joins $x_b$ and $x_c$
to be $\tilde{a}_i:=a_i-(\e_b+\e_c)$, where $\e_b,\e_c$ are the radii
of $B_b,B_c$.
If $\a_i$ is compatible with $g$,
then $\tilde{a}_i=\ell_{\hat{\a}_i\setminus B}$.
Because of the standard decoration
mentioned in Remark~\ref{rmk:dec} for metrics with small angles,
the reduced $a$-lengths can be extended to an open neighbourhood of
$\wh{\Theta}^{-1}(0)$.

\begin{definition}
A triangulation $\ua$ of $(S,x)$ is {\bf adapted} to the $\th$-admissible
metric $g\in\Bl_0\Yy(S,x)$ if:
\begin{itemize}
\item[(a)]
every $\a_i\in\ua$ is compatible with $g$;
%\item[(b)]
%for every triangle in $S\setminus\ua$ bounded
%by $(\a_i,\a_j,\a_k)$, we have $a_x<\a_y+\a_z$ for every $\{x,y,z\}=
%\{i,j,k\}$;
\item[(b)]
if $\th\neq 0$, then there is only one directed arc in $\ua$ outgoing
from each cusp (resp. from each cylinder, if $\chi(\dot{S},\th)=0$);
\item[(c)]
if $\th=0$ and $[\e]$ is the projective decoration, then
there is only one directed arc in $\ua$ outgoing from those $x_j$
with $\e_j=0$.
\end{itemize}
\end{definition}

We remark that, if $\th\in[0,\pi)^n$, then the compatibility
condition (a) is automatically satisfied. The utility of
adapted triangulations relies on the following result,
which directly follows from the above considerations.

\begin{proposition}\label{prop:coord-holonomy}
Let $\ua$ be triangulation adapted to $g\in \Yy(S,x)\setminus\Theta^{-1}(0)$
(resp. $(g,[\e])\in\Theta^{-1}(0)\subset\Bl_0\Yy(S,x)$) and suppose
that $\th_j\notin 2\pi\N_+$, where $\th=\Theta(g)$.
\begin{itemize}
\item[(1a)]
If $0\neq \th\in\La_-(S,x)$, then
$a_i=\ell_{\a_i}$ is a real-analytic function of
$\Hol(g) \in\Rep(\gru,\PSL_2(\R))$ in a neighbourhood of $g$.
\item[(1b)]
If $0\neq\th\in \La_0(S,x)$, then
$\displaystyle \frac{a_i}{a_j}=
\frac{\ell_{\a_i}}{\ell_{\a_j}}$ is a real-analytic function of
$\Hol(g) \in\Rep(\gru,\SE_2(\R))/\R_+$ in a neighbourhood of
$g\in\Theta^{-1}(\La_0)$.
\item[(2)]
If $\th=0$, then $\ti{a}_i=\tilde{\ell}_{\a_i}$
is a real-analytic function of $\Hol(g)\in\Rep(\gru,\PSL_2(\R))$
and $[\e]$ in a neighbourhood of $(g,[\e])\in\Theta^{-1}(0)$.
\end{itemize}
\end{proposition}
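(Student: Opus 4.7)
The plan is to exploit the observation from Remark~\ref{rmk:a-lengths}: whenever $\th_j\notin 2\pi\N_+$, the lift $\ti{x}_j$ to the universal cover is the unique fixed point of the holonomy of a small loop $\g_j$ around $x_j$, so one can read the endpoints of a lifted developed arc directly from the representation. Concretely, for each arc $\a_i\in\ua$ joining $x_b$ to $x_c$, pick $p\in\a_i^\circ$ and loops $\g_b,\g_c\in\pi_1(\dot{S},p)$ winding around $x_b,x_c$ such that $\g_b\ast\g_c$ is freely homotopic to the loop $\pa U_{\a_i}$ (equivalently, $\a_i$ is the arc associated to this loop). A lift $\a_i'\subset\ti{\dot{S}}$ develops to a path in $\Omega$ whose endpoints are fixed by the holonomies $\rho(\g_b),\rho(\g_c)$; the $a$-length is then the intrinsic distance between these endpoints. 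Hence the task reduces to showing that (i) the fixed points depend real-analytically on the holonomy, and (ii) the relevant distance function is real-analytic where it does not degenerate.

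For case (1a), the hypothesis $\th_j\notin 2\pi\N_+$ forces $\rho(\g_b),\rho(\g_c)$ to be elliptic elements of $\PSL_2(\R)$ that are not the identity. Any such element $A$ has a unique fixed point in $\H$, which can be written in closed form from the entries of any lift of $A$ to $\SL_2(\R)$; in particular it is a real-analytic function of $A$ on the open locus of elliptic elements. Thus $\dev(\ti{x}_b),\dev(\ti{x}_c)$ vary real-analytically with $\rho$ in a neighbourhood of $\Hol(g)$. The hyperbolic distance function $d_\H$ is real-analytic off the diagonal; compatibility of $\a_i$ with $g$ ensures $a_i(g)=\ell_{\hat{\a}_i}>0$, so the two endpoints are distinct and remain so on a neighbourhood. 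The trigonometric identities quoted as Lemma~\ref{lemma:trig}(a) will actually give an explicit formula for $a_i$ in terms of the matrix entries.

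Case (1b) follows the same template after replacing $(\H,\PSL_2(\R))$ by $(\R^2,\SE_2(\R))$. Since no $\th_j\in 2\pi\N_+$ and $\th\neq 0$, each $\rho(\g_j)$ is a nontrivial Euclidean rotation with a unique fixed point in $\R^2$, depending real-analytically on the matrix. The Euclidean distance between distinct fixed points is real-analytic in them, so each $a_i$ is real-analytic on the preimage in $\mathrm{Hom}(\gru,\SE_2(\R))$ of a neighbourhood of $[\rho]$. Dilating by $\l\in\R_+$ multiplies every such distance by $\l$; therefore the individual $a_i$ descend to the quotient only projectively, but any ratio $a_i/a_j$ is well-defined and real-analytic on $\Rep(\gru,\SE_2(\R))/\R_+$ in a neighbourhood of $\Hol(g)$.

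The main obstacle is case (2). Here the representative metric has cusps, so $\rho(\g_j)$ is parabolic and its unique fixed point $\xi_j$ lies on $\pa\H$, giving an infinite distance between developed endpoints. The projective decoration $[\e]\in\Delta^{n-1}$ enters precisely to cure this. For each parabolic $\rho(\g_j)$ with fixed point $\xi_j$, the choice of circumference $\e_j>0$ determines a horocycle $\wti{B}_j$ based at $\xi_j$ (for instance via the Busemann function, whose level sets are real-analytic in $\xi_j$ and $\e_j$); the reduced $a$-length is then the signed hyperbolic distance along the geodesic $\overline{\xi_b\xi_c}$ between $\wti{B}_b$ and $\wti{B}_c$, which is again real-analytic in $(\xi_b,\xi_c,\e_b,\e_c)$ as long as $\xi_b\neq\xi_c$. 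Two technical points deserve attention: one must verify that $\xi_j$ depends real-analytically on $\rho$ near a parabolic element (done by noting that $\xi_j$ is the unique root of a quadratic in the matrix entries, which is a simple root for non-identity parabolics) and that the normalization condition $\e_1+\dots+\e_n=1$ lets the decoration vary real-analytically on $\Delta^{n-1}$ even when some $\e_j=0$; the adaptedness condition (c) guarantees that arcs outgoing from such points are unique, so the formula for $\tilde{a}_i$ does not require $\e_j>0$ at both endpoints to be well-defined on the blow-up $\Bl_0\Yy(S,x)$.
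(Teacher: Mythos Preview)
Your proposal is correct and follows essentially the same route as the paper. The paper does not give an explicit proof of this proposition; it simply states that the result ``directly follows from the above considerations'', meaning Remark~\ref{rmk:a-lengths} together with the trigonometric identities of Lemma~\ref{lemma:trig}(a) and Lemma~\ref{lemma:trig-flat}(a). You have unpacked exactly these considerations: the fixed points of the elliptic (resp.\ parabolic, resp.\ Euclidean-rotation) holonomies around the punctures are real-analytic functions of the representation because $\th_j\notin 2\pi\N_+$, and the $a$-length is then a real-analytic distance between two such fixed points (finite and positive because $\ua$ is adapted).

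One small inaccuracy in your case~(2): for a non-identity parabolic the boundary fixed point is a \emph{double} root of the fixed-point quadratic, not a simple one. This does not affect your conclusion, since the repeated root $\xi_j=(a-d)/(2c)$ (in a chart where $c\neq 0$) is still manifestly real-analytic in the matrix entries; it is worth phrasing it that way rather than invoking simplicity. Your handling of the decoration via horocycles and the remark about condition~(c) when some $\e_j=0$ is in the spirit of the paper's extension of reduced $a$-lengths to a neighbourhood of $\wh{\Theta}^{-1}(0)$, though neither you nor the paper make the transition from small positive $\th$ to $\th=0$ fully explicit.
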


Because hyperbolic (resp. Euclidean)
triangles are characterized by the lengths of their
edges (resp. by the projectivization of the Euclidean
lengths of their edges),
it is thus clear that the holonomy together with an adapted
triangulation allow to reconstruct the full geometry of the surface.

\begin{corollary}\label{cor:adapted-coordinates}
Let $\ua$ be a triangulation on $(S,x)$.
\begin{itemize}
\item[(1a)]
If $\ua$ is adapted to $g\in\Yy(S,x)(\La_-)\setminus
\Theta^{-1}(0)$,
then $\ell_{\ua}$
is a local system of real-analytic coordinates on $\Yy(S,x)$ around $g$.
\item[(1b)]
If $\ua$ is adapted to $g\in\Yy(S,x)(\La_0)$, then
$\displaystyle\left\{
\frac{\ell_{\a_i}}{\ell_{\a_1}}\,\Big|\,i=2,\dots,N\right\}
\cup\{\chi(\dot{S},\th)\}$
is a local system of coordinates on $\Yy(S,x)$ around $g$.
\item[(2)]
If $\ua$ is adapted to $(g,[\e])\in\wh{\Theta}^{-1}(0)$, then
$\widetilde{\ell}_{\ua}$ is a local system of real-analytic coordinates
on $\Bl_0\Yy(S,x)$ around $(g,[\e])$.
\end{itemize}
\end{corollary}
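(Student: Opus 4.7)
The approach combines the real-analyticity statement from Proposition~\ref{prop:coord-holonomy} with an explicit geometric reconstruction that inverts the proposed coordinate map real-analytically. First a dimension count: $\dim\Yy(S,x)=(6g-6+2n)+n = N = \#\ua$, and similarly $\dim\Bl_0\Yy(S,x)=N$ near $\wh\Theta^{-1}(0)$, so in each case we are comparing two real-analytic manifolds of equal dimension.

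For case (1a), adaptedness yields a smooth geodesic representative $\hat\a_i$ of each arc, and maximality of $\ua$ forces the complement $S\setminus\bigcup_i\hat\a_i$ to be a disjoint union of hyperbolic geodesic triangles. Since adaptedness is an open condition on the metric, the same combinatorial triangulation persists for all $g'$ in a neighbourhood of $g$. A hyperbolic triangle is determined up to isometry by its three side lengths, and the dependence on the lengths is real-analytic (by the hyperbolic cosine rule), so the data $(\ell_{\a_i}(g'))_i$ reconstructs each constituent triangle; gluing the triangles back according to the combinatorics of $\ua$ is itself a real-analytic operation and produces a real-analytic local inverse to $\ell_{\ua}$. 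Together with Proposition~\ref{prop:coord-holonomy}(1a) this makes $\ell_{\ua}$ a local real-analytic diffeomorphism. The angles $\th_j$, recovered as sums of the corner angles of triangles meeting at $x_j$, depend real-analytically on the edge lengths as well.

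In case (1b) Euclidean triangles are only determined up to similarity by their edge lengths, so the $N-1$ ratios $\ell_{\a_i}/\ell_{\a_1}$ reconstruct the flat structure up to a global scale — which is exactly the quotient already taken in $\Yy$. Since $\chi(\dot S,\th)$ vanishes identically on the flat locus $\Theta^{-1}(\La_0)$ but varies transversely to it inside $\Yy(S,x)$, adjoining $\chi$ supplies the missing transverse coordinate needed to parametrize a neighbourhood of $g$. For case (2), the triangles are ideal (hence mutually isometric) and ordinary $a$-lengths degenerate; truncating by horocycles of circumferences $\e_j$ produces decorated ideal triangles whose shape is determined real-analytically by the three reduced lengths $\tilde a_i$ — this is precisely Penner's lambda-length setup. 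Reassembling these decorated pieces recovers both the cusped hyperbolic structure and the projective decoration $[\e]$, yielding a real-analytic inverse on $\Bl_0\Yy(S,x)$.

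The subtle point I anticipate is verifying the combinatorial stability of adaptedness under small deformations: one must check that conditions (b) and (c) in the definition of an adapted triangulation guarantee that the same combinatorial gluing pattern applies to every metric in a neighbourhood of $g$, so that the reconstruction map is defined on an open set. In particular, the requirement that only one directed arc emanate from each cusp (resp.\ cylinder, resp.\ undecorated point) is exactly what prevents a triangle from collapsing into a degenerate configuration in the limit. Once this stability is in place, the explicit real-analytic inverse supplied by the triangle-reconstruction procedure closes the argument via the real-analytic inverse function theorem.
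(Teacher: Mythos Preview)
Your approach is essentially the same as the paper's: the paper states the corollary immediately after Proposition~\ref{prop:coord-holonomy}, justifying it only with the one-sentence remark that ``hyperbolic (resp.\ Euclidean) triangles are characterized by the lengths of their edges (resp.\ by the projectivization of the Euclidean lengths of their edges), [so] the holonomy together with an adapted triangulation allow to reconstruct the full geometry of the surface.'' Your reconstruction-of-triangles argument is exactly this, just spelled out in more detail; your additional discussion of the stability of adaptedness under perturbation and of the role of conditions (b)--(c) is a welcome elaboration of points the paper leaves implicit, not a different method.
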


The next task will be to produce at least one triangulation adapted
to $g$ for every $g\in\Bl_0\Yy(S,x)$.

%%%%%%%%%%%%%%%%%%%%%%%%%%%%%%%%%%%%%%%%%%%%%%%%%%%%%%%%%%%%%%%%%%%%%%
\section{Voronoi decomposition}\label{sec:voronoi}

Let $(S,x)$ be a surface with a $\th$-admissible metric $g$.
For the moment, we assume $\Theta(g)\neq 0$,
so that the function $dist:\dot{S}\rar \R_{\geq 0}$
that measures the distance from $x$ is well-defined.

\begin{definition}
A {\bf shortest path} from $p\in\dot{S}$ is a (geodesic) path from $p$ to $x$
of length $dist(p)$.
\end{definition}

The concept of shortest path can extended to the whole $S$.
In fact, it is clear that at every $x_j$ with $\th_j>0$
the constant path is the only shortest one.

\begin{remark}
If $x_j$ is marks a cusp (resp. a cylinder),
then we can cure our definition as follows.
Consider a horoball $B_j$ around $x_j$ of small circumference
(resp. a semi-infinite cylinder $B_j$ ending at $x_j$),
so that no other conical points sit inside $B_j$
and all simple geodesics that enter $B_j$
end at $x_j$.
Let $\g$ be a nonconstant geodesic from $x_j$ to $x$,
which is made of two portions: $\g'$ from $x_j$ to the
first intersection point $y$ of $\g\cap B_j$ and $\g''=\g\setminus\g'$.
We say that $\g$ is
{\bf shortest} if $\ell_{\g''}=dist(y)$.
One can easily see that there are finitely many shortest paths
from a cusp (resp. a cylinder)
and that there is at least one (because $\pa B_j$ is compact).
\end{remark}

If $\th$ is small, then we can consider the modified distance
(with sign) $\widetilde{dist}: S\rar [-\infty,\infty]$
of a point in $S$ from $\pa B$, where $B$ is the standard
decoration and $\widetilde{dist}(p)$
is positive if and only if $p\in S\setminus B$.
Mimicking the trick as in the previous remark,
we can define a modified valence function $\wti{\val}$ on the whole $S$.
It is clear that $\val=\wti{\val}$.

Thus, we can define $\wti{d}$ and $\wti{\val}$ on a
projectively decorated surface $(S,x,[\e])$, by choosing
a system of small balls $B$ whose projectivized circumferences are $[\e]$.

\begin{definition}
The {\bf valence} $\val(p)$ of a point $p\in S$ is the number
of shortest paths at $p$. The {\bf Voronoi graph} $G(g)$ is the locus
of points of valence at least two.
\end{definition}

Because $g$ has constant curvature, one can conclude that $G(g)$
is a finite one-dimensional CW-complex embedded inside $\dot{S}$
with geodesic edges:
its vertices are $V(g)=\val^{-1}([3,\infty))$ and its (open) edges
are $E(g)=\pi_0(\val^{-1}(2))$.
Notice that the closure
$\ol{G(g)}$ passes through $x_j$ if and only if $\th_j=0$.

By definition, for every edge $e\in E(g)$ and for every $p\in e$,
there are exactly two shortest
paths $\ora{\b_1}(p)$ and $\ora{\b_2}(p)$ from $p$. Moreover,
the interior of $\ora{\b_i}(p)$ does not contain any other marked
point for $i=1,2$.
Then the composition $\a_e(p):=\ola{\b_1}(p)\ast\ora{\b_2}(p)$ is an arc
from some $x_j$ to some $x_j$
and its homotopy class (as arcs) $\a_e$ is independent of $p$.

\begin{remark}
The angle $\psi_0(e)$ at $x_j$ spanned by $\bigcup_{p\in e} \ola{\b_1}(p)$
is called ``edge invariant'' by Luo \cite{luo:functionals}.
\end{remark}

\begin{definition}
The (isotopy class of the) path $\a_e\subset S$ is the
{\bf arc dual to $e\in E(g)$} and $\ua(g)=\{\a_e\,|\, e\in E\}$
is the {\bf Voronoi system of arcs} for $g$.
\end{definition}

The complement $S\setminus\ua(g):=\bigcup_{v\in V} t_v$ is called
{\bf Voronoi decomposition}. The cell $t_v$ is
a pointed polygon if $v$ is a cusp and it is a polygon otherwise.

\begin{proposition}\label{prop:voronoi-adapted}
Let $g\in\Bl_0\Yy(S,x)$
be a hyperbolic/flat admissible metric (resp. a
hyperbolic admissible metric with a projective
decoration $[\e]$)
and let $\ua(g)$ its Voronoi system.
Consider a maximal system of arcs $\ua\supseteq\ua(g)$ such that
only one oriented arc in $\ua$ terminates at each cusp/cylinder
(resp. at each cusp $x_j$ with $\e_j=0$).
Then
\begin{itemize}
\item[(1)]
$\a_i$ is compatible with $g$;
\item[(2)]
the geodesic representative $\hat{\a}_i$ of each $\a_i\in\ua$ intersects
$x$ only at $\pa\hat{\a}_i$;
\item[(3)]
$\ua$ is adapted to $g$.
\end{itemize}
\end{proposition}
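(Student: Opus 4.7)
The plan is to handle arcs of $\ua(g)$ and the added diagonals of $\ua\setminus\ua(g)$ separately, using the Voronoi/Delaunay empty-disc property to rule out unwanted intersections with the marked set.

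For an arc $\a_e\in\ua(g)$ dual to a Voronoi edge $e\in E(g)$, I would pick any $p\in e$; the canonical representative $\ola{\b_1}(p)\ast\ora{\b_2}(p)$ of $\a_e$ is a concatenation of two shortest paths, whose interiors are disjoint from $x$ by definition of ``shortest path''. Passing to the universal cover via $\dev:\widetilde{\dot S}\to\Omega_\k$, the crucial geometric fact is that for every $p'\in e$ the open metric ball of radius $d(p',x_{j_1})=d(p',x_{j_2})$ centered on a lift of $p'$ contains no lift of any marked point other than $\ti{x}_{j_1},\ti{x}_{j_2}$. The union of these balls as $p'$ varies in $e$ is an open slab in $\Omega_\k$ containing the straight geodesic from $\ti{x}_{j_1}$ to $\ti{x}_{j_2}$, so the geodesic representative $\hat\a_e$ is smooth and meets $x$ only at its endpoints. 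This gives (1) and (2) for the arcs of $\ua(g)$.

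For the added arcs, I would use that $S\setminus\bigcup_{\a\in\ua(g)}\a=\bigsqcup_{v\in V(g)}t_v$ and that each $t_v$ lifts to a geodesic polygon $\ti{t}_v\subset\Omega_\k$ (pointed at the cusp in the ``pointed polygon'' case). Convexity of $\ti{t}_v$ follows from the characterization of $v$ as a common distance maximizer: all its boundary vertices lie on a single metric circle around a lift $\ti{v}$ of $v$, and no other marked-point lift lies inside or on the corresponding closed disc. The added arcs are the diagonals triangulating the non-triangular $t_v$'s; their straight-line realizations in $\Omega_\k$ stay inside $\ti{t}_v$ by convexity, and hence miss every marked point in their interior. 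The hypothesis that only one oriented arc of $\ua$ terminates at each cusp/cylinder (respectively at each $x_j$ with $\e_j=0$) is precisely what is needed to triangulate the pointed polygons without issuing a further arc from the interior cusp point. This completes (1) and (2); for (3), condition (a) of adaptedness is exactly (1), and (b), (c) are built into the hypothesis on $\ua$.

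The main obstacle will be the convexity/empty-disc argument when some cone angles exceed $\pi$, since geodesics in $\dot S$ can wrap very close to a cone point and the naive Euclidean picture breaks down. The saving ingredient is the strict Voronoi inequality $d(p,x_k)>d(p,x_{j_1})$ for $k\notin\{j_1,j_2\}$ at every $p\in e$, which together with the constant-curvature model $\Omega_\k$ in the cover forces the straightened geodesic to remain inside the empty region, away from all other marked points.
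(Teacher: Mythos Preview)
Your approach is correct and closely related to the paper's, but organized differently. You separate the Voronoi arcs from the added diagonals and argue via the empty-circumdisc property in a developed model, whereas the paper treats both cases at once with a single ``sliding'' homotopy carried out directly on $S$.

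Concretely, the paper takes the point $v_0$ on the Voronoi edge (or vertex, for an added diagonal) dual to $\a_i$ that is \emph{closest} to the two endpoints $x_j,x_k$, and then slides $v_0$ along the equidistant locus $e'$ of $x_j$ and $x_k$ towards the foot $v_1$ of the common perpendicular. Along the slide the bent path $\ola{\b_j}(v_t)\ast\ora{\b_k}(v_t)$ straightens monotonically (the bending angle $\delta(v_t)$ decreases to $\pi/2$), and a one-line distance estimate shows every interior point of $\b_j(v_t)$ and $\b_k(v_t)$ lies strictly inside the ball $B(v_0,d(v_0,x_j))$, hence off $x$. This is exactly your empty-disc fact, but used for the single ball centred at $v_0$ and without ever leaving $S$.

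Two remarks on your version. First, you do not need the whole union of balls as $p'$ ranges over $e$: a single ball $B(\ti p',r)$ is strictly convex in nonpositive curvature, so the open chord from $\ti x_{j_1}$ to $\ti x_{j_2}$ already lies inside that one open ball. Second, the phrase ``lift of a marked point'' needs care, since $x_j\notin\dot S$ and $\dev$ is not injective; the clean statement is that the metric $r$-ball about $\ti p'$ in the completion of $\wti{\dot S}$ contains no completion point, which follows because the covering map does not increase distances and $d_S(p',x_k)\geq r$ for all $k$. Once this is said, your convexity argument for the Delaunay cells (and hence for the added diagonals) goes through, and your worry about cone angles $\geq\pi$ is moot: the convexity takes place in $\Omega_\k$, where there are no cone points at all. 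The paper's route buys an explicit arc-homotopy built into the sliding, with no appeal to covers; yours is closer to the classical Delaunay picture and makes the empty-circumdisc role transparent.
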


\begin{proof}
We only deal with the case $\Theta\neq 0$. The decorated case
is similar and so we omit the details.

Suppose that $\hat{\a}_i$ joins $x_j$ to $x_k$ (possibly $j=k$).
Let $e$ be the edge of the Voronoi graph $G(g)$
dual to $\a_i$ (which may reduce to a vertex) and call $v_0$
the point of $e$ which is closest to $x_j$ and $x_k$.
Let $\ora{\b_j}(v_0)$ (resp. $\ora{\b_k}(v_0)$) be the shortest
path from $v_0$ to $x_j$ (resp. $x_k$), so that
$\a_i\simeq \ola{\b_j}(v_0)\ast\ora{\b_k}(v_0)$.
%
%Then the function $\hat{\a}_i\rar \R_+$ given by
%$y\mapsto d(y,v_0)$ is strictly convex, because the curvature is nonpositive.
%Hence, $d(y,v_0)<d(x_j,v_0)=d(x_k,v_0)$ for all $y\in\mathrm{int}(\hat{\a}_i)$
%and so $\mathrm{int}(\hat{\a}_i)$ cannot intersect $x$ by the very definition
%of Voronoi decomposition, which proves (1).

\begin{center}
\begin{figurehere}
\psfrag{xj}{$x_j$}
\psfrag{xk}{$x_k$}
\psfrag{v0}{$v_0$}
\psfrag{vt}{$v_t$}
\psfrag{v1}{$v_1$}
\psfrag{bj(v0)}{$\ora{\b_j}(v_0)$}
\psfrag{bj(vt)}{$\ora{\b_j}(v_t)$}
\psfrag{bk(v1)}{$\ora{\b_k}(v_1)$}
\psfrag{e}{$e$}
\psfrag{d}{$\delta(v_t)$}
\psfrag{e'}{$e'$}
\includegraphics[width=0.7\textwidth]{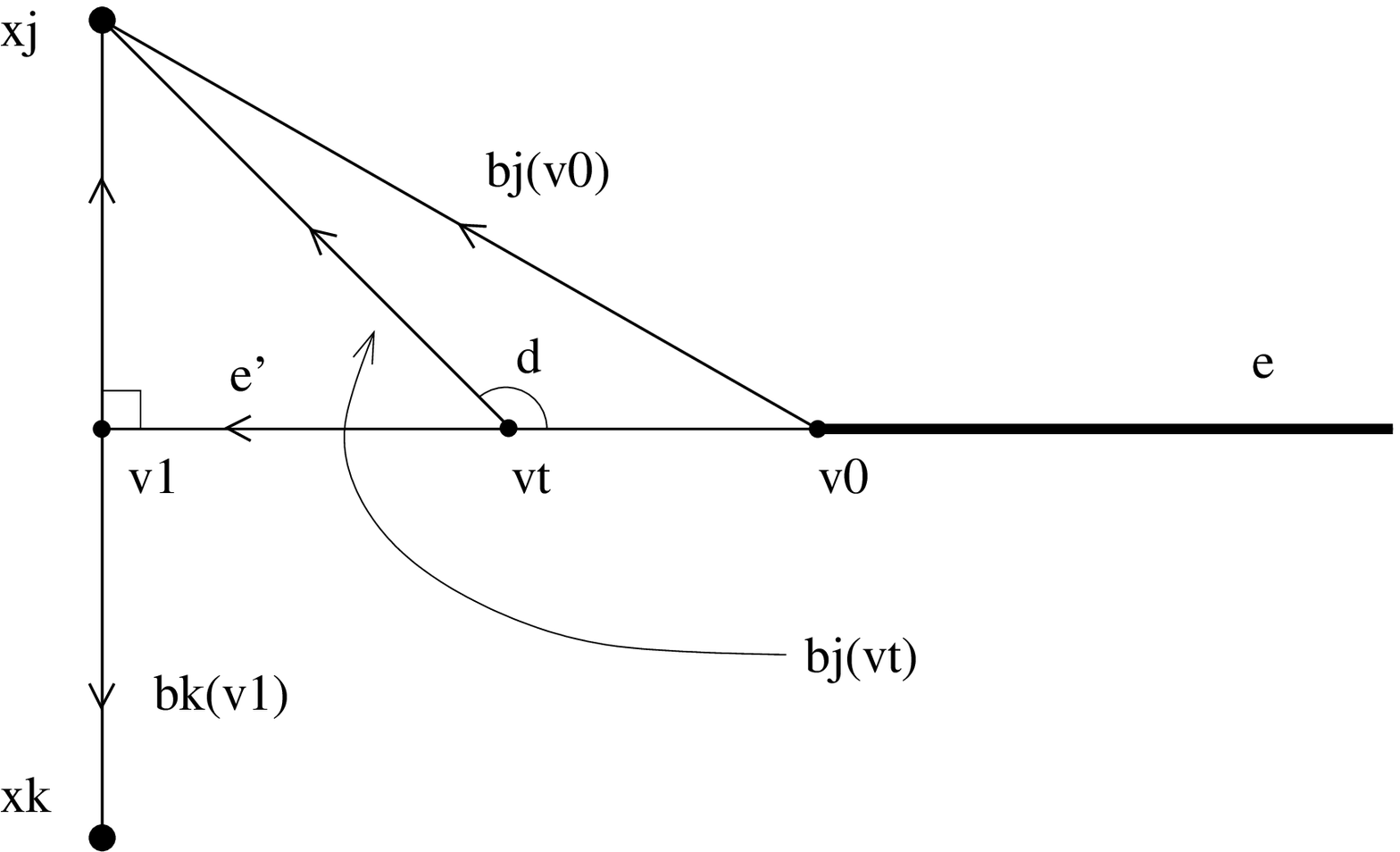}
\caption{The case in which $e'\neq\{v_0\}$.}
\end{figurehere}
\end{center}

Consider the maximal closed geodesic segment $e'$
that starts at $v_0$
and such that,
for every $v\in e'$, the shortest path $\ora{\b_j}(v)$
from $v$ to $x_j$ homotopic to $\ora{vv_0}\ast\ora{\b_j}(v_0)$
and the shortest path $\ora{\b_k}(v)$
from $v$ to $x_k$ homotopic to $\ora{vv_0}\ast\ora{\b_k}(v_0)$
satisfy $\ell(\b_j(v))=\ell(\b_k(v))\leq\ell(\b_j(v_0))=\ell(\b_k(v_0))$.
Call $\delta(v)$ the angle $\wh{v_0 v \b_j}=\wh{v_0 v \b_k}$.

If $e'=\{v_0\}\subset e$, then $\delta(v_0)=\pi/2$ and
$\mathrm{int}(\b_j(v_0))\cap x=
\mathrm{int}(\b_k(v_0))\cap x=\emptyset$;
so $\ola{\b_j}(v_0)\ast\ora{\b_k}(v_0)$
is already the desired smooth geodesic $\hat{\a}_i$.

Otherwise, start travelling along $e'$ from $v_0$ until the point $v_1$
which is closest to $x_j$ and $x_k$. Call $v_t$ the points of $e'$
between $v_0$ and $v_1$ for $t\in(0,1)$.
Clearly, $\delta(v_1)=\pi/2$
and $\delta(v_t)$ is a strictly decreasing
function of $t$.

As a consequence, $d(v_0,y)<d(v_0,x_j)$
for all $y\in\mathrm{int}(\b_j(v_t))$ and $t\in(0,1]$
(and similarly for $x_k$). Thus, $\mathrm{int}(\b_j(v_t))\cap x=
\mathrm{int}(\b_k(v_t))\cap x=\emptyset$ for $t\in[0,1]$.

We can conclude that $\a_i(t):=\ola{\b_j}(v_t)\ast \ora{\b_k}(v_t)$
is the wished homotopy of arcs between $\a_i\simeq \a_i(0)$ and
the smooth geodesic $\hat{\a}_i:=\a_i(1)$.

Parts (2) and (3) clearly follow from (1).
\end{proof}

\begin{remark}
It was shown by Rivin \cite{rivin:euclidean} (in the flat case)
and by Leibon \cite{leibon:hyperbolic} (in the hyperbolic case)
that the Voronoi construction gives a $\Mod(S,x)$-equivariant
cellularization of $\Yy(S,x)$: the affine coordinates on each cell
are given by $\{\psi_0(e)\,|\,e\in E(g)\}$ (Luo \cite{luo:rigidity}
has shown that one can also use different curvature functions $\psi_k$).
This is similar to what happens for surfaces with geodesic boundary,
after replacing $\psi_0$ by the analogous quantity
\cite{luo:boundary} \cite{mondello:WP}. However, the cone parameters
$\psi_0(e)$ must obey some extra constraints, because the sum of
the internal angles of a triangle $t$ cannot exceed $\pi$.
Thus, the cells of $\Yy(S,x)$ are {\it truncated} simplices.
\end{remark}

%%%%%%%%%%%%%%%%%%%%%%%%%%%%%%%%%%%%%%%%%%%%%%%%%%%%%%%%%%%%%%%%%%%%%%%%%
%
\section{An explicit formula}\label{sec:explicit}

Similarly to \cite{penner:volumes} and \cite{mondello:WP},
we want now to provide an explicit formula for $\eta$
in terms of the $a$-lengths, using techniques from \cite{goldman:hamiltonian}.

\begin{theorem}\label{thm:poisson}
Let $\ua$ be a triangulation of $(S,x)$ adapted to
$g\in\Yy(S,x)(\La_-^\circ)$ and let $a_k=\ell_{\a_k}$.
Then the Poisson structure $\eta$ at $g$ can be expressed
in terms of the $a$-lengths as follows
\[
\eta_g=
\sum_{h=1}^n
\sum_{\substack{\mathrm{s}(\ora{\a_i})=x_h\\
\mathrm{s}(\ora{\a_j})=x_h}}
\frac{\sin(\th_h/2-d(\ora{\a_i},\ora{\a_j}))}{\sin(\th_h/2)}
\frac{\pa}{\pa a_i}\wedge
\frac{\pa}{\pa a_j}
\]
%
%%%% MA io voglio \eta_g=-1/2
%%%%%% perche' 2\eta_{wolp}=1/4\sum....
%%%%% \eta_{wolp}=1/2 \eta_{WP,Gold}
%%%% \eta_{WP,gold}=1/8 \eta_{rep}
%
where $s(\ora{\a}_k)$ is the starting point of the oriented
geodesic arc $\ora{\a}_k$ and $d(\ora{\a_i},\ora{\a_j})$ is the angle
spanned by rotating the tangent vector to $\ora{\a_i}$
at its starting point
clockwise to the tangent vector at the starting point of
$\ora{\a_j}$.
If $\th\in(0,2\pi)^n$, then the above formula also expresses
$8$-times the Weil-Petersson dual symplectic form $\eta_{WP,\th}$
at $g\in \Teich(S,x)$.
\end{theorem}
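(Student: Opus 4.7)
The plan is to follow Goldman's Hamiltonian-flow philosophy from \cite{goldman:hamiltonian}, computing $\eta_g$ by evaluating it on the coordinate cotangent basis $\{da_i\}_{i=1}^N$ and showing that the resulting bivector matches the stated formula. Under the Lefschetz-duality identification displayed in Section~\ref{sec:poisson}, the tangent space $T_\rho \Rep(\gru,\PSL_2(\R))$ is $H^1(\dot{S};\xi)$ and its dual is $H^1(\dot{S},\dot{D};\xi)$, with $\eta$ induced by the $\Bb$-valued cup product on relative cohomology. Because $\hol$ is a local diffeomorphism on $\Yy(S,x)(\La_-^\circ)$ (Theorem~\ref{thm:holonomy}(a)), it suffices to carry out the computation on the representation side.

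The first step is to realize each $da_i$ as an explicit relative cocycle. By Remark~\ref{rmk:a-lengths}, $a_i$ is the hyperbolic distance in $\H$ between the fixed points $\wti{x}_b, \wti{x}_c$ of the elliptic elements $\hol(g)(\g_b), \hol(g)(\g_c)$ whenever $\th_b,\th_c\notin 2\pi\N_+$. Differentiating the distance formula with respect to a class $\nu\in H^1(\dot{S};\xi)$ and using that $\nu|_{\dot{D}_b}$ modifies $\wti{x}_b$ through its component transverse to $\lieg^{\rho(\g_b)}$, I would produce a representative of $da_i$ supported on a neighborhood of the adapted geodesic $\hat{\a}_i$, whose boundary values at the endpoints $x_b, x_c$ are prescribed in $\lieg/\lieg^{\rho(\g_b)}$ and $\lieg/\lieg^{\rho(\g_c)}$ by the unit tangent vectors of $\ora{\hat{\a}_i}$. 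Because $\ua$ is adapted to $g$ (Proposition~\ref{prop:voronoi-adapted}), the geodesic arcs only meet at the cone points, so when I $\Bb$-cup two such classes and integrate, all bulk contributions cancel and the pairing localizes as a sum indexed by the common starting points:
\[
\eta_g(da_i,da_j)=\sum_{h:\,s(\ora{\a_i})=s(\ora{\a_j})=x_h} C_h(\ora{\a_i},\ora{\a_j}),
\]
with $C_h$ a purely local coefficient determined by the elliptic holonomy at $x_h$ and the two tangent directions.

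The second and decisive step is the explicit evaluation of $C_h$. After conjugating so that a chosen lift $\wti{x}_h$ is $i\in\H$, the local holonomy $\rho(\g_h)$ is rotation by $\th_h$ generated by $(\th_h/2)(E-F)$, and the nearby fixed points $\wti{x}_{b(i)},\wti{x}_{b(j)}$ can be parametrized by the $a$-lengths $a_i,a_j$ together with the angle $d(\ora{\a_i},\ora{\a_j})$. Pairing the two local jump classes via $\Bb$, expanding in this basis, and applying the sine subtraction identity should yield exactly $C_h=\sin(\th_h/2-d(\ora{\a_i},\ora{\a_j}))/\sin(\th_h/2)$. I expect this to mirror term by term the analogous boundary-length calculation in \cite{mondello:WP}, with $\sinh(\l/2)$ replaced by $\sin(\th/2)$ in agreement with the formal analytic continuation $\l\mapsto i\th$ noted in the introduction. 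The final sentence, identifying the same formula with $8\,\eta_{WP,\th}$ when $\th\in(0,2\pi)^n$, is then immediate from Theorem~\ref{thm:eta}.

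The main obstacle is the second step: making the localization argument rigorous and bookkeeping the signs and orientation conventions that produce precisely $\sin(\th_h/2-d)/\sin(\th_h/2)$ rather than, say, its partner with the opposite sign of $d$. Since $\rho(\g_h)$ fixes $\wti{x}_h$, the restriction $da_i|_{\dot{D}_h}$ is a priori a nonzero cohomology class and one must check that the apparent cross-contribution from the nontrivial $\lieg^{\rho(\g_h)}$-part vanishes in the $\Bb$-pairing. Handling this carefully—together with the clockwise convention for the angle $d(\ora{\a_i},\ora{\a_j})$ at a cone point, which dictates the sign inside the sine—is where the bulk of the technical work will lie.
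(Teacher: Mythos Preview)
Your plan is the same as the paper's: compute $\eta_g(da_i,da_j)$ via Goldman's product formula, localize at the common endpoints of $\hat{\a}_i$ and $\hat{\a}_j$, and evaluate the local contribution trigonometrically. What is missing from your sketch is precisely the execution of the second step, and the paper's way of doing it resolves both difficulties you flag.

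First, instead of representing $da_i$ by a de~Rham--type cocycle supported near $\hat{\a}_i$, the paper works in group cohomology: to each oriented arc $\ora{\a_k}$ it associates the small loop $\g(\ora{\a_k})\in\gru$ about $s(\ora{\a_k})$, sets $S_k=\rho(\g(\ora{\a_k}))$, $s_k=\log S_k$ and similarly $F_k,f_k$ for $\ola{\a_k}$, and then derives (Lemma~\ref{lemma:technical}) an explicit formula for $da_k$ as a $\xi$-valued $1$-chain supported on $\g(\ora{\a_k})$ and $\g(\ola{\a_k})$. The point you were worried about---the $\lieg^{\rho(\g_h)}$-component---appears in that formula as terms proportional to $s_k\otimes\g(\ora{\a_k})$ and $f_k\otimes\g(\ola{\a_k})$; by part~(a) of the same lemma these are multiples of $d\th_{s(\ora{\a_k})}$ and $d\th_{s(\ola{\a_k})}$, hence lie in the radical of $\eta$ and can be discarded. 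This is exactly the mechanism that makes the ``cross-contribution'' vanish.

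Second, after this reduction one is left with pairing $R_i\otimes\g(\ora{\a_i})$ against $R_j\otimes\g(\ora{\a_j})$, where $R_k=(1-\Ad_{S_k}^{-1})^{-1}[f_k,s_k]$. The intersection $\g(\ora{\a_i})\cap\g(\ora{\a_j})$ is two signed points, and the $(1-\Ad_{S_j}^{-1})$ coming from the parallel transport between them cancels one of the inverses, leaving $\Bb\bigl((1-\Ad_{S_i}^{-1})^{-1}L(\ora{\a_i}),L(\ora{\a_j})\bigr)$ after Lemma~\ref{lemma:trig}. Since $\Ad_{S_i}$ rotates $L(\ora{\a_i})$ by $\th_h$, the identity
\[
2\,\mathrm{Re}\!\left[\frac{e^{-\sqrt{-1}\,\d}}{1-e^{-\sqrt{-1}\,\th_h}}\right]
=\frac{\sin(\th_h/2-\d)}{\sin(\th_h/2)}
\]
gives the coefficient and fixes the sign of $\d=d(\ora{\a_i},\ora{\a_j})$. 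So your outline is right; the substance is in Lemma~\ref{lemma:technical} and the algebraic trick with $(1-\Ad_S^{-1})^{-1}$.
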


\begin{remark}
In \cite{mondello:WP} a similar formula for hyperbolic surfaces with
geodesic boundary is proven. Really, if $\Sigma$ is a surface with boundary,
and $d\Sigma$ is its double with the natural real involution $\s$,
then $\pi_{\i}:\Teich(d\Sigma)^{\s}\rar\Teich(\Sigma)$ has the property
that $(\pi_\i)_*\eta_{WP,dS}=2\eta_{WP,S}$,
and not $\eta_{WP,S}$, as claimed in Proposition~{1.7} of \cite{mondello:WP}.
This explains why the two formulae
are off by a factor $2$.
\end{remark}

\begin{proof}[Proof of Theorem~\ref{thm:poisson}]
We want to compute $\eta_g(da_i,da_j)$.
Fix a basepoint $p\in \dot{S}$ and call $\g(\ora{\a_k})$ the parabolic
element of $\gru:=\pi_1(\dot{S},p)$ that winds around $s(\ora{\a_k})$,
in such a way that $\g(\ora{\a_k})\ast \g(\ola{\a_k})$ corresponds
to the arc $\a_k$.

Let $\rho:=\Hol(g)$ and let $u\in H^1(\dot{S};\xi)$ be a tangent
vector in $T_{\rho}\Rep(\gru,\PSL_2(\R))$.
The deformation of $\rho$ corresponding to $u$ can be written as
$\rho_t(\g)=\rho(\g)+t u(\g)\rho(\g)+O(t^2)$ and we will
also write
$S_k(t)=\rho_t(\g(\ora{\a_k}))$ and $s_k=\log(S_k)$,
and similarly
$F_k(t)=\rho_t(\g(\ola{\a_k}))$ and $f_k=\log(F_k)$.

Because of Lemma~\ref{lemma:technical}(c),
\[
\Bb(da_i,da_j)=
\frac{4 \Bb(d\Bb(s_i,f_i)\cap d\Bb(s_j,f_j))}
{\sinh(a_i)\sinh(a_j)\th_{s(\ora{\a_i})}
\th_{s(\ola{\a_i})}\th_{s(\ora{\a_j})}
\th_{s(\ola{\a_j})}}
\]
The numerator potentially contains $4$ summands: we will only compute
the one occurring when $s(\ora{\a_i})=s(\ora{\a_j})$, as the others
will be similar. In particular, because of Lemma~\ref{lemma:technical}(b),
we need to calculate
$\Bb(R_i\otimes \g(\ora{\a_i})\cap R_j\otimes \g(\ora{\a_j}))$,
where 
$R_k:=(1-\Ad_{S_k}^{-1})^{-1}[f_k,s_k]$,
because $s_k\otimes \g(\ora{\a_k})$ (resp. $f_k\otimes\g(\ola{\a_k})$)
is a multiple of $d\th_{s(\ora{\a_k})}$ (resp.
$d\th_{s(\ola{\a_k})}$) by Lemma~\ref{lemma:technical}(a)
and $d\th_h$ belongs to the radical of $\eta$ for every $h$.
%
%\Ad_{S_k^{-1}}(1-\Ad_{S_k^{-1}})^{-1}[s_k,f_k]
%=-(1-\Ad_{S_k})^{-1}[s_k,f_k].

The local situation around $s(\ora{\a_i})$ is described
in Figure~\ref{fig:eta}.

\begin{center}
\begin{figurehere}
\psfrag{fi}{$s(\ola{\a_i})$}
\psfrag{fj}{$s(\ola{\a_j})$}
\psfrag{si}{$s(\ora{\a_i})$}
\psfrag{p}{$p$}
\psfrag{p'}{$p'$}
\psfrag{gfi}{$\g(\ola{\a_i})$}
\psfrag{gfj}{$\g(\ola{\a_j})$}
\psfrag{gsi}{$\g(\ora{\a_i})$}
\psfrag{gsj}{$\g(\ora{\a_j})$}
\psfrag{y1}{$y_1$}
\psfrag{y2}{$y_2$}
\includegraphics[width=0.7\textwidth]{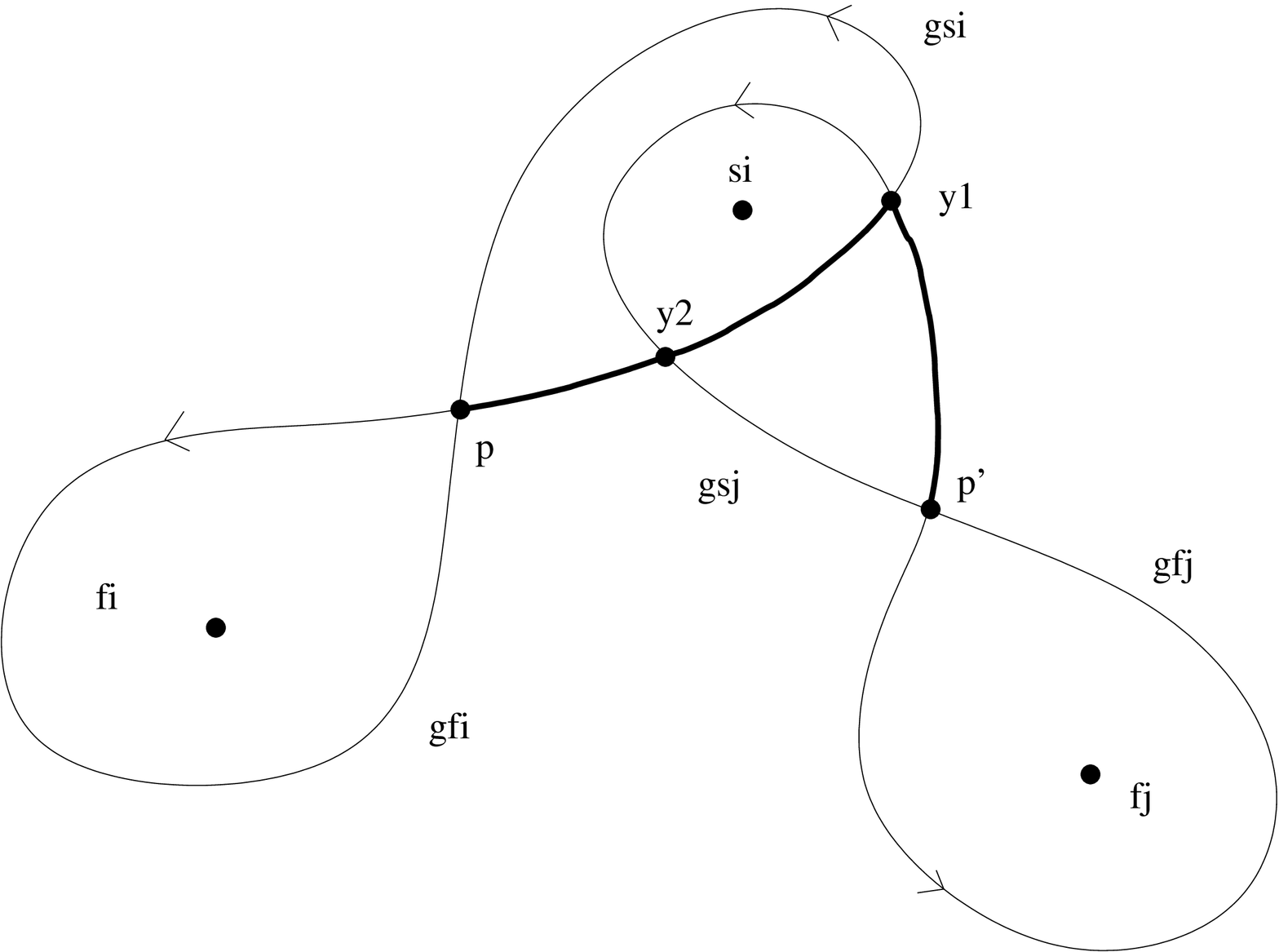}
\caption{The bundle $\xi$ is trivialized along the thick
path.}\label{fig:eta}
\end{figurehere}
\end{center}

The intersection pairing at the level of $1$-chains gives
$\g(\ora{\a_i})\cap \g(\ora{\a_j})=y_1-y_2$.
Because we have trivialized $\xi$ on the thick part,
we obtain
\[
\Bb(R_i\otimes \g(\ora{\a_i})\cap R_j\otimes \g(\ora{\a_j}))
= \Bb(R_i,(1-\Ad_{S_j}^{-1}) R_j) =
\Bb(R_i,[f_j,s_j])
\]
By Lemma~\ref{lemma:trig},
\[
[s_k,f_k]=\frac{1}{4}\th_{s(\ora{\a_k})}\th_{s(\ola{\a_k})}[L(S_k),L(F_k)]
=\frac{1}{2}\th_{s(\ora{\a_k})}\th_{s(\ola{\a_k})}\sinh(a_k) L(\ora{\a_k})
\]
where $L(\ora{\a_k})$ is the axis of the geodesic $\ora{\hat{\a}_k}$.

So far we have obtained
\begin{align*}
\Bb( (1-\Ad_{S_i^{-1}})^{-1}[f_i,s_i],[f_j,s_j]) =
\frac{1}{4}\th_{s(\ora{\a_i})}\th_{s(\ola{\a_i})}
& \th_{s(\ora{\a_j})}\th_{s(\ola{\a_j})}
\sinh(a_i)\sinh(a_j)\cdot \\
& \cdot \Bb( (1-\Ad_{S_i}^{-1})^{-1} L(\ora{\a_i}),L(\ora{\a_j}))
\end{align*}
Notice that $\Ad_{S_i^h}=\exp (h\,\ad_{s_i})$
acts on $L(\ora{\a_i})$ as
a rotation of angle $h\nu$ centered at $s(\ora{\a_i})$,
where $\nu=\th_{s(\ora{\a_i})}$,
and so
\[
\Bb(\Ad_{S_i^h}L(\ora{\a_i}),L(\ora{\a_j}))=
2\cos(-\d+h\nu)=
2\mathrm{Re}\left[\exp((-\d)\sqrt{-1}+h\nu\sqrt{-1})
\right]
\]
where $\d=d(\ora{\a_i},\ora{\a_j})$.
Hence,
\[
\Bb(w(\ad_{s_i})L(\ora{\a_i}),L(\ora{\a_j}))=2
\mathrm{Re}\left[\exp( -\d \sqrt{-1} ) w( \nu\sqrt{-1}) \right]
\]
where $w$ is an analytic function.

Therefore, we can conclude that
\[
\Bb(R_i\otimes \g(\ora{\a_i})\cap R_j\otimes \g(\ora{\a_j}))
= \frac{1}{4}\th_{s(\ora{\a_i})}\th_{s(\ola{\a_i})}
\th_{s(\ora{\a_j})}\th_{s(\ola{\a_j})}\sinh(a_i)\sinh(a_j)
\frac{\sin(\th_{s(\ora{\a_i})} /2-\d)}{\sin(\th_{s(\ora{\a_i})}/2)}
\]
because $\displaystyle
2\mathrm{Re}\left[
\frac{\exp(-\d\sqrt{-1})}{1-\exp(-\nu\sqrt{-1})} \right]
=\frac{\sin (\nu/2-\d)}{\sin(\nu/2)}$.

Finally, the first summand of $\Bb(da_i,da_j)$ is
$\displaystyle
\frac{\sin (\th_{s(\ora{\a_i})} /2-d(\ora{\a_i},\ora{\a_j}))}
{\sin(\th_{s(\ora{\a_i})} /2)}.$
\end{proof}

To complete the proof of the theorem, we only need to establish the
following.

\begin{lemma}\label{lemma:technical}
\[
\begin{array}{ll}
{\text{\rm{(a)}}} &
d\th_{s(\ora{\a_k})}=L(S_k)\otimes \g(\ora{\a_k})\\
{\text{\rm{(b)}}} &
d\Bb(s_k(t),f_k(t))
= (1-\Ad_{F_k^{-1}})^{-1}[s_k,f_k]
\otimes \g(\ola{\a_k})
+(1-\Ad_{S_k^{-1}})^{-1}[f_k,s_k]
\otimes\g(\ora{\a_k})+\\
& \qquad\qquad\qquad\qquad
+\frac{\Bb(f_k,f_k)}{\Bb(s_k,f_k)}
f_k\otimes\g(\ola{\a_k})+
\frac{\Bb(s_k,s_k)}{\Bb(f_k,s_k)}
s_k\otimes\g(\ora{\a_k})
%\sum_{h\geq 1}
%\left(
%\Ad_{S_k^{-h}}[s_k,f_k]\otimes \g(\ora{\a_k})+
%\Ad_{F_k^{-h}}[f_k,s_k]\otimes \g(\ola{\a_k})
%\right)
\\
{\text{\rm{(c)}}} &
\displaystyle
\sinh(a_k)da_k=
\left[
\frac{2d\th_{s(\ora{\a_k})}}{\th_{s(\ora{\a_k})}^2\th_{s(\ola{\a_k})}}
+\frac{2d\th_{s(\ola{\a_k})}}{\th_{s(\ola{\a_k})}^2\th_{s(\ora{\a_k})}}
\right]\Bb(s_k,f_k)
-
\frac{2 d\Bb(s_k,f_k)}{\th_{s(\ora{\a_k})}
\th_{s(\ola{\a_k})} }
\end{array}
\]
as elements of $T^*_g \Yy(S,x)\cong H_1(\dot{S};\xi)$.
\end{lemma}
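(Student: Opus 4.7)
The plan is to realize all three differentials as classes in $H_1(\dot{S};\xi)$ by pairing with first-order cocycle deformations and exploiting the exponential map of $\psl_2(\R)$. I first represent a tangent vector at $\rho=\Hol(g)$ by a cocycle $u\colon\gru\to\lieg$, so that $S_k(t)=(1+tu(\g(\ora{\a_k})))S_k+O(t^2)$, and the Duhamel formula for $d\log$ gives
\[
ds_k=\frac{\ad_{s_k}}{e^{\ad_{s_k}}-1}\,u(\g(\ora{\a_k})),
\]
and analogously for $df_k$. Under Lefschetz duality, an element $X\otimes\g\in H_1(\dot{S};\xi)$ pairs with $u$ by $\Bb(X,u(\g))$, so to identify each differential as claimed it suffices to produce this scalar expression on cocycles. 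For part (a), I would start from $\Bb(s_k,s_k)=-\th_{s(\ora{\a_k})}^2/2$ (since $s_k$ is proportional to the elliptic generator $L(S_k)$ of $\Bb$-norm $-2$), differentiate, and use that $\ad_{s_k}L(S_k)=0$ together with the $\Bb$-skewness of $\ad$ to collapse $\frac{\ad_{s_k}}{e^{\ad_{s_k}}-1}$ to the identity in this pairing. That immediately yields $d\th_{s(\ora{\a_k})}=L(S_k)\otimes\g(\ora{\a_k})$ up to the sign conventions fixed by orientation.

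Part (b) is the main obstacle. I would differentiate $\Bb(s_k,f_k)$ directly and, by $\Bb$-adjointness of $\ad$, rewrite the two contributions as
\[
\Bb\bigl(u(\g(\ora{\a_k})),\tfrac{\ad_{s_k}}{1-e^{-\ad_{s_k}}}f_k\bigr)+\Bb\bigl(u(\g(\ola{\a_k})),\tfrac{\ad_{f_k}}{1-e^{-\ad_{f_k}}}s_k\bigr).
\]
Unlike in (a), neither argument of the resolvent is annihilated by the corresponding $\ad$, so I would decompose $f_k=\frac{\Bb(s_k,f_k)}{\Bb(s_k,s_k)}s_k+f_k^{\perp}$ (and symmetrically for $s_k$ with respect to $f_k$). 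The component parallel to $s_k$ is $\ad_{s_k}$-killed, on which the resolvent acts as the identity; substituting $\Bb(s_k,s_k)=-\th_{s(\ora{\a_k})}^2/2$ produces exactly the correction term $\frac{\Bb(s_k,s_k)}{\Bb(f_k,s_k)}s_k\otimes\g(\ora{\a_k})$. The transverse component $f_k^{\perp}$ reassembles, via $\ad_{s_k}f_k^{\perp}=[s_k,f_k]$ and the identity $\frac{\ad_{s_k}}{1-e^{-\ad_{s_k}}}=(1-e^{-\ad_{s_k}})^{-1}\ad_{s_k}$ together with $\Ad_{S_k^{-1}}=e^{-\ad_{s_k}}$, into the principal term $(1-\Ad_{S_k^{-1}})^{-1}[f_k,s_k]\otimes\g(\ora{\a_k})$. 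The other summand is handled symmetrically. The delicate point is keeping track of the scaling factors in the parallel/transverse split so that the correction coefficients come out to be exactly the stated ratios of Killing pairings.

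Part (c) is then algebraic. I would extract a closed-form expression for $\cosh(a_k)$ in terms of Killing data by combining the commutator identity $[s_k,f_k]=\tfrac{1}{2}\th_{s(\ora{\a_k})}\th_{s(\ola{\a_k})}\sinh(a_k)L(\ora{\a_k})$ with the $\psl_2(\R)$-identity $\Bb([X,Y],[X,Y])=2\bigl(\Bb(X,Y)^2-\Bb(X,X)\Bb(Y,Y)\bigr)$, together with $\Bb(L(\ora{\a_k}),L(\ora{\a_k}))=2$ for the hyperbolic translation generator and $\Bb(s_k,s_k)=-\th_{s(\ora{\a_k})}^2/2$. This yields
\[
\cosh(a_k)=-\frac{2\,\Bb(s_k,f_k)}{\th_{s(\ora{\a_k})}\th_{s(\ola{\a_k})}},
\]
and differentiating this with the quotient rule while substituting $d\th_{s(\ora{\a_k})}$ and $d\th_{s(\ola{\a_k})}$ from part (a) gives exactly the stated expression for $\sinh(a_k)\,da_k$.
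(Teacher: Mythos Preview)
Your proposal is correct and follows essentially the same route as the paper. The Duhamel expansion $ds_k=\frac{\ad_{s_k}}{e^{\ad_{s_k}}-1}\,u(\g(\ora{\a_k}))$ together with your parallel/perpendicular split of $f_k$ relative to $s_k$ is exactly the content of the paper's appendix formula for $\log(\exp(tu)S)$ (Lemma~\ref{lemma:log}), which the paper then feeds into $\Bb(s_k(t),f_k(t))$ and manipulates by $\Bb$-adjointness just as you do. The only minor divergence is in part~(c): the paper differentiates the identity $\Bb(L(S_k),L(F_k))=-2\cosh(a_k)$ straight from Lemma~\ref{lemma:trig}(a), whereas you rederive that identity from the commutator norm via $\Bb([X,Y],[X,Y])=2\bigl(\Bb(X,Y)^2-\Bb(X,X)\Bb(Y,Y)\bigr)$ --- a short detour that lands on the same formula.
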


\begin{proof}
Part (a) was essentially proved in \cite{goldman:hamiltonian} and
part (c) is easily obtained from Lemma~\ref{lemma:trig}(a) by
differentiation.

For part (b), consider the function $\Bb(s_k(t),f_k(t))$
along the path $t\mapsto \rho_t=\exp(tu)\rho=\rho+tu\rho+O(t^2)$,
where $s_k(0)=s_k$ and $f_k(0)=f_k$.
By Lemma~\ref{lemma:log}
\[
s_k(t)=\log\left[\exp(tu_{\ora{k}} ))\exp(s_k) \right]=
s_k+t (1-\Ad_{S_k})^{-1}
[s_k,u_{\ora{k}}]+t\frac{\Bb(u_{\ora{k}},s_k)}{\Bb(s_k,s_k)}+O(t^2)
\]
where $u_{\ora{k}}=u(\g(\ora{\a_k}))$ and $u_{\ola{k}}=
u(\g(\ola{\a_k}))$.
Hence,
\begin{align*}
\Bb(s_k(t),f_k(t))
& =\Bb(s_k,f_k)
+t\Bb(s_k,(1-\Ad_{F_k})^{-1}[f_k,u_{\ola{k}}])+
t\frac{\Bb(u_{\ola{k}},f_k)}
{\Bb(f_k,f_k)}\Bb(s_k,f_k)+ \\
& \qquad\qquad +
t\Bb(f_k,(1-\Ad_{S_k})^{-1}[s_k,u_{\ora{k}}])+t\frac{\Bb(u_{\ora{k}},s_k)}
{\Bb(s_k,s_k)}\Bb(f_k,s_k)+O(t^2)= \\
& = \Bb(s_k,f_k)+t\Bb(u_{\ola{k}},(1-\Ad_{F_k^{-1}})^{-1}[s_k,f_k])+
t\frac{\Bb(f_k,f_k)}{\Bb(s_k,f_k)}
\Bb(u_{\ola{k}},f_k)
+ \\
& \qquad\qquad + t\Bb(u_{\ora{k}},(1-\Ad_{S_k^{-1}})^{-1}[f_k,s_k])+
t
\frac{\Bb(s_k,s_k)}{\Bb(f_k,s_k)}
\Bb(u_{\ora{k}},s_k)+O(t^2)
\end{align*}
Finally,
\begin{align*}
d\Bb(s_k(t),f_k(t))
& = (1-\Ad_{F_k^{-1}})^{-1}[s_k,f_k]
\otimes \g(\ola{\a_k})
+(1-\Ad_{S_k^{-1}})^{-1}[f_k,s_k]
\otimes\g(\ora{\a_k})+ \\
& \qquad\qquad +\frac{\Bb(f_k,f_k)}{\Bb(s_k,f_k)}
f_k\otimes\g(\ola{\a_k})+
\frac{\Bb(s_k,s_k)}{\Bb(f_k,s_k)}
s_k\otimes\g(\ora{\a_k})
\end{align*}
\end{proof}

%%%%%%%%%%%%%%%%%%%%%%%%%%%%%%%%%%%%%%%%%%%%%%%%%%%%%%%%%%%%%%%%%%%%%5
%
\appendix
\section{Some linear algebra}

Let $R\in\PSL_2(\R)$ be a hyperbolic element
corresponding to the oriented geodesic $\ora{\b}$ in $\H$.
Define $L(R)=2r/\ell(R)\in\psl_2(\R)$,
where $r=\log(R)$ is the unique logarithm of $R$ in $\psl_2(\R)$
and $\ell(R)=\mathrm{arccosh}(\tr(R^2)/2)$
is the translation distance of $R$, so that $\Bb(L(R),L(R))=2$.

\begin{remark}
Given an oriented hyperbolic geodesic $\ora{\b}$ in $\H$,
we say that a component of $\H\setminus\b$ is
the {\it $\b$-positive half-plane} if it induces the orientation
of $\ora{\b}$ on its boundary.
The definition of positive half-plane with respect to an
oriented line in $\R^2$ is similar.
\end{remark}

If $S\in\PSL_2(\R)$ is elliptic of angle $\nu=\mathrm{arccos}(\tr(S^2)/2)$,
then
define $L(S)=2s/\nu\in\psl_2(\R)$, where $s=\log(S)$
is an infinitesimal counterclockwise rotation, so that
$\Bb(L(S),L(S))=-2$.

Simple considerations of hyperbolic geometry give the following
(see \cite{ratcliffe:foundations}, for instance).

\begin{lemma}\label{lemma:trig}
{\rm{(a)}}
Let $S_1,S_2\in\PSL_2(\R)$ be elliptic elements that fix
distinct points $x_1,x_2\in\H$ and let $R$ be the hyperbolic element
that fixes the unique geodesic through $x_1$ and $x_2$ and takes
$x_1$ to $x_2$. Then
\begin{align*}
\Bb(L(S_1),L(S_2)) & = -2\cosh(d(x_1,x_2)) \\
[L(S_1),L(S_2)] & =
2\sinh(d(x_1,x_2)) L(R)
\end{align*}
where $d(x_1,x_2)$ is the hyperbolic distance between $x_1$ and $x_2$.

{\rm{(b)}}
Let $R_1,R_2\in\PSL_2(\R)$ be hyperbolic elements corresponding
to oriented geodesics $\ora{\b_1},\ora{\b_2}$ on $\H$. Then
\[
\Bb(L(R_1),L(R_2))=
\begin{cases}
2\cos(\d) & \text{if they meet forming an angle $\d$} \\
2\cosh(d(\b_1,\b_2)) & \text{if they are disjoint.}
\end{cases}
\]

{\rm{(c)}}
Let $R\in\PSL_2(\R)$ be a hyperbolic element corresponding to $\ora{\b}$
and $S\in\PSL_2(\R)$ be an elliptic element that fixes $x\in\H$.
Then
\[
\Bb(L(R),L(S))=-2 \sinh(d(\ora{\b},x))
\]
where $d(\ora{\b},x)$ is positive if $x$ lies in the $\ora{\b}$-positive
half-plane.
\end{lemma}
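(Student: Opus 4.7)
The plan is to exploit $\PSL_2(\R)$-equivariance to reduce each identity to a single normal form. Since $\Bb$ is $\Ad$-invariant, the Lie bracket is $\Ad$-equivariant, and $L(gRg^{-1})=\Ad_g L(R)$, both sides of each equation transform in the same way under conjugation. Because $\PSL_2(\R)$ acts transitively on the relevant configurations (ordered pairs of distinct points of $\H$ at a prescribed hyperbolic distance; ordered pairs of oriented geodesics meeting at a prescribed angle or disjoint at a prescribed distance; pairs consisting of an oriented geodesic and an off-axis point at a prescribed signed distance), it suffices to verify each formula on one well-chosen representative.

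First I would record the canonical values of $L$ in the $\Bb$-orthogonal basis $\{H,E+F,E-F\}$ introduced earlier. The hyperbolic translation by $\ell$ along the upward-oriented imaginary axis is $\exp((\ell/2)H)$, so $L(R)=H$; the counter-clockwise rotation by angle $\nu$ around $i\in\H$ is $\exp((\nu/2)(E-F))$, so $L(S)=E-F$. In particular $L$ depends only on the oriented axis (respectively, the fixed point) and not on the translation length or the rotation angle, and it has $\Bb$-square $\pm 2$ as prescribed.

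For (a), normalize to $x_1=i$ and $x_2=e^d i$. Then $L(S_1)=E-F$ and, with $R_0=\mathrm{diag}(e^{d/2},e^{-d/2})$, a direct matrix computation gives $L(S_2)=\Ad_{R_0}(E-F)=e^d E-e^{-d}F$, whence $\tr(L(S_1)L(S_2))=-2\cosh(d)$ and $[L(S_1),L(S_2)]=2\sinh(d)\,H=2\sinh(d)L(R)$, where $R$ is precisely the hyperbolic translation along $\ora{x_1 x_2}$ by distance $d$. For (b) in the crossing case, put the intersection at $i$ with $\ora{\b_1}$ the imaginary axis and $\ora{\b_2}$ its rotate by angle $\d$ at $i$, so $L(R_2)=\Ad_{\exp((\d/2)(E-F))}(H)$; expanding using $[E-F,H]=-2(E+F)$ and $[E-F,E+F]=-2H$ yields $\Bb(H,L(R_2))=2\cos(\d)$. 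In the disjoint case, take the common perpendicular to be the imaginary axis; then $L(R_1)$ and $L(R_2)$ are $\Ad_{R_0}$-related with $d=d(\b_1,\b_2)$, and the same manipulation returns $2\cosh(d)$. For (c), put $\ora{\b}$ along the imaginary axis so that $L(R)=H$, and translate $x$ along $\ora{\b}$ onto the unit circle at signed distance $r=d(\ora{\b},x)$ from $i$; exponentiation of the adjoint action gives $L(S)=\Ad_{\exp((r/2)(E+F))}(E-F)=(E-F)\cosh(r)-H\sinh(r)$, so the trace pairing with $H$ returns $-2\sinh(r)$.

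The main obstacle is the careful bookkeeping of signs, specifically the orientation of $L(R)$ in the commutator identity of (a), the choice of compatible orientations for $\ora{\b_1}$ and $\ora{\b_2}$ in the disjoint case of (b), and the identification of the $\ora{\b}$-positive half-plane in (c). All of these are fixed by the conventions that $L$ for an elliptic element is an infinitesimal \emph{counter-clockwise} rotation at its fixed point and that the $\ora{\b}$-positive half-plane is the one inducing $\ora{\b}$ on its boundary; once one verifies in the chosen normal form that $L(R)=H$ corresponds to the upward translation of the imaginary axis, the remaining signs are forced by the equivariance of the whole setup.
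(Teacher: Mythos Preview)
Your approach is correct and is essentially what the paper means by ``simple considerations of hyperbolic geometry'': the paper does not give a proof of this lemma, merely citing Ratcliffe's textbook. Reducing each identity to a single normal form via $\Ad$-equivariance and computing explicitly in the basis $\{H,E+F,E-F\}$ is the natural verification, and your computations check out (one typo: $[E-F,E+F]=2H$, not $-2H$, but your final expression $\Bb(H,L(R_2))=2\cos\d$ is right). Your closing paragraph correctly isolates the only genuine content, namely the orientation bookkeeping in (a), the disjoint case of (b), and (c); these are indeed fixed by the stated conventions once one pins down which half-plane is $\ora{\b}$-positive.
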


In the flat case, we will only need the following simple result.

\begin{lemma}\label{lemma:trig-flat}
{\rm{(a)}}
Let $S_1,S_2\in\mathrm{SE}_2(\R)$ be elliptic elements,
namely $S_i(v)=N_i(v)+w_i$ with $1\neq N_i\in\mathrm{SO}_2(\R)$
and $w_i\in\R^2$ for $i=1,2$. Thus, $S_i$ has a fixed point
$x_i=(1-N_i)^{-1}w_i$
and the Euclidean distance $d(x_1,x_2)$ can be expressed as
\[
d(x_1,x_2)=\left\|(1-N_1)^{-1}w_1-(1-N_2)^{-1}w_2 \right\|
\]

{\rm{(b)}}
Given elliptic elements $S_1,S_2,S_3\in\mathrm{SE}_2(\R)$
with fixed points $x_1,x_2,x_3$, then the quantity
\[
x_1\wedge x_2 + x_2\wedge x_3+x_3\wedge x_1
\in\Lambda^2\R^2\cong\R
\]
is
positive (resp. negative, or zero) if and only if
$x_3$ lies in the positive half-plane with respect to the line
determined by $\ora{x_1 x_2}$ (resp. the negative half-plane,
or the three points are collinear).
\end{lemma}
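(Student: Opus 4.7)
The plan is to treat the two parts by direct computation, essentially reducing each to a one-line identity.

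For part (a), the starting observation is that the fixed point equation $S_i(x_i)=x_i$ becomes $(1-N_i)x_i=w_i$. So I need first to check that $1-N_i$ is invertible whenever $N_i\in\mathrm{SO}_2(\R)\setminus\{1\}$; this is immediate because such an $N_i$ is a nontrivial rotation by some angle $\vartheta\in(0,2\pi)$, whose eigenvalues $e^{\pm i\vartheta}$ are both different from $1$, so $1-N_i$ has nonzero determinant $2-2\cos\vartheta$. Inverting gives $x_i=(1-N_i)^{-1}w_i$, and the distance formula
\[
d(x_1,x_2)=\|x_1-x_2\|=\bigl\|(1-N_1)^{-1}w_1-(1-N_2)^{-1}w_2\bigr\|
\]
is then simply the definition of Euclidean distance.

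For part (b), the key identity is the bilinear expansion of the wedge product of the edge vectors of the triangle $x_1x_2x_3$. Using $x_i\wedge x_i=0$ and the antisymmetry of $\wedge$, I compute
\[
(x_2-x_1)\wedge(x_3-x_1)=x_2\wedge x_3-x_2\wedge x_1-x_1\wedge x_3
=x_1\wedge x_2+x_2\wedge x_3+x_3\wedge x_1,
\]
so the quantity in the statement equals $(x_2-x_1)\wedge(x_3-x_1)\in\Lambda^2\R^2\cong\R$ (with the identification $e_1\wedge e_2\mapsto 1$ induced by the orientation of the plane). This is the standard signed area of the parallelogram spanned by $x_2-x_1$ and $x_3-x_1$, which is positive, negative, or zero according as the ordered basis $(x_2-x_1,x_3-x_1)$ is positively oriented, negatively oriented, or degenerate. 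Under the convention fixed in the preceding Remark --- the $\ora{\beta}$-positive half-plane is the component of the complement of $\beta$ inducing the given orientation on its boundary, i.e.\ the component lying to the left of $\ora{\beta}$ in the standard orientation of the plane --- this sign is positive precisely when $x_3$ lies in the positive half-plane of the oriented line through $\ora{x_1x_2}$, negative in the opposite case, and zero when the three points are collinear.

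There is essentially no main obstacle: the only substantive point is a bookkeeping check that the sign convention for the wedge product matches the convention fixed for the ``positive half-plane'' in the earlier remark, so that the left/right orientation coming from the signed area agrees with the half-plane inducing the boundary orientation on $\ora{x_1x_2}$. Once these conventions are aligned, both claims follow immediately.
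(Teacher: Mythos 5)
Your computation is correct, and in fact the paper supplies no proof of this lemma at all --- it is stated as a ``simple result'' and left to the reader, so your argument simply fills in the expected elementary verification. Both steps (invertibility of $1-N_i$ via the eigenvalues $e^{\pm i\vartheta}\neq 1$, and the identity $x_1\wedge x_2+x_2\wedge x_3+x_3\wedge x_1=(x_2-x_1)\wedge(x_3-x_1)$ interpreted as a signed area, with the sign convention matched against the preceding Remark) are exactly what is needed.
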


Finally, the following explicit expression is needed
in the proof of Lemma~\ref{lemma:technical}.

\begin{lemma}\label{lemma:log}
Let $s,u\in\psl_2(\R)$ such that $s$ is elliptic or
hyperbolic and let $S=\exp(s)$. Then
\[
\log(\exp(tu)S)=s+t
(1-\Ad_S)^{-1}[u,s]
+t\frac{\Bb(u,s)}{\Bb(s,s)}s
+O(t^2)
\]
where $(1-\Ad_S)$ is here interpreted as
an automorphism of $s^{\perp}\subset\psl_2(\R)$.
\end{lemma}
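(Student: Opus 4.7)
The plan is to derive the first-order Taylor expansion of $X(t) := \log(\exp(tu) S)$ around $t = 0$ by differentiating the identity $\exp(X(t)) = \exp(tu) \exp(s)$ and applying the standard formula for the differential of the exponential map, then to separate the component along $s$ (annihilated by the natural linear equation one obtains) using $\Bb$-orthogonality.

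Set $Y := X'(0)$, so that $X(t) = s + tY + O(t^2)$. The differential formula $(d\exp)_X(\dot X) = \exp(X) \cdot \frac{1-e^{-\ad_X}}{\ad_X}(\dot X)$, applied at $t=0$, together with $\frac{d}{dt}\big|_{t=0} \exp(tu) S = u S$, yields
\[
\frac{1-e^{-\ad_s}}{\ad_s}(Y) = \Ad_{S^{-1}}(u).
\]
Multiplying both sides by $\ad_s$ (which commutes with $\Ad_{S^{-1}} = e^{-\ad_s}$) and then applying $\Ad_S$ gives the key linear equation
\[
(1-\Ad_S)(Y) = [u,s].
\]

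Now I would analyze this equation. Since $s$ is elliptic or hyperbolic, $\ad_s$ is semisimple with $\ker(\ad_s) = \R\, s$, and $\Bb$ restricts nondegenerately to both $\R\, s$ and its orthogonal complement $s^\perp$. The right-hand side lies in $s^\perp$, because $\Bb([u,s], s) = \Bb(u, [s,s]) = 0$ by $\ad$-invariance of $\Bb$. Hence $(1-\Ad_S)$ restricts to an automorphism of $s^\perp$, and the equation determines the $s^\perp$-component $Y_\perp$ of $Y$ uniquely as $(1-\Ad_S)^{-1}[u,s]$, but leaves the scalar $\lambda$ in $Y = Y_\perp + \lambda\, s$ undetermined by this equation alone.

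To pin down $\lambda$, I would return to the compact BCH identity $Y = \frac{\ad_s}{\Ad_S - 1}(u)$, understood as the convergent power series $\sum_{n\geq 0} \frac{B_n}{n!}(-\ad_s)^n(u)$ (derived from the operator equation above, or equivalently from Baker–Campbell–Hausdorff). Decomposing $u = u_\perp + \frac{\Bb(u,s)}{\Bb(s,s)} s$, the operator $\frac{\ad_s}{\Ad_S - 1}$ acts on $s^\perp$ as $(1-\Ad_S)^{-1}[\,\cdot\,, s]$, reproducing $Y_\perp = (1-\Ad_S)^{-1}[u,s]$; while on $\R\, s$ each positive power $\ad_s^n$ vanishes and the series collapses to its constant term $1$, in accordance with the limit $\lim_{x\to 0} x/(e^x - 1) = 1$. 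Combining these two contributions gives $\lambda = \Bb(u,s)/\Bb(s,s)$, yielding the asserted formula. The main subtlety — and the step requiring care — is precisely this last one: the symbol $(1-\Ad_S)^{-1}$ cannot literally be inverted on the kernel direction $\R\, s$, so the value of $\lambda$ must be read off from the genuine power-series (BCH) description of the exponential, whose behaviour on $\ker(\ad_s)$ is governed by the limit $x/(e^x-1) \to 1$.
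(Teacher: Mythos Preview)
Your proof is correct and takes a genuinely different route from the paper's. The paper works concretely inside $\mathfrak{gl}_2(\R)$: it uses that $s^2=cI$ (with $c\neq 0$) to decompose $\mathfrak{gl}_2(\R)=V\oplus W$ with $V=\mathrm{span}\{I,s\}=\ker(1-\Ad_S)$, splits into the two cases $u=s$ and $u\in s^\perp$, and then \emph{exponentiates the claimed right-hand side} term by term, checking that the result agrees with $\exp(tu)S$ to first order (for $u\in s^\perp$ this uses a telescoping identity $\sum_{j=0}^{h-1} s^j[u,s]s^{h-1-j}=[u,s^h]$ together with the fact that left/right multiplication by $s$ preserves $W$ and commutes with $\Ad_S$).

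Your argument instead differentiates $\exp(X(t))=\exp(tu)S$ via the standard formula for $d\exp$, reducing everything to the single linear equation $(1-\Ad_S)(Y)=[u,s]$, and then extracts the component along $s$ from the operator identity $Y=\dfrac{\ad_s}{e^{\ad_s}-1}(u)$ evaluated on $\ker(\ad_s)$. This is cleaner, avoids any case analysis, and generalises verbatim to any Lie algebra with a nondegenerate invariant form and $\ad_s$ semisimple with $(1-\Ad_S)$ invertible on $s^\perp$; the paper's argument, by contrast, leans on the matrix identity $s^2=cI$, which is specific to $\mathfrak{sl}_2$. A tiny cosmetic point: the Bernoulli expansion of $\dfrac{x}{e^x-1}$ is $\sum_{n\ge 0}\frac{B_n}{n!}x^n$, so the series should read $(\ad_s)^n$ rather than $(-\ad_s)^n$; this does not affect the argument, since all you use is that the $n=0$ term equals $1$.
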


\begin{proof}
Extend $\Bb$ to $\mathfrak{gl}_2(\R)$, so that $\Bb(x,y)=\tr(xy)$
for $x,y\in\mathfrak{gl}_2(\R)$, and consider $(1-\Ad_S)
\in\mathrm{End}(\mathfrak{gl}_2(\R))$.

Because $s$ is elliptic or hyperbolic, then $\displaystyle s^2=
\left(\begin{array}{cc}c & 0 \\ 0 & c \end{array} \right)$
with $c\neq 0$, and so
$\Bb(s,s)\neq 0$.
Hence, $V:=\mathrm{ker}(1-\Ad_S)=
\mathrm{span}\{1,s\}$ and $\mathfrak{gl}_2(\R)=V\oplus W$ is
an orthogonal decomposition, where
$W=\mathrm{Im}(1-\Ad_S)$.

Notice also that multiplying by $s$
(and so by $S$ or $S^{-1}$) on the left
or on the right is an automorphism of $\mathfrak{gl}_2(\R)$
that preserves $V$ and $W$.
Define $M_S:\mathfrak{gl}_2(\R)\lra \mathfrak{gl}_2(\R)$ as
\[
M_S(x+y):=(1-\Ad_{S})\Big|_W^{-1}(x)
\qquad \text{where $x\in W$ and $y\in V$}
\]
Clearly, the multiplication by $s$ (or by $S$ or $S^{-1}$)
commutes with $\Ad_S$, and so also with $M_S$.

Because the first-order term in $t$
in the equality we want to prove is also linear in $u$,
it is sufficient to
compute the exponential $E$ of the right hand side (up to $O(t^2)$)
in two different cases: $u=s$ and $u\in W$,
since $\psl_2(\R)=W\oplus \R s$.

For $u=s$, we have $[u,s]=0$ and so
\begin{align*}
E & = \exp\left(s+t\frac{\Bb(s,s)}{\Bb(s,s)}s\right) = \exp(s+ts) = \\
& = S \exp(ts)=S (1+ts+O(t^2))=S+ts S+O(t^2)
\end{align*}

If $u\in W$, then $(1-\Ad_S)(u),(1-\Ad_S)(uS)\in W$.
Hence,
\begin{align*}
E & = \exp(s+
t(1-\Ad_S)^{-1}[u,s])= \\
& = S+  t
\sum_{h\geq 1}\frac{1}{h!}
\sum_{j=0}^{h-1}
s^j
M_S^{-1}([u,s]) s^{h-1-j} +O(t^2)= \\
& = S +  t
\sum_{h\geq 1}\frac{1}{h!}
\sum_{j=0}^{h-1}
M_S^{-1}(s^j [u,s] s^{h-1-j}) +O(t^2)= \\
& = S +  t
\sum_{h\geq 1}
M_S^{-1}([u,s^h/h!]) +O(t^2)= \\
& = S +  t
M_S^{-1}(uS-Su) +O(t^2)= S+t M_S^{-1}(1-\Ad_S)(uS) +O(t^2)=\\
& = S+tuS+O(t^2).
\end{align*}
\end{proof}

%%%%%%%%%%%%%%%%%%%%%%%%%%%%%%%%%%%%%%%%%%%%%%%%%%%%%%%%%%%%%%%%%%%%%%%%%

\bibliographystyle{amsalpha}
\bibliography{AB3}

\providecommand{\bysame}{\leavevmode\hbox to3em{\hrulefill}\thinspace}
\providecommand{\MR}{\relax\ifhmode\unskip\space\fi MR }
% \MRhref is called by the amsart/book/proc definition of \MR.
\providecommand{\MRhref}[2]{%
  \href{http://www.ams.org/mathscinet-getitem?mr=#1}{#2}
}
\providecommand{\href}[2]{#2}
\begin{thebibliography}{McO93}

\bibitem[AB83]{atiyah-bott:yang-mills}
M.~F. Atiyah and R.~Bott, \emph{The {Y}ang-{M}ills equations over {R}iemann
  surfaces}, Philos. Trans. Roy. Soc. London Ser. A \textbf{308} (1983),
  no.~1505, 523--615. \MR{MR702806 (85k:14006)}

\bibitem[DP07]{dryden-parlier:cone}
Emily~B. Dryden and Hugo Parlier, \emph{Collars and partitions of hyperbolic
  cone-surfaces}, Geom. Dedicata \textbf{127} (2007), 139--149. \MR{MR2338522
  (2008m:53091)}

\bibitem[Gol84]{goldman:symplectic}
William~M. Goldman, \emph{The symplectic nature of fundamental groups of
  surfaces}, Adv. in Math. \textbf{54} (1984), no.~2, 200--225. \MR{MR762512
  (86i:32042)}

\bibitem[Gol86]{goldman:hamiltonian}
\bysame, \emph{Invariant functions on {L}ie groups and {H}amiltonian flows of
  surface group representations}, Invent. Math. \textbf{85} (1986), no.~2,
  263--302. \MR{MR846929 (87j:32069)}

\bibitem[Kar92]{karshon:algebraic}
Yael Karshon, \emph{An algebraic proof for the symplectic structure of moduli
  space}, Proc. Amer. Math. Soc. \textbf{116} (1992), no.~3, 591--605.
  \MR{MR1112494 (93a:14010)}

\bibitem[Lei02]{leibon:hyperbolic}
Gregory Leibon, \emph{Characterizing the {D}elaunay decompositions of compact
  hyperbolic surfaces}, Geom. Topol. \textbf{6} (2002), 361--391 (electronic).
  \MR{MR1914573 (2003c:52034)}

\bibitem[Luo93]{luo:holonomy}
Feng Luo, \emph{Monodromy groups of projective structures on punctured
  surfaces}, Invent. Math. \textbf{111} (1993), no.~3, 541--555. \MR{MR1202134
  (94b:32032)}

\bibitem[Luo06]{luo:rigidity}
\bysame, \emph{Rigidity of polyhedral surfaces}, pre-print, available on the
  ar{X}iv:\texttt{math/0612714}, 2006.

\bibitem[Luo07]{luo:boundary}
\bysame, \emph{On {T}eichm\"uller spaces of surfaces with boundary}, Duke Math.
  J. \textbf{139} (2007), no.~3, 463--482. \MR{MR2350850 (2008g:32024)}

\bibitem[Luo08]{luo:functionals}
\bysame, \emph{Variational principles on triangulated surfaces}, to appear in
  the {H}andbook of {G}eometric {A}nalysis, available on the
  ar{X}iv:\texttt{0803.4232}, 2008.

\bibitem[McO88]{mcowen:point-singularity}
Robert~C. McOwen, \emph{Point singularities and conformal metrics on {R}iemann
  surfaces}, Proc. Amer. Math. Soc. \textbf{103} (1988), no.~1, 222--224.
  \MR{MR938672 (89m:30089)}

\bibitem[McO93]{mcowen:prescribed}
\bysame, \emph{Prescribed curvature and singularities of conformal metrics on
  {R}iemann surfaces}, J. Math. Anal. Appl. \textbf{177} (1993), no.~1,
  287--298. \MR{MR1224820 (94f:53076)}

\bibitem[Mon06]{mondello:WP}
Gabriele Mondello, \emph{Triangulated {R}iemann surfaces with boundary and the
  {W}eil-{P}etersson {P}oisson structure}, to appear on {\it Journal of
  Differential Geometry}, available on the ar{X}iv:\texttt{math/0610698}, 2006.

\bibitem[Pen87]{penner:decorated}
R.~C. Penner, \emph{The decorated {T}eichm\"uller space of punctured surfaces},
  Comm. Math. Phys. \textbf{113} (1987), no.~2, 299--339. \MR{MR919235
  (89h:32044)}

\bibitem[Pen92]{penner:volumes}
\bysame, \emph{Weil-{P}etersson volumes}, J. Differential Geom. \textbf{35}
  (1992), no.~3, 559--608.

\bibitem[Rat06]{ratcliffe:foundations}
John~G. Ratcliffe, \emph{Foundations of hyperbolic manifolds}, second ed.,
  Graduate Texts in Mathematics, vol. 149, Springer, New York, 2006.
  \MR{MR2249478 (2007d:57029)}

\bibitem[Riv94]{rivin:euclidean}
Igor Rivin, \emph{Euclidean structures on simplicial surfaces and hyperbolic
  volume}, Ann. of Math. (2) \textbf{139} (1994), no.~3, 553--580.
  \MR{MR1283870 (96h:57010)}

\bibitem[ST05]{schumacher-trapani:variation}
Georg Schumacher and Stefano Trapani, \emph{Variation of cone metrics on
  {R}iemann surfaces}, J. Math. Anal. Appl. \textbf{311} (2005), no.~1,
  218--230.

\bibitem[ST08]{schumacher-trapani:cone}
\bysame, \emph{Weil-{P}etersson geometry for families of hyperbolic conical
  {R}iemann surfaces}, pre-print, available on the ar{X}iv:\texttt{0809.0058},
  2008.

\bibitem[Tro86]{troyanov:euclidiennes}
Marc Troyanov, \emph{Les surfaces euclidiennes \`a singularit\'es coniques},
  Enseign. Math. (2) \textbf{32} (1986), no.~1-2, 79--94. \MR{MR850552
  (87i:30079)}

\bibitem[Tro91]{troyanov:prescribing}
\bysame, \emph{Prescribing curvature on compact surfaces with conical
  singularities}, Trans. Amer. Math. Soc. \textbf{324} (1991), no.~2, 793--821.
  \MR{MR1005085 (91h:53059)}

\bibitem[Vee93]{veech:flat}
William~A. Veech, \emph{Flat surfaces}, Amer. J. Math. \textbf{115} (1993),
  no.~3, 589--689. \MR{MR1221838 (94g:30043)}

\bibitem[Wol83]{wolpert:symplectic}
Scott Wolpert, \emph{On the symplectic geometry of deformations of a hyperbolic
  surface}, Ann. of Math. (2) \textbf{117} (1983), no.~2, 207--234.
  \MR{MR690844 (85e:32028)}

\end{thebibliography}

\end{document}